\newtheorem{thm}{Theorem}[section]
\newtheorem{lemma}[thm]{Lemma}
\newtheorem{cor}[thm]{Corollary}
\newtheorem{prop}[thm]{Proposition}
\theoremstyle{definition}
\newtheorem{rmk}[thm]{Remark}
\newtheorem{dfn}[thm]{Definition}
\newtheorem{example}{Example}
\newcommand{\C}{{{\mathbb C}}}
\newcommand{\R}{{{\mathbb R}}}
\newcommand{\Z}{{{\mathbb Z}}}
\newcommand{\N}{{{\mathbb N}}}
\newcommand{\Sp}{{{\mathbb S}}}
\newcommand{\A}{{{\mathcal A}}}
\newcommand{\al}{{\alpha}}
\newcommand{\de}{{\delta}}
\newcommand{\eps}{{\varepsilon}}
\newcommand{\ka}{{\kappa}}
\newcommand{\ph}{{\varphi}}
\newcommand{\lm}{{\lambda}}
\newcommand{\gm}{{\gamma}}
\newcommand{\ow}[1]{\widetilde{ #1}}
\newcommand{\PCPS}{poly\-gonal complex with planar substructures}
\newcommand{\PCPSs}{poly\-gonal complexes with planar substructures}
\newcommand{\defPCPS}{}
\newcommand{\PCSS}{polygonal complex with spherical substructures}
\newcommand{\defPCSS}{}
\newcommand{\PCPSSS}{polygonal complex with planar or spherical substructures}
\newcommand{\Hm}[1]{\leavevmode{\marginpar{\tiny%
$\hbox to 0mm{\hspace*{-0.5mm}$\leftarrow$\hss}%
\vcenter{\vrule depth 0.1mm height 0.1mm width \the\marginparwidth}%
\hbox to 0mm{\hss$\rightarrow$\hspace*{-0.5mm}}$\\\relax\raggedright #1}}}
\date{}
\begin{document}
\title[Polygonal complexes with planar substructures]{Sectional
  curvature of polygonal complexes with planar substructures}

\author[M.~Keller]{Matthias Keller}
\author[N.~Peyerimhoff]{Norbert Peyerimhoff}
\author[F.~Pogorzelski]{Felix Pogorzelski}

\address[M.~Keller, F.~Pogorzelski]{Friedrich-Schiller-Universit\"at Jena,
Fakult\"at f\"ur Mathematik und Informatik, Mathematisches Institut, Germany}
\email{m.keller@uni-jena.de}
\email{felix.pogorzelski@uni-jena.de}

\address[N.~Peyerimhoff]{Department of Mathematical Sciences, Durham
  University, Science Laboratories South Road, Durham, DH1 3LE, UK}
\email{norbert.peyerimhoff@durham.ac.uk}

\begin{abstract}
  In this paper we introduce a class of polygonal complexes for which
  we can define a notion of sectional combinatorial curvature. These
  complexes can be viewed as generalizations of $2$-dimensional
  Euclidean and hyperbolic buildings. We focus on the case of
  non-positive and negative combinatorial curvature. As geometric
  results we obtain a Hadamard-Cartan type theorem, thinness of
  bigons, Gromov hyperbolicity and estimates for the Cheeger
  constant. We employ the latter to get spectral estimates, show
  discreteness of the spectrum in the sense of a Donnelly-Li type theorem   and present corresponding eigenvalue asymptotics. Moreover, we prove a
  unique continuation theorem for eigenfunctions and the solvability
  of the Dirichlet problem at infinity.
\end{abstract}

\maketitle
\tableofcontents


\section{Introduction}
Since recent years there is an increasing interest in studying curvature notions on discrete spaces. First of all there are various approaches to Ricci curvature  based on $L^{1}$-optimal transport on metric measure spaces starting with the work of Ollivier, \cite{O1,O2}.  These ideas were employed for graphs by various authors \cite{BJL, JL, LLY, LY} to study geometric and spectral questions. A related and very effective definition using $L^{2}$-optimal transport was introduced in \cite{EM}. Secondly, in \cite{JL,LY} there is the approach  of defining curvature bounds via curvature-dimension-inequalities using a calculus of Bakry-Emery based on Bochner's formula for Riemannian manifolds. Similar ideas were used \cite{BHLLMY} to prove a Li-Yau inequality for graphs. Finally let us mention the work on so called Ricci-flat graphs \cite{CY} and \cite{Fo} for another approach.
All these approaches have in common that they model some kind of Ricci curvature and that they are very useful to study lower curvature bounds.

Classically there is a curvature notion for planar polygonal complexes, called tessellating graphs, defined by an angular defect. These ideas go back as far as to works of Descartes \cite{F} and became mathematical folklore since then. Often there is no obvious relation of this curvature to the recent notions of Ricci curvature above. Despite the rather restrictive setting of planar graphs this curvature notion has proven to very effective to derive very strong spectral and geometric consequences of  upper curvature bounds \cite{BP1,BP2,Hi,Kel,Kel2,KLPS,Woe} which often relate to results to upper bounds on sectional curvature of Riemannian manifolds. (For consequences on lower bounds see, e.g., \cite{DM,HJL,NS,St,Z} as well.) Thus, it seems desirable to identify a class of more general complexes where on can define and introduce sectional curvature. This is the aim of this work.

The objects under investigation in this article are {\em polygonal complexes with
  planar substructures}\defPCPS. They are $2$-dimensional CW-complexes
equipped with a family of subcomplexes homeomorphic to the Euclidean
plane, which we call {\em apartments}, since they have certain
properties similar to the ones required for apartments in Euclidean
and hyperbolic buildings. The $2$-cells of a {\PCPS} can be viewed as
polygons and they are called faces and their closures are called
chambers. The geometry is based on this set of faces and their
neighboring structures. In particular, there is a combinatorial
distance function on the set of faces. Let us discuss the properties
of apartments in more detail. First of all, we require that there are
enough apartments, that is any two faces have to lie in at least one
apartment (condition (PCPS1) in Definition~\ref{dfn:PCPS} below). Sometimes,
we require the stronger condition (${\rm PSPS1}^*)$ that every
infinite geodesic ray of faces is included in an apartment. The
second crucial property is that all apartments are {\em convex}
(see condition (PCPS2)). These properties are also similar to the ones
satisfied by flats in symmetric spaces. The definition of {\PCPSs} comprises
both planar tessellations and all $2$-dimensional Euclidean and hyperbolic buildings.

We use the apartments of a {\PCPS} to define combinatorial curvatures on
them. Since these apartments could be seen in a vague sense as tangent
planes of the {\PCPS}, we call these curvatures {\em sectional   curvatures}. We introduce sectional curvatures on the faces and on the corners of an apartment (see Definition \ref{dfn:curv}), and they are invariants measuring the {\em local geometry} of the {\PCPS}.

The definition of {\PCPSs} and basic notions are introduced in Section~\ref{s:basics}. The results in this article are then given in Sections~\ref{s:geometry} and \ref{s:spectrum}. While most of these results are known for planar tessellations, it seems to us that several of these results were not known for Euclidean and hyperbolic
buildings. Next, we explain our results in more detail.

In Section~\ref{s:geometry} we discuss implications of negative and non-positive curvature to the {\em global and asymptotic geometry} of
a {\PCPS}. Many of the presented results have well-known counterparts in the smooth setting of Riemannian manifolds. Amongst our results, we present a combinatorial
Cartan-Hadamard theorem for non-positively curved {\PCPSs} (see Theorem~\ref{t:Cut_locus}) and we conclude Gromov hyperbolicity and positivity of the
Cheeger isoperimetric constant for negatively curved {\PCPSs} with certain
bounds on the vertex and face degree (see Theorems~\ref{t:Gromov} and
\ref{t:cheeger1}). These results are based on negativity or non-positivity of the sectional corner curvature. We also state an analogue of Myers
theorem in the case of strictly positive sectional face curvature (see
Theorem \ref{t:myers}).

Section \ref{s:spectrum} is devoted to spectral considerations of the
Laplacian. We discuss combinatorial/geometric criteria to guarantee
emptiness of the essential spectrum and to derive certain eigenvalue
asymptotics on {\PCPSs} (see Theorem \ref{t:DiscreteSpectrum}). We also
show that non-positive sectional corner curvature on {\PCPSs} implies absence of finitely supported eigenfunctions (see Theorem
\ref{t:uniqueContinuation}). Finally, we derive solvability of the Dirichlet problem at infinity for {\PCPSs} in the case of negative sectional corner curvature (see
Theorem \ref{thm:Dirichlet}).

As mentioned before, $2$-dimensional Euclidean and hyperbolic buildings provide large classes of examples of {\PCPSs}. While all these spaces have non-positive sectional face curvature, their corner curvature is not always necessarily non-positively curved. The main purpose of the final
Section \ref{s:examples} is to provide a self-contained short survey over these important classes.

\bigskip

{\bf Acknowledgements:} We like to thank Oliver Baues, Shiping Liu, Alex Lubotzky, Shahar Mozes, Dirk Sch\"utz and Alina Vdovina for many useful discussions. This
research was partially supported by the EPSRC Grant EP/K016687/1 (N.P.) and by the DFG (M.K.). Part of the work was done at the LMS-EPSRC Durham Symposium ``Graph Theory and Interactions.''
F.P. gratefully acknowledges the support from the German National Academic Foundation (Studienstiftung des deutschen Volkes). \\
This work would not be the same without the cheerful atmosphere created by
Lumen Keller during the initial stage of this collaboration.


\section{Basic definitions} \label{s:basics}

In this section we introduce {\em polygonal complexes with planar
  substructures} \defPCPS and define a notion of sectional curvature on
theses spaces. In order to do so we introduce polygonal complexes and
planar tessellations first. In the second subsection we explore some
basic consequences of the convexity assumption we impose. In the third
subsection we introduce a combinatorial sectional curvature notions for
these spaces.

\subsection{Polygonal complexes with planar
  substructures\defPCPS} \label{s:PCPS}

The following definition of polygonal complexes is found in \cite{BB1}.

\begin{dfn}[Polygonal complex] \label{dfn:polycomp}
  A \emph{polygonal complex} is a 2-dimensional CW-complex $X$ with
  the following properties:
  \begin{itemize}
  \item [(PC1)] The attaching maps of $X$ are homeomorphisms.
  \item [(PC2)] The intersection of any two closed cells of $X$ is
    either empty or exactly one closed cell.
  \end{itemize}
\end{dfn}

For a polygonal complex $X$ we denote the set of $0$-cells by $V$ and
call them \emph{vertices}, we denote the set of $1$-cells by $E$ and
call them the \emph{edges} and we denote the set of $2$-cells by $F$
and call them the \emph{faces}. We write $X=(V,E,F)$. Note that the
closures of all edges and faces in $X$ are necessarily compact (since
they are images of compact sets under the continuous characteristic
maps, see \cite[Appendix]{Hat}). We call two vertices $v$ and $w$
\emph{adjacent} or \emph{neighbors} if they are connected by an edge
in which case we write $v\sim w$. We call two different faces $f$ and
$g$ \emph{adjacent} or \emph{neighbors} if their closures intersect in
an edge and we write $f \sim g$. It is convenient to call the closure
of a face a {\em chamber}.

The \emph{degree} $|v| \in {\mathbb N}_0 \cup \{ \infty \}$ of a
vertex $v\in V$ is the number of vertices that are adjacent to
$v$. The \emph{degree} $|e| \in {\mathbb N}_0 \cup \{ \infty \}$ of an
edge $e\in E$ is the number of chambers containing $e$. The boundary
$\partial f$ of a face $f \in F$ is the set of all $1$-cells $e \in E$
being contained in the closure $\overline{f}$. Since in CW-complexes
every compact set can meet only finitely many cells (see
\cite[Prop. A.1]{Hat}), we have $| \partial f | = \# \partial f  < \infty$. The
\emph{degree} $|f|$ of a face $f\in F$ is the number of faces that are
adjacent to $f$ and, in contrast to $| \partial f |$, the face degree
$|f|$ can be infinite.

We call a (finite, infinite or bi-infinite) sequence
$\dots,f_{i-1},f_i,f_{i+1},\dots$ of pairwise distinct faces a
\emph{path} if successive faces are adjacent. The \emph{length} of the
path is one less than the number of components of the sequence. The
\emph{(combinatorial) distance} between two faces $f$ and $g$ is the length of the
shortest path connecting $f$ and $g$ and the distance is denoted by
$d(f,g)$. We call a (finite, infinite or bi-infinite) path
$(f_{k})$ of faces a \emph{geodesic} or a \emph{gallery}, if we
have for any two faces $f_{m}$ and $f_{n}$ in the path
$d(f_{m},f_{n})=|m-n|$, i.e., the distance between $f_{m}$ and $f_{n}$
is realized by the path.

We say a polygonal complex $X$ is \emph{planar} if $X$ is homeomorphic
to ${\mathbb R}^2$. We also say that a polygonal complex $X$ is
\emph{spherical} if $X$ is homeomorphic to the two-sphere ${\mathbb
  S}^2$.

Next we introduce the notion of a planar tessellation following
\cite{BP1,BP2}.

\begin{dfn}[Planar tessellation] \label{dfn:plantess}
  A polygonal complex $\Sigma=(V,E,F)$ is called a
  \emph{(planar/spherical) tessellation} if $\Sigma$ is
  planar/spherical and satisfies the following properties:
  \begin{itemize}
  \item [(T1)] Any edge is contained in precisely two different
    chambers.
  \item [(T2)] Any two different chambers are disjoint or have
    precisely either a vertex or a side in common.
  \item [(T3)] For any chamber the edges contained in it form a closed
    path without repeated vertices.
  \item [(T4)] Every vertex has finitely many neighbors.
  \end{itemize}
\end{dfn}

Note that property (T3) is already implied by (PC1) and (PC2). The
tessellations form the substructures which we will later need to
define sectional curvature. Now, we are in a position to introduce
polygonal complexes with planar substructures.

\begin{dfn} \label{dfn:PCPS}
  A \emph{polygonal complex with planar substructures} \defPCPS is a
  polygonal complex $X=(V,E,F)$, together with a set $\mathcal{A}$ of
  subcomplexes whose elements
  $\Sigma=(V_{\Sigma},E_{\Sigma},F_{\Sigma})$ are called
  \emph{apartments}, with the following properties:
  \begin{itemize}
  \item [(PCPS1)] For any two faces there is an apartment containing
    both of them.
  \item [(PCPS2)] The apartments are convex (i.e., for any $\Sigma \in
    \mathcal{A}$ any finite gallery $f_1,\dots,f_n$ with end-faces
    $f_1,f_n$ in $\Sigma$ stays completely in $\Sigma$).
  \item [(PCPS3)] The apartments are planar tessellations.
  \end{itemize}
\end{dfn}

Similarly, we introduce \emph{polygonal complexes with spherical
  substructures} \defPCSS by replacing property (PCPS3) in Definition
\ref{dfn:PCPS} by

\begin{itemize}
\item[(PCSS3)] The apartments are spherical tessellations.
\end{itemize}

Prominent examples of {\PCPSs} are $2$-dimensional Euclidean and hyperbolic buildings (see
Section \ref{s:examples} for the definition of a building as well as
several examples). Moreover, every planar tessellation is trivially a
{\PCPS}. For reasons of illustration, we like to introduce the following
example of a Euclidean building.

\begin{example} \label{ex:vdov} Let $X_0$ be the finite simplicial
  complex constructed from the  seven equilateral Euclidean
  triangles illustrated in Figure~\ref{f:triangle} by identifying sides with the same labels $x_i$.

\begin{figure}[h]
  \begin{center}
      \psfrag{x0}{$x_0$}
      \psfrag{x1}{$x_1$}
      \psfrag{x2}{$x_2$}
      \psfrag{x3}{$x_3$}
      \psfrag{x4}{$x_4$}
      \psfrag{x5}{$x_5$}
      \psfrag{x6}{$x_6$}
     \includegraphics[width=\textwidth]{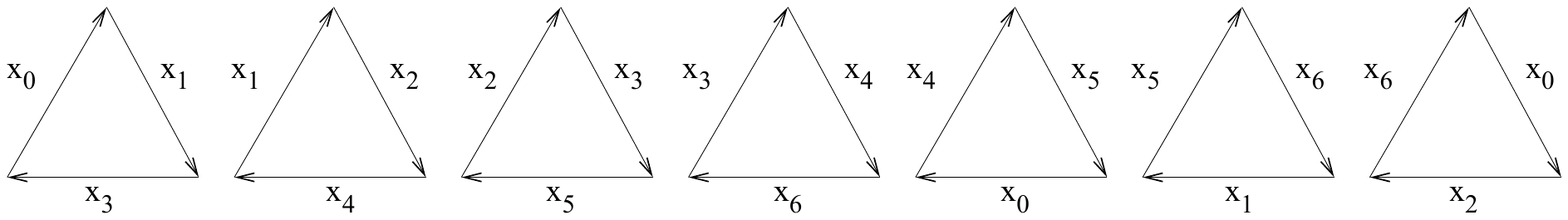}
  \end{center}
   \caption{Labeling scheme for the simplicial complex $X_0$}
   \label{f:triangle}
\end{figure}

Then $X_0$ has a single vertex which we denote by $p_0$, seven edges
and seven faces. Its fundamental group $\Gamma = \pi_1(\Pi_0,p_0)$ has
the following presentation
$$ \Gamma = \langle x_0,\dots,x_6 \mid x_i x_{i+1} x_{i+3} = {\rm id} \;
\text{for} \; i=0,1,\dots,6 \rangle $$ (where $i$ is taken modulo
$7$). Let $X = (V,E,F)$ be the universal covering of $X_0$ together
with the lifted triangulation. Then it follows from \cite[Theorem~
6.5]{BB2} that $X$ is a {\em thick Euclidean building of type
  $\widetilde A_2$} and every edge of $X$ belongs to precisely $3$
triangles. Therefore, $X$ is a {\PCPS}. The group of covering
transformations is isomorphic to $\Gamma$ and acts transitively on the
vertices of this building (see \cite{CMS}).
\end{example}

For some of our results we need the following slightly stronger
assumption than (PCPS1):

\begin{itemize}
\item [(${\rm PCPS1}^*$)] Every (one-sided) infinite geodesic is included in
  an apartment.
\end{itemize}

Condition (${\rm PCPS1}^*$) is satisfied for all $2$-dimensional Euclidean and
hyperbolic buildings with a {\em maximal apartment system} (see
Theorem \ref{thm:maxbuildPCPS} below).

Finally, let us mention the following important fact. To a polygonal
complex $X=(V,E,F)$ we can naturally associate a graph $G_X$ by letting $F$ be the vertex set of $G_X$ and by
defining the edges of that graph via the adjacency relation of the
corresponding faces. This 'duality' becomes important when we use
results for graphs in our context.


\subsection{Consequences of convexity}

The convexity assumption (PCPS2) is very important in our
considerations. In this subsection we collect some of the immediate
consequences.

\begin{lemma}\label{l:d} Let $X$ be a {\PCPS},  $\Sigma$ an apartment and let $d_{\Sigma}$ the combinatorial distance within the apartment.
  Then, for any two faces $f,g \in F_{\Sigma}$
  $$
  d(f,g)=d_{\Sigma}(f,g).
  $$
\end{lemma}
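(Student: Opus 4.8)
The plan is to show the two inequalities $d(f,g) \le d_\Sigma(f,g)$ and $d_\Sigma(f,g) \le d(f,g)$ separately. The first is immediate: any gallery connecting $f$ and $g$ inside $\Sigma$ is in particular a path in $X$, so the ambient distance $d(f,g)$ can only be shorter than or equal to $d_\Sigma(f,g)$.

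For the reverse inequality, the key tool is the convexity assumption (PCPS2). First I would take a shortest path $f = f_0, f_1, \dots, f_n = g$ in $X$ realizing $d(f,g) = n$; being a shortest path, it is automatically a gallery in the sense of the paper (its length equals the distance between its endpoints, and in fact between any two of its faces, since a subpath of a geodesic is a geodesic). Since $f, g \in F_\Sigma$ and (PCPS1) guarantees an apartment containing both, we may apply (PCPS2) to this finite gallery with end-faces in $\Sigma$: it must stay completely in $\Sigma$. Hence all the $f_i$ lie in $F_\Sigma$, so the same path witnesses $d_\Sigma(f,g) \le n = d(f,g)$.

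One subtlety I would want to address explicitly: (PCPS2) is stated for a gallery, so I should make sure that a shortest path in $X$ between $f$ and $g$ qualifies as a gallery, i.e.\ that $d(f_m,f_n) = |m-n|$ for all indices $m,n$, not just for the endpoints. This follows from the standard fact that any subpath of a shortest path is again a shortest path: if some $d(f_m,f_n) < |m-n|$, splicing in a shorter connection would contradict minimality of the original path, and one can also verify the faces remain pairwise distinct. The only mild obstacle is being careful that "path" in the definition requires pairwise distinct faces and that a shortest path indeed has this property — but a repeated face would again allow shortening, so this is routine. Once a shortest $X$-path is recognized as a gallery, convexity does all the work, and the two inequalities combine to give equality.
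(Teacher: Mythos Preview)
Your proof is correct and follows essentially the same approach as the paper: the easy inequality $d \le d_\Sigma$ is immediate, and for the reverse you take a minimizing path in $X$, observe it is a gallery, and invoke convexity (PCPS2) to force it into $\Sigma$. The paper's version is terser and omits your verification that a shortest path is a gallery; your aside about (PCPS1) is unnecessary here since $\Sigma$ is already given, but this does no harm.
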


\begin{proof} The inequality '$\le$' is clear. For the other direction
  '$\ge$' let $\gm=(f_{0},\ldots,f_{n})$ be a path connecting $f$ and
  $g$ minimizing $d(f,g)$. As $\gm$ is a geodesic with end-faces in
  $\Sigma$ it is completely contained in $\Sigma$ by (PCPS2). Hence,
  the statement follows.
\end{proof}

We say a subset $F_{0}$ of $F$ is connected if any two faces in
$F_{0}$ can be joined by a path in $F_{0}$.

\begin{lemma} \label{l:intersection}
  Let $X$ be a {\PCPS}. Let $\Sigma_{1}$ and $\Sigma_{2}$ be two
  apartments of $X$. Then the set $F_{\Sigma_{1}}\cap F_{\Sigma_{2}}$
  is connected.
\end{lemma}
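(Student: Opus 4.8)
The plan is to prove connectedness directly from the definition: given any two faces in $F_{\Sigma_1}\cap F_{\Sigma_2}$, exhibit a single path joining them that lies entirely in the intersection. The key observation is that one and the same distance-minimizing path in $X$ works for both apartments, because the convexity axiom (PCPS2) can be invoked for $\Sigma_1$ and for $\Sigma_2$ simultaneously.

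Concretely, fix $f,g\in F_{\Sigma_1}\cap F_{\Sigma_2}$; if $f=g$ there is nothing to prove, so assume $f\neq g$. Since $f$ and $g$ lie in the common apartment $\Sigma_1$, which is a planar tessellation and hence has connected face set, we have $d(f,g)<\infty$, so there is a path $\gamma=(f_0,\dots,f_n)$ in $X$ with $f_0=f$, $f_n=g$ and length $n=d(f,g)$. Any subpath of a shortest path is again shortest, so $\gamma$ is a geodesic, i.e. a finite gallery, with end-faces $f$ and $g$. Because $f,g\in F_{\Sigma_1}$, convexity (PCPS2) applied to $\Sigma_1$ forces $\gamma$ to stay completely inside $\Sigma_1$; because $f,g\in F_{\Sigma_2}$, the same argument applied to $\Sigma_2$ forces $\gamma$ to stay completely inside $\Sigma_2$. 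Hence every face of $\gamma$ belongs to $F_{\Sigma_1}\cap F_{\Sigma_2}$, and $\gamma$ is the desired path. Since $f$ and $g$ were arbitrary, $F_{\Sigma_1}\cap F_{\Sigma_2}$ is connected.

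The only point deserving a moment's care is the remark that a distance-realizing path between two faces automatically qualifies as a ``gallery/geodesic'' in the technical sense used in (PCPS2), which is immediate from the fact just noted that subpaths of shortest paths are shortest. Apart from that, there is essentially no obstacle: the statement is a direct consequence of the convexity assumption, in the same spirit as Lemma~\ref{l:d}. (Indeed one could equivalently package the argument through the chain $d_{\Sigma_1}(f,g)=d(f,g)=d_{\Sigma_2}(f,g)$ furnished by Lemma~\ref{l:d}, but applying (PCPS2) directly to a common shortest path is the cleanest route.)
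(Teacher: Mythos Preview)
Your proof is correct and follows essentially the same approach as the paper's: pick a geodesic between $f$ and $g$ and apply (PCPS2) to both $\Sigma_1$ and $\Sigma_2$ to conclude it lies in the intersection. The paper's version is just terser, omitting the routine verifications (finiteness of $d(f,g)$, shortest paths being geodesics) that you spell out.
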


\begin{proof} Let $f$ and $g$ be two faces in $F_{\Sigma_{1}} \cap
  F_{\Sigma_{2}}$. Then, by (PCPS2), every geodesic connecting $f$ and
  $g$ is completely contained in $\Sigma_{1}$ and $\Sigma_{2}$. Thus,
  $F_{\Sigma_{1}} \cap F_{\Sigma_{2}}$ is connected.
\end{proof}

For a fixed face $o \in F$ (called {\em center}), we define the
(combinatorial) spheres and balls about $o$ by
\begin{eqnarray*}
    S_{n} &=& S_{n}(o) = \{ f \in F\mid d(f,o)=n \} \quad \mbox{and} \\
    B_{n} &=& B_{n}(o) = \bigcup_{k=0}^{n} S_{k},
\end{eqnarray*}
for $n \ge 0$. For $f \in F$, we let the \emph{forward} and
\emph{backward degree} be given by
\begin{align*}
  |f|_{\pm} = | \{ g \in F \mid g \sim f,\, d(g,o) = d(f,o) \pm 1 \} |,
\end{align*}
and we call $g \in F$ with $g \sim f$ and $d(g,o) = d(f,o) + 1$
(respectively $d(g,o) = d(f,o) - 1$) a \emph{forward} (respectively
\emph{backward}) \emph{neighbor} of $f$. The next lemma shows that the
convexity condition (PCPS2) imposes a lot of structure of the distance spheres.

\begin{lemma} \label{l:neighbors} Let $X$ be a {\PCPS} and $o \in F$ be a
  center. Let $f \in F$ with $f\in S_{n}$ for some $n \ge 0$ and $f_{+}
  \in S_{n+1}$, $f_{0} \in S_{n}$, $f_{-} \in S_{n-1}$ be neighbors of
  $f$. Then,
  \begin{itemize}
  \item [(a)] Every face sharing the same edge with $f$ and $f_{+}$ is
    in $S_{n+1}$.
  \item [(b)] Every face sharing the same edge with $f$ and
    $f_{0}$ is in $S_{n}\cup S_{n-1}$.
  \item [(c)] Every face sharing the same edge with $f$ and
    $f_{-}$ is in $S_{n}$.
  \end{itemize}
\end{lemma}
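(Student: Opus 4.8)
The plan is to isolate a single combinatorial obstruction and deduce all three parts from it. The obstruction I want is: \emph{no three pairwise distinct faces $a,b,c$ can share a common edge $e$ and simultaneously satisfy $d(a,o)=d(b,o)=m$ and $d(c,o)=m+1$.} I would first record, for each of (a)--(c), the elementary distance constraint coming from the two adjacencies that $g$ satisfies. Writing $e$ for the edge with $\overline f\cap\overline{f_\bullet}=\overline e$ (where $f_\bullet$ is the neighbour of $f$ named in the relevant part), the hypothesis that $g$ contains $e$ gives $g\sim f$ and $g\sim f_\bullet$, and the triangle inequality confines $d(g,o)$ to $\{n,n+1\}$ in (a), to $\{n-1,n,n+1\}$ in (b), and to $\{n-1,n\}$ in (c); it thus suffices to exclude $d(g,o)=n$ in (a), $d(g,o)=n+1$ in (b), and $d(g,o)=n-1$ in (c). In each excluded case the three faces $f,f_\bullet,g$ are pairwise distinct and share $e$, with two of them on one sphere and the third on the next: the triple $(f,g,f_+)$ with $f,g\in S_n$, $f_+\in S_{n+1}$ in (a); $(f,f_0,g)$ with $f,f_0\in S_n$, $g\in S_{n+1}$ in (b); $(f_-,g,f)$ with $f_-,g\in S_{n-1}$, $f\in S_n$ in (c). So the obstruction applies, giving the desired restriction on $d(g,o)$.

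To prove the obstruction, I would use convexity to trap suitable geodesics in a common apartment. Since $c\sim a$ and $d(c,o)=d(a,o)+1$, appending $c$ to any geodesic $o=p_0,\dots,p_m=a$ yields a path $o=p_0,\dots,p_m,c$ of pairwise distinct faces that is again a geodesic (for each $i$, the reverse triangle inequality gives $d(p_i,c)\ge d(o,c)-d(o,p_i)=m+1-i$, while $d(p_i,c)\le d(p_i,p_m)+d(p_m,c)=m+1-i$); call it $\gamma_a$, and form $\gamma_b$ from $o$ to $c$ through $b$ in the same way. By (PCPS1) pick an apartment $\Sigma$ containing the faces $o$ and $c$. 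Both $\gamma_a$ and $\gamma_b$ are finite galleries with end-faces $o,c\in F_\Sigma$, so by convexity (PCPS2) they lie entirely in $\Sigma$; in particular $a,b,c\in F_\Sigma$. Since $\Sigma$ is a subcomplex containing the face $c$ and $e\in\partial c$, we get $e\in E_\Sigma$, and then $\overline a,\overline b,\overline c$ are three distinct chambers of the planar tessellation $\Sigma$ (by (PCPS3)) all containing $e$ --- contradicting (T1). Hence no such triple exists.

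The step I expect to be the crux is realizing that the two auxiliary geodesics must be aimed at the \emph{most distant} of the three faces --- $f_+$ in (a), $g$ in (b), $f$ in (c) --- and not at $f$ itself: it is exactly for that target that (PCPS1) supplies a common apartment, after which (PCPS2) drags both geodesics inside and the contradiction is read off from (T1) by counting chambers around the edge $e$. Everything else --- verifying the distance constraints, identifying which sphere each of the three faces lies on, and checking pairwise distinctness --- is routine bookkeeping, and I would not expect it to cause trouble.
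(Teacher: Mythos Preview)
Your proof is correct and follows essentially the same approach as the paper: constrain $d(g,o)$ by the triangle inequality, then in the bad case route two geodesics from $o$ through the two nearer faces to the most distant one, use (PCPS1)+(PCPS2) to trap all three faces in a single apartment, and contradict (T1). The only difference is packaging --- you abstract the common argument into a single ``two-near-one-far'' obstruction and apply it three times, whereas the paper repeats the argument in each part --- but the underlying idea and the key step (aiming both geodesics at the farthest face so that convexity applies) are identical.
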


\begin{proof}(a) Let $g \in F$ be such that $\partial g \cap \partial
  f \cap \partial f^+ \neq \emptyset$. Since $g$ is a neighbor of
  $f_{+}$, we have $d(o,f) \ge n$. Since $g$ is a neighbor of $f$, we
  have $d(o,f) \le n+1$. Therefore, we have $g \in S_{n}\cup
  S_{n+1}$. If $g$ was in $S_{n}$, then there are geodesics from the
  center $o$ over $g$ to $f_{+}$ and from $o$ over $f$ to $f_{+}$. By
  (PCPS2) both of these geodesics lie together in one
  apartment. Hence, $g$ lies in one apartment together with $f$,
  $f_{+}$ and $o$. Then, there is an edge contained in three faces
  $f,f_{+}$ and $g$ within one apartment $\Sigma$. This contradicts
  (T1) in the definition of a planar tessellation. But $\Sigma$ is a
  planar tessellation, by (PCPS3). Thus, $g\in S_{n+1}$.\\
  (b) Let $g \in F$ be such that $\partial g \cap \partial f
  \cap \partial f_{0} \neq \emptyset$. If $g$ was in $S_{n+1}$ then
  there were two geodesics from $o$ to $g$, one via $f$ and the other
  one via $f_{0}$. By a similar argument as in (a), the faces $g$,
  $f$, $f_{0}$ and $o$ lie in the same apartment. Again this is
  impossible by (T1) and (PCPS3).\\
  (c) Let $g \in F$ be such that $\partial g \cap \partial f
  \cap \partial f_{-} \neq \emptyset$. Clearly, $g$ is in $S_{n}\cup
  S_{n-1}$. If $g$ was in $S_{n}$ then, by similar arguments as in (a)
  and (b), the faces $g$, $f$, $f_{-}$ and $o$ lie in the same
  apartment which is again impossible by (T1) and (PCPS3).
\end{proof}


\subsection{Sectional curvature}

For an apartment $\Sigma=(V_{\Sigma},E_{\Sigma},F_{\Sigma})$, let
$|v|_{\Sigma}$ be the \emph{degree of $v$ in $\Sigma$} which is the
number of neighboring vertices in $V_{\Sigma}$. We notice that the
degree of an edge in $\Sigma$, i.e., the number of faces in
$F_{\Sigma}$ bounded by the edge, is always equal to $2$ by
(T1). Moreover, the \emph{degree} ${|f|}_{\Sigma}$ \emph{of a face}
$f$ \emph{in $\Sigma$} is equal to $| \partial f |$. Therefore,
$|f|_{\Sigma_{1}}=|f|_{\Sigma_{2}}$ for any two apartments
$\Sigma_{1}$, $\Sigma_{2}$ that contain $f$. Furthermore, for a {\PCPS}
$X$ and $\Sigma\in \A$ we let the set of corners of $X$ and of
$\Sigma$ be given by
\begin{align*}
    C = \{ (v,f) \in V \times F \mid v \in f \}, \quad
    C_\Sigma = \{ (v,f) \in V_{\Sigma} \times F_{\Sigma} \mid v \in f \}.
\end{align*}

\begin{dfn}[Sectional Curvature] \label{dfn:curv}
  Let $\Sigma$ be an apartment of a polygonal complex with planar
  substructures $X$. The \emph{sectional corner curvature}
  $\ka_{c}^{(\Sigma)}: C_\Sigma \to \R$ with respect to
  $\Sigma$ is given by
  \begin{align*}
    \ka_{c}^{(\Sigma)}(v,f) = \frac{1}{|v|_{\Sigma}} -
    \frac{1}{2} + \frac{1}{{|f|}_{\Sigma}},
  \end{align*}
  and the \emph{sectional face curvature} $\ka^{(\Sigma)}: F_\Sigma \to \R$
  with respect to $\Sigma$ is given as
  \begin{align*}
    \ka^{(\Sigma)}(f) = \sum_{(v,f) \in C_{\Sigma}} \ka_{c}^{(\Sigma)}(v,f) =
    1 - \frac{{|f|}_{\Sigma}}{2} + \sum_{v \in V_{\Sigma}, v \in \overline{f}}
    \frac{1}{|v|_{\Sigma}}.
  \end{align*}
\end{dfn}

The above combinatorial curvature notions are motivated by a
combinatorial version of the Gau\ss-Bonnet Theorem for closed
surfaces. We have for polygonal tessellations $\Sigma=
(V,E,F)$ of a closed surface $S$ (see \cite[Theorem~1.4]{BP1})
\begin{equation*} \label{eq:GB}
\chi(S) = \sum_{f \in F_{\Sigma}} \ka^{(\Sigma)}(f) 
\end{equation*}
where $\chi(S)$ is the Euler characteristic of $S$.
The sectional curvatures in Definition \ref{dfn:curv} are then the
intrinsic curvatures in the apartments $\Sigma$, and the apartments
$\Sigma$ can be understood as discrete analogues of specific tangent
planes. Note that curvature is a local concept and, for a given corner
or face, only information of the nearest neighboring faces in the
apartment are needed for its calculation.

Let us finally mention that the apartments in Example \ref{ex:vdov} are
regular tessellations of a Euclidean plane by equilateral triangles and that
this example has vanishing sectional face and corner curvature. This is a special
case covered by Proposition \ref{prop:curvEuclid} in Section~\ref{s:Euclidean buildings}, which presents curvature properties in the general situation of Euclidean buildings.



\section{Geometry} \label{s:geometry}

In this section we discuss implications of the curvature sign to the
global geometry of {\PCPSs}  like emptiness of cut-locus, Gromov
hyperbolicity and positivity of the Cheeger constant. Before we enter
into these topics, we first introduce some more useful combinatorial
notions.

We say $X$ is \emph{locally finite} if for all $v \in V$ and $e \in E$
\begin{align*}
    |v|<\infty\quad\mbox{and}\quad |e|<\infty.
\end{align*}
Since $|f| = \sum_{e \in \partial f} (|e|-1)$, we also have $|f| < \infty$ for locally finite polygonal complexes. For locally finite $X$, we define for a face $f \in F$
\begin{align*}
  m_{E}(f)&=\min_{e \in \partial f} (|e|-1),\qquad M_{E}(f)=\max_{e
    \in \partial f}(|e|-1)
\end{align*}
the minimal and maximal number of neighbors over one edge of $f$. The
\emph{minimal} and \emph{maximal thickness} of $X$ is then given by
\begin{align*}
  m_{E} &= \min_{f\in F} m_{E}(f),\quad
  M_{E} =\sup_{f\in F} M_{E}(f).
\end{align*}
The {\em maximal vertex} and {\em face degree} are defined by
\begin{align*}
  M_{V} = \sup_{v\in V}|v|, \qquad
  M_{F} = \sup_{f\in F}|f|.
\end{align*}
Note that we always have $M_{E} \le M_{F}$ and both can be infinite.

\subsection{Absence of cut-locus}

We first present a theorem which is an analogue of the Hadamard-Cartan
theorem from Riemannian manifolds. It is a rather immediate
consequence of convexity and \cite[Theorem~1]{BP2} for plane tessellating
graphs.

For a face $f \in F$ in a polygonal complex $X=(V,E,F)$ the \emph{cut
  locus} of $f$ is defined as
\begin{align*}
  \mathrm{Cut}(f)=\{g\in F\mid d(f,\cdot) \mbox{ attains a local
    maximum in }g\}.
\end{align*}
{\em Absence of cut locus} means that $\mathrm{Cut}(f)=\emptyset$ for
all $f\in F$ which means that every finite geodesic starting in $f$
can be continued to a infinite geodesic.

\begin{thm} \label{t:Cut_locus} Let $X=(V,E,F)$ be a {\PCPS} such that
  $\ka_{c}^{(\Sigma)} \leq 0$ for all apartments $\Sigma \in {\mathcal
    A}$. Then, $\mathrm{Cut}(f)=\emptyset$ for all $f\in F$. Moreover,
  every geodesic within an apartment $\Sigma$ can be continued to an
  infinite geodesic within $\Sigma$.
\end{thm}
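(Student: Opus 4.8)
The plan is to reduce the statement to the known planar case, Theorem~1 of \cite{BP2}, via the apartment structure and the convexity axiom (PCPS2). First I would observe that the hypothesis $\ka_c^{(\Sigma)}\le 0$ for every apartment means, in particular, that each apartment $\Sigma$, regarded as a planar tessellation with its intrinsic corner curvature, is a non-positively curved plane tessellating graph in the sense of \cite{BP2}. The cited theorem then guarantees that $\Sigma$ has no cut locus for its intrinsic distance $d_\Sigma$; equivalently, every finite geodesic of faces inside $\Sigma$ extends to an infinite one inside $\Sigma$. This is precisely the ``Moreover'' part of our statement, once we know that $d_\Sigma$ agrees with the ambient distance $d$ on $F_\Sigma$ — but that is exactly Lemma~\ref{l:d}. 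So the second sentence of the theorem follows almost immediately.

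For the first sentence, $\mathrm{Cut}(f)=\emptyset$ for all $f\in F$, I would argue as follows. Fix $f\in F$ and a finite geodesic $(f_0,\dots,f_n)$ with $f_0=f$; we must continue it by one more face (iterating then gives an infinite geodesic, which by definition of cut locus means $d(f,\cdot)$ has no local maximum). By (PCPS1) there is an apartment $\Sigma$ containing $f_0$ and $f_n$, and by (PCPS2) the whole geodesic $(f_0,\dots,f_n)$ lies in $\Sigma$. By Lemma~\ref{l:d}, $(f_0,\dots,f_n)$ is also a geodesic with respect to $d_\Sigma$. Applying \cite[Theorem~1]{BP2} inside the non-positively curved planar tessellation $\Sigma$, this geodesic extends to $(f_0,\dots,f_n,f_{n+1})$ with $f_{n+1}\in F_\Sigma$ and $d_\Sigma(f_0,f_{n+1})=n+1$. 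Invoking Lemma~\ref{l:d} once more, $d(f_0,f_{n+1})=n+1$, so the extended path is a geodesic in $X$ as well. Hence any finite geodesic from $f$ can be prolonged, so $d(f,\cdot)$ attains no local maximum, i.e.\ $\mathrm{Cut}(f)=\emptyset$.

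The only real subtlety — and the step I would be most careful about — is checking that the cited planar result \cite[Theorem~1]{BP2} applies verbatim to an apartment $\Sigma$: one must confirm that the curvature sign condition used there is implied by $\ka_c^{(\Sigma)}\le 0$ on all of $C_\Sigma$ (non-positive corner curvature is stronger than, and implies, non-positive face curvature $\ka^{(\Sigma)}(f)=\sum_{(v,f)\in C_\Sigma}\ka_c^{(\Sigma)}(v,f)\le 0$, which is the hypothesis typically used for extending geodesics in tessellations), and that $\Sigma$ genuinely satisfies all the hypotheses of a plane tessellating graph required in \cite{BP2} (this is guaranteed by (PCPS3) and Definition~\ref{dfn:plantess}). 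A secondary point worth a sentence is that the infinite geodesic produced inside $\Sigma$ is automatically an infinite geodesic in $X$, again by Lemma~\ref{l:d} applied to all pairs of its faces. No further estimates are needed; the argument is essentially a transfer principle from apartments to $X$ powered by convexity.
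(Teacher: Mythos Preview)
Your argument is correct and is essentially the paper's own proof: reduce to \cite[Theorem~1]{BP2} inside an apartment via (PCPS1), (PCPS2), and Lemma~\ref{l:d}. The paper phrases it marginally more directly---for arbitrary $g\in F$ it picks an apartment through $f$ and $g$ and produces a single forward neighbor $h\sim g$ with $d(f,h)=d(f,g)+1$, rather than tracking a whole geodesic---but the content is the same. One correction to your parenthetical: the hypothesis actually required in \cite[Theorem~1]{BP2} is non-positive \emph{corner} curvature (in the dual setting), not face curvature; the paper remarks immediately after the theorem that the conclusion can fail under the weaker face-curvature assumption, so your hedge there should be removed.
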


We conclude from Theorem \ref{t:Cut_locus} that emptiness of cut-locus
holds, e.g., for our Example \ref{ex:vdov} and Examples
\ref{ex:prodtrees}-\ref{ex:hag} (found in Section
\ref{s:examples}). Note also that the condition of non-positive
sectional {\em corner curvature} in Theorem \ref{t:Cut_locus} cannot
be weakened to non-positive sectional {\em face curvature} as Figure 2
in \cite{BP2} shows.

\begin{proof} Let $f\in F$. Choose $g\in F$ and let $\Sigma$ be an   apartment which contains both $f$ and $g$ (which exists by   (PCPS1)). By \cite[Theorem~1]{BP2} the cut locus of $f$ within   $\Sigma$ is empty that is there is a face $h\in F_{\Sigma}$ with $g\sim   h$ such that $d_{\Sigma}(f,h)=d_{\Sigma}(f,g)+1$. (Note that
  \cite[Theorem~1]{BP2} is formulated in the dual setting which,   however, can be carried over directly.) As $d=d_{\Sigma}$ on $\Sigma$, by   Lemma~\ref{l:d}, we conclude $g\not\in \mathrm{Cut}(f)$. Since this
  holds for all $g\in F$, we have  $\mathrm{Cut}(f)=\emptyset$. The
  second statement is an immediate consequence of
  \cite[Theorem~1]{BP2} and Lemma~\ref{l:d}.
\end{proof}

\begin{cor} Let $X=(V,E,F)$ be a {\PCPS} such that $\ka_{c}^{(\Sigma)}
  \leq 0$ for all $\Sigma \in {\mathcal A}$. Then, every face has at
  least one forward neighbor.
\end{cor}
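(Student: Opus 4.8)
The corollary is an immediate consequence of Theorem~\ref{t:Cut_locus}, so the plan is simply to unpack what absence of cut locus means at a single face. Fix a face $f \in F$ and let $o = f$ play the role of the center in the definitions of spheres $S_n$, balls $B_n$, and forward/backward degrees. A forward neighbor of a face $h$ (with respect to the center $f$) is precisely a neighbor $g \sim h$ with $d(f,g) = d(f,h)+1$, so the claim ``$h$ has at least one forward neighbor'' is the statement $|h|_+ \geq 1$ for every $h \in F$.

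First I would recall that $\mathrm{Cut}(f) = \emptyset$ by Theorem~\ref{t:Cut_locus}, since the hypothesis $\ka_c^{(\Sigma)} \leq 0$ for all $\Sigma \in \mathcal{A}$ is exactly the hypothesis of that theorem. By the characterization of absence of cut locus given in the text — namely that every finite geodesic starting in $f$ can be continued to an infinite geodesic — and the definition of $\mathrm{Cut}(f)$ as the set of faces where $d(f,\cdot)$ attains a local maximum, no face $h$ can be a local maximum of $d(f,\cdot)$. Concretely: if some $h$ had no forward neighbor, then every neighbor $g$ of $h$ would satisfy $d(f,g) \leq d(f,h)$, i.e., $d(f,\cdot)$ would attain a local maximum at $h$, putting $h \in \mathrm{Cut}(f)$ and contradicting $\mathrm{Cut}(f) = \emptyset$. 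Hence $h$ has a forward neighbor, and since $h \in F$ was arbitrary and $f$ was arbitrary, every face has at least one forward neighbor with respect to any center.

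There is essentially no obstacle here — the content is entirely in Theorem~\ref{t:Cut_locus}, and the corollary is a reformulation in the language of forward degrees. The only point requiring a sentence of care is matching the two phrasings of ``cut locus'': the definition via local maxima of $d(f,\cdot)$ versus the reformulation via continuation of geodesics. One should note that a face $h$ with $h \neq f$ has no forward neighbor if and only if all its neighbors lie in $S_{d(f,h)-1} \cup S_{d(f,h)}$ (using Lemma~\ref{l:neighbors} one in fact knows more, but it is not needed), which is exactly the condition that $d(f,\cdot)$ has a local maximum at $h$; and for $h = f$ the ball $B_0 = \{f\}$ is not all of $F$ unless $X$ is trivial, so $f$ itself has a neighbor, necessarily in $S_1$, hence a forward neighbor. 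This completes the argument.
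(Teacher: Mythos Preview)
Your proposal is correct and matches the paper's approach: the corollary is stated without a separate proof, as an immediate consequence of Theorem~\ref{t:Cut_locus}, and your argument is precisely the one-line unpacking of ``$\mathrm{Cut}(o) = \emptyset$'' into ``every face has a forward neighbor''. The only cosmetic point is that your choice to rename the center as $f$ and the generic face as $h$ slightly obscures the match with the paper's notation (where $o$ is the center and $f$ the generic face), but the mathematics is identical.
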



\subsection{Thinness of bigons}

In this subsection we show a useful hyperbolicity criterion.

Let $X=(V,E,F)$ be a polygonal complex. A \emph{bigon} is a pair of
geodesics $(f_{0},\ldots,f_{n})$ and $(g_{0},\ldots,g_{n})$ such that
$f_{0}=g_{0}$ and $f_{n}=g_{n}$. We say a bigon is {\em $\de$-thin}
for $\de \ge 0$, if $d(f_{k},g_{k}) \leq \de$ for all $k=0,\ldots,n$.

\begin{thm} \label{t:bigons} Let $X=(V,E,F)$ be a {\PCPS} such that
  $\ka_{c}^{(\Sigma)} < 0$ for all apartments $\Sigma \in {\mathcal
    A}$. Then, every bigon is $1$-thin.
\end{thm}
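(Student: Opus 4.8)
The plan is to reduce everything to the planar tessellation case, which is already handled in the literature, via the convexity axiom (PCPS2). Given a bigon consisting of two geodesics $\gamma=(f_0,\dots,f_n)$ and $\gamma'=(g_0,\dots,g_n)$ with $f_0=g_0$ and $f_n=g_n$, I would first invoke (PCPS1) to pick an apartment $\Sigma$ containing the two end-faces $f_0$ and $f_n$. By (PCPS2) both geodesics, having their end-faces in $\Sigma$, lie entirely in $\Sigma$; and by Lemma~\ref{l:d} the combinatorial distance $d$ agrees with the intrinsic distance $d_\Sigma$ on $F_\Sigma$. Hence the bigon is a bigon inside the single planar tessellation $\Sigma$, and its thinness in $X$ is the same as its thinness in $\Sigma$. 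So it suffices to prove $1$-thinness of bigons in a planar tessellation whose corner curvature is everywhere negative.

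For the planar case I would use the known Gauss--Bonnet / combinatorial curvature machinery for plane tessellating graphs (as in \cite{BP1,BP2}): the idea is that a geodesic bigon in $\Sigma$ bounds a finite region, and if the two geodesics ever strayed more than distance $1$ apart, this region would contain at least one interior face or interior vertex. Summing the sectional curvature over the enclosed region and comparing with the curvature contributed along the two geodesic boundary arcs (which, for geodesics, is controlled — geodesics "turn" by a bounded amount at each face, analogous to having nonnegative geodesic curvature in the smooth picture), a discrete Gauss--Bonnet argument forces the total enclosed curvature to be nonnegative. Since $\ka_c^{(\Sigma)}<0$ strictly everywhere, this is impossible once the region has positive combinatorial "area" (i.e.\ contains an interior corner), giving a contradiction. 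Therefore the region between the two geodesics is degenerate enough that $d(f_k,g_k)\le 1$ for all $k$.

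The main obstacle I expect is making the discrete Gauss--Bonnet bookkeeping along the boundary precise: one must show that the two geodesic arcs, traversed in opposite orientations around the enclosed disc, contribute a total boundary curvature that cannot be compensated by strictly negative interior curvature. This requires a careful analysis of how much a geodesic of faces can "bend" at a face of degree $|f|_\Sigma$ — essentially showing that at each face along a geodesic the turning is at most $\pi$ (half a full turn), which is exactly the combinatorial manifestation of geodesics having nonpositive geodesic curvature. The cleanest route is probably to cite the corresponding statement for plane tessellations directly (thinness of bigons, or the stronger "geodesic bigons bound no interior face", is part of the circle of results around \cite{BP2}) and then package the reduction above; alternatively one sets up the enclosed subcomplex $\Omega$ explicitly, applies Gauss--Bonnet to it, and uses $\ka^{(\Sigma)}(f)=\sum_{(v,f)\in C_\Sigma}\ka_c^{(\Sigma)}(v,f)<0$ together with the turning estimate to derive the contradiction. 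Either way, once the planar statement is in hand the apartment reduction is immediate.
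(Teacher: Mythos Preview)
Your proposal is correct and follows essentially the same approach as the paper: the paper's proof is exactly the apartment reduction you describe (use (PCPS1) to place $f_0=g_0$ and $f_n=g_n$ in a common apartment $\Sigma$, invoke (PCPS2) to trap both geodesics in $\Sigma$, then use Lemma~\ref{l:d} to identify $d$ with $d_\Sigma$), after which it simply cites \cite[Theorem~2]{BP2} for the planar $1$-thinness rather than reproving it via Gauss--Bonnet.
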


\begin{proof} Let $\gm_{1}=(f_{0},\ldots,f_{n})$ and
  $\gm_{2}=(g_{0},\ldots,g_{n})$ be a bigon and $\Sigma \in {\mathcal
    A}$ be an apartment that contains $f_{0}=g_{0}$ and
  $f_{n}=g_{n}$. By the convexity assumption (PCPS2) the apartment
  $\Sigma$ contains both geodesics $\gm_{1}$ and $\gm_{2}$ and,
  therefore, the pair $(\gm_{1},\gm_{2})$ is a bigon within
  $\Sigma$. By \cite[Theorem~2]{BP2} it follows that
  $d_{\Sigma}(f_{k},g_{k}) \leq 1$ for $k=0,\ldots,n$, and by
  Lemma~\ref{l:d} we conclude that $d(f_{k},g_{k}) \leq 1$ for
  $k=0,\ldots,n$.
\end{proof}

We have an immediate consequence.

\begin{cor} \label{c:bigon} Let $X=(V,E,F)$ be a {\PCPS} such that
  $\ka_{c}^{(\Sigma)} < 0$ for all $\Sigma \in {\mathcal A}$. Let
  $f_1,f_2 \in F$ with $d(f_1,f_2) = n$. Then we have for all
  $0 \le k \le n$:
  $$ |B_k(f_1) \cap B_{n-k}(f_2)| \le 2. $$
  In particular, if $f_1$ is considered as a center, $f_2$ has at most
  two backward neighbors.
\end{cor}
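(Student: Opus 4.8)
The plan is to deduce Corollary~\ref{c:bigon} directly from the thinness of bigons (Theorem~\ref{t:bigons}). Fix $f_1, f_2 \in F$ with $d(f_1,f_2)=n$ and fix $0 \le k \le n$. First I would observe that any face $h \in B_k(f_1) \cap B_{n-k}(f_2)$ satisfies $d(f_1,h) \le k$ and $d(h,f_2) \le n-k$, so by the triangle inequality $n = d(f_1,f_2) \le d(f_1,h)+d(h,f_2) \le n$, forcing $d(f_1,h)=k$ and $d(h,f_2)=n-k$. Hence $h$ lies on a geodesic from $f_1$ to $f_2$: concatenate a shortest path from $f_1$ to $h$ with a shortest path from $h$ to $f_2$; the length is exactly $n=d(f_1,f_2)$, so the concatenation is a geodesic. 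Thus $B_k(f_1)\cap B_{n-k}(f_2)$ is precisely the set of faces at distance $k$ from $f_1$ lying on some geodesic from $f_1$ to $f_2$.

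Next I would fix one reference geodesic $\gm = (f_1 = h_0, h_1, \dots, h_n = f_2)$ and let $h_k$ be its $k$-th face. Given any $h$ in our intersection set, there is a geodesic $\gm' = (f_1 = h_0', \dots, h_n' = f_2)$ with $h_k' = h$. The pair $(\gm, \gm')$ is a bigon, so by Theorem~\ref{t:bigons} it is $1$-thin; in particular $d(h, h_k) = d(h_k', h_k) \le 1$. Therefore every element of $B_k(f_1)\cap B_{n-k}(f_2)$ lies in the closed $1$-ball around the fixed face $h_k$, i.e. it is either $h_k$ itself or a neighbor of $h_k$.

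To upgrade "lies in $B_1(h_k)$" to the bound $2$, I would use the corner curvature hypothesis once more. Suppose $h, h' \in B_k(f_1) \cap B_{n-k}(f_2)$ are two distinct faces, both neighbors of $h_k$ (the case where one of them equals $h_k$ is handled identically, or folds into this one). Each of $h, h', h_k$ lies at distance $k$ from $f_1$ and at distance $n-k$ from $f_2$. Choosing $k$ with $1 \le k \le n-1$, pick backward neighbors (towards $f_1$) and forward neighbors (towards $f_2$); running the bigon argument between the geodesic through $h$ and the geodesic through $h'$ shows $d(h,h')\le 1$, so $h \sim h'$ or $h = h'$. Thus $h_k, h, h'$ would be three pairwise-adjacent faces all lying in a common apartment $\Sigma$ (take an apartment containing $f_1$ and $f_2$; by convexity (PCPS2) it contains all three geodesics). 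But in a planar tessellation the faces adjacent to a fixed face $h_k$ that are also pairwise adjacent are limited by (T1)–(T3): three mutually adjacent faces sharing a common structure around $h_k$ would force an edge of $h_k$ into more than two chambers, contradicting (T1). Hence at most two distinct faces can occur, giving $|B_k(f_1)\cap B_{n-k}(f_2)| \le 2$. The edge cases $k = 0$ and $k = n$ are immediate since the intersection is then $\{f_1\}$ or $\{f_2\}$.

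The main obstacle is the last step: thinness alone only confines the intersection to a $1$-ball, which a priori could contain many faces. The real work is a local combinatorial argument showing that three distinct faces of the intersection cannot coexist — this is where planarity of apartments (PCPS3) and the convexity (PCPS2) enter, essentially reproducing inside an apartment the fact that in a planar tessellation a face has at most two neighbors that continue a given geodesic. The final assertion ("$f_2$ has at most two backward neighbors", with $f_1$ as center) is the special case $k = n-1$: then $B_{n-1}(f_1) \cap B_1(f_2)$ contains $f_2$ together with all backward neighbors of $f_2$, and the bound $2$ leaves room for at most one face besides $f_2$ itself — so in fact one should state it as: the backward neighbors of $f_2$ number at most... one. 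Let me reconsider: $B_{n-1}(f_1)\cap B_1(f_2)$ consists of $f_2$ and its backward neighbors, and has cardinality $\le 2$, so there is at most one backward neighbor. I would therefore phrase the corollary's last line as asserting $f_2$ has at most two backward neighbors only if we instead take $k = n-1$ and note the bound, or more carefully track that $f_2$ itself need not be counted among "$B_{n-1}(f_1)$" — indeed $d(f_2, f_1) = n > n-1$, so $f_2 \notin B_{n-1}(f_1)$, and thus $B_{n-1}(f_1) \cap B_1(f_2)$ is exactly the set of backward neighbors of $f_2$, bounded by $2$. This confirms the stated conclusion.
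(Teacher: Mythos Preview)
Your reduction to Theorem~\ref{t:bigons} is correct up through the point where you conclude that every face in $B_k(f_1)\cap B_{n-k}(f_2)$ lies within combinatorial distance $1$ of the reference face $h_k$, and that any two such faces are at mutual distance $\le 1$. You also correctly observe that all of these faces lie in a single apartment $\Sigma$ containing $f_1$ and $f_2$ (by (PCPS2)), and your closing paragraph about the $k=n-1$ case is fine.

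The gap is in the ``three pairwise adjacent faces'' step. Your claim that three mutually adjacent faces $h_k,h,h'$ in a planar tessellation would force an edge of $h_k$ into more than two chambers, violating (T1), is false: three faces meeting around a common vertex of degree $3$ (as in the honeycomb) are pairwise adjacent with no edge in more than two chambers. One can try to rescue this by invoking the curvature hypothesis (under $\kappa_c^{(\Sigma)}<0$ every vertex in $\Sigma$ has degree $\ge 7$, ruling out the degree-$3$ meeting point), but you would still need to argue that three pairwise adjacent faces in a planar tessellation \emph{must} share a common vertex, and you have not done so. As written, this step does not go through.

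The paper avoids this local analysis entirely. It argues directly from planarity of $\Sigma$: if there were three distinct faces in the intersection, take three geodesics from $f_1$ to $f_2$ through them; in the plane one of these geodesics is enclosed by the other two, and then the \emph{outer} pair forms a bigon that is not $1$-thin (its $k$-th faces are separated by the middle one), contradicting Theorem~\ref{t:bigons}. This ``enclosure'' argument is the key idea you are missing; it exploits the global planar topology of the apartment rather than a local edge-count, and it is what converts $1$-thinness into the sharp bound~$2$.
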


\begin{proof}
  By convexity we can restrict our considerations on any apartment
  $\Sigma \in {\mathcal A}$ containing $f_1$ and $f_2$. Every $f \in
  B_k(f_1) \cap B_{n-k})(f_2)$ must obviously satisfy $d(f_1,f) = k$
  and $d(f,f_2)=n-k$. If there were $3$ faces in the intersection
  $B_k(f_1) \cap B_{n-k}(f_2) \subset F_\Sigma$, then there are $3$
  geodesics from $f_1$ to $f_2$ in $\Sigma$. Then, one of the three
  geodesics is enclosed by the other two in $\Sigma$ and the other two
  geodesics form a bigon. Then this bigon in not $1$-thin which
  contradicts the previous theorem.
\end{proof}

In fact, using the techniques of \cite{BP2} the last statement of Corollary \ref{c:bigon} holds even for
non-positive sectional corner curvature.

\begin{prop} \label{prop:bigon} Let $X=(V,E,F)$ be a {\PCPS} such that
  $\ka_{c}^{(\Sigma)} \le 0$ for all $\Sigma \in {\mathcal A}$ and $o
  \in F$ be a center. Then every face has at most two backward
  neighbors.
\end{prop}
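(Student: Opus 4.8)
The goal is to upgrade the conclusion of Corollary~\ref{c:bigon} (at most two backward neighbors) from strict negativity to non-positivity of the sectional corner curvature. By the convexity assumption (PCPS2) and Lemma~\ref{l:d}, everything reduces to a statement inside a single apartment: if $o$ and $f$ both lie in an apartment $\Sigma$, then any backward neighbor of $f$ (with respect to the center $o$) again lies in $\Sigma$, since it lies on a geodesic from $o$ to $f$. So it suffices to bound the number of backward neighbors of a face in a planar tessellation of non-positive corner curvature, and this is exactly the kind of statement proved with the techniques of \cite{BP2}. The plan is therefore to quote the relevant combinatorial fact from \cite{BP2} in its dual (face) formulation, just as was done in the proofs of Theorems~\ref{t:Cut_locus} and \ref{t:bigons}.

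First I would fix a center $o \in F$ and a face $f \in F$, and suppose for contradiction that $f$ has three distinct backward neighbors $g_1, g_2, g_3 \in S_{n-1}$, where $n = d(o,f)$. Pick an apartment $\Sigma$ containing $o$ and $f$ (by (PCPS1)); by (PCPS2) each $g_i$ lies in $\Sigma$, because concatenating a geodesic from $o$ to $g_i$ with the edge $g_i \sim f$ gives a geodesic from $o$ to $f$ whose faces must stay in $\Sigma$. Thus we have three geodesics from $o$ to $f$ inside the planar tessellation $\Sigma$, ending with the three distinct faces $g_1,g_2,g_3$ immediately before $f$. By the Jordan curve structure of a planar tessellation, one of these three can be taken to be "enclosed" by the other two, so that the outer two bound a bigon in $\Sigma$. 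Applying the bigon estimate of \cite[Theorem~2]{BP2} — which, for non-positively curved tessellations, forces bigons to have controlled width — combined with the fact that the three end faces $g_i$ are pairwise distinct yet all adjacent to $f$, yields a contradiction with (T1)/(T2) for $\Sigma$ (an edge or vertex shared by too many chambers), exactly in the spirit of the proof of Lemma~\ref{l:neighbors}. Since $d = d_\Sigma$ on $\Sigma$ by Lemma~\ref{l:d}, the bound transfers back to $X$.

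The main obstacle is locating and correctly invoking the precise combinatorial lemma from \cite{BP2}: Theorem~2 there gives $1$-thinness of bigons only under \emph{strict} negativity, so under mere non-positivity one needs the finer structural results from that paper's analysis of distance spheres (essentially, that a "flat strip" can occur but a genuine branching into three backward neighbors cannot). I would need to check that the dual translation of the appropriate statement in \cite{BP2} indeed gives "at most two backward neighbors" for non-positively curved planar tessellations, and cite it as such; the remark preceding this proposition already asserts this is what their techniques yield. The rest — the reduction to one apartment via (PCPS1), (PCPS2), Lemma~\ref{l:d}, and the planarity/Jordan-curve argument ruling out a third geodesic — is routine and parallels arguments already given for Theorem~\ref{t:Cut_locus}, Theorem~\ref{t:bigons}, and Corollary~\ref{c:bigon}.
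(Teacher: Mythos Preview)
Your reduction to a single apartment via (PCPS1), (PCPS2) and Lemma~\ref{l:d} is correct and is exactly what the paper does. The difference lies in what happens after the reduction. Your attempt to route the argument through bigons runs into precisely the obstacle you identify: \cite[Theorem~2]{BP2} needs strictly negative corner curvature, so the bigon/Jordan-curve argument of Corollary~\ref{c:bigon} does not go through under mere non-positivity, and your proposal remains incomplete at that point.

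The paper bypasses bigons entirely and instead invokes the structural analysis of distance balls in \cite{BP2}: under $\ka_c^{(\Sigma)} \le 0$, the set $B_n \cap \Sigma$ is an \emph{admissible polygon} in the sense of \cite[Def.~2.2]{BP2}, and \cite[Prop.~2.5]{BP2} then gives that $\partial f \cap \partial B_n$ is a connected path of length at most $2$. This immediately bounds the number of backward neighbors of $f$ by two. So the ``finer structural result'' you were looking for is exactly \cite[Prop.~2.5]{BP2}; once you cite it, the proof is one line, and no contradiction argument or Jordan-curve reasoning is needed.
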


\begin{proof}
  This is a consequence of the results in \cite{BP2}. Let $f \in F$.
  Let $\Sigma \in {\mathcal A}$ be an apartment containing $o$ and
  $f$. Then the ball $B_n \cap \Sigma$ is an admissible polygon in
  $\Sigma$ in the sense of \cite[Def. 2.2]{BP2} and $\partial f\cap\partial B_n$ is a connected path of length $\le 2$, by
  \cite[Prop. 2.5]{BP2}. This shows that $f$ can have at most two
  backward neighbors.
\end{proof}


\subsection{Gromov hyperbolicity}

Recall from the end of Subsection \ref{s:PCPS} that every polygonal
complex $X=(V,E,F)$ can also be viewed as a metric space via the
associated graph $G_X$ and its natural combinatorial
distance function. Geodesics $(f_i) \subset F$ in $X$ correspond
then to (vertex) geodesics in $G_X$. With this understanding, we
call the polygonal complex $(X,d)$ Gromov hyperbolic if there exists
$\de > 0$ such that any side of any geodesic triangle in $G_X$ lies in the $\delta$-neighborhood of the union of the two
other sides of the triangle. We show Gromov hyperbolicity of a {\PCPS}
$(X,d)$ with negative sectional corner curvature as well as properties
of the Gromov boundary $X(\infty)$ under the additional boundedness
assumption of the vertex and face degree. For details on the Gromov
boundary (and the Gromov product used to define it) we refer to
\cite[Chpt. III.H]{BH}.

\begin{thm} \label{t:Gromov} Let $X$ be a {\PCPS} with $M_{V}, M_{F} <
  \infty$ and $\ka_{c}^{(\Sigma)} < 0$ for all $\Sigma \in {\mathcal
    A}$. Then, $(X,d)$ and all its apartments are Gromov hyperbolic
  spaces. If additionally (${\rm PCPS1}^*$) is satisfied then every
  connected component of the Gromov boundary $X(\infty)$ contains the
  Gromov boundary of an apartment which is homeomorphic to the unit circle $S^1$.
\end{thm}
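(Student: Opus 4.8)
The plan is to prove the two assertions of Theorem~\ref{t:Gromov} separately, first Gromov hyperbolicity and then the statement about the Gromov boundary. For hyperbolicity, the most economical route is to invoke Theorem~\ref{t:bigons}: since $\ka_c^{(\Sigma)}<0$ on every apartment, every bigon in $X$ is $1$-thin. I would first observe that a bigon-thinness bound already gives thinness of geodesic triangles in any geodesic metric space, but here we need to be a bit careful because $X$ is not geodesic in the strict sense (it is the metric graph $G_X$ with integer distances). So I would work in $G_X$, where geodesics between faces exist, and use the standard fact (see \cite[Chpt.~III.H]{BH}) that uniform thinness of bigons together with the hypothesis $M_V, M_F<\infty$ — which makes $G_X$ locally finite with bounded vertex degree, hence a genuine geodesic space after taking the geometric realization — implies uniform thinness of all geodesic triangles, and therefore Gromov hyperbolicity with some $\de>0$. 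The same argument applied inside a single apartment $\Sigma$ (which is itself a planar tessellation, and for which \cite{BP2} directly gives bigon-thinness) shows each apartment is Gromov hyperbolic.

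For the boundary statement, assume additionally $({\rm PCPS1}^*)$. The idea is: pick a point $\xi \in X(\infty)$ and a geodesic ray $(f_k)_{k\ge0}$ in $X$ representing $\xi$; by $({\rm PCPS1}^*)$ this ray lies in some apartment $\Sigma$. I would then argue that the inclusion $\Sigma \hookrightarrow X$ is a quasi-isometric embedding — in fact, by Lemma~\ref{l:d} it is an isometric embedding on the face sets, and since $\Sigma$ is convex (PCPS2) geodesics of $X$ between faces of $\Sigma$ stay in $\Sigma$, so the inclusion is isometric. An isometric embedding between Gromov hyperbolic spaces induces a topological embedding of boundaries $\Sigma(\infty) \hookrightarrow X(\infty)$, and its image is closed. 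Moreover $\Sigma$, being a planar tessellation that is Gromov hyperbolic, has $\Sigma(\infty)$ homeomorphic to $S^1$ (this is the classical picture: the $1$-skeleton of a negatively curved planar tessellation is quasi-isometric to $\mathbb{H}^2$, or one argues directly that the boundary of a one-ended hyperbolic planar complex is a circle). Since $\xi \in \Sigma(\infty) \subset X(\infty)$ and $\Sigma(\infty) \cong S^1$ is connected, the connected component of $X(\infty)$ containing $\xi$ contains all of $\Sigma(\infty)$; as $\xi$ was arbitrary this proves the claim.

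The main obstacle, I expect, is the assertion that $\Sigma(\infty) \cong S^1$ and that the induced map on boundaries is a well-defined embedding with closed image. The first point requires knowing that a negatively curved planar tessellation is one-ended and that its Gromov boundary is a circle; this should follow from the planarity (homeomorphism to $\R^2$) together with hyperbolicity — one can use that $\Sigma$ deformation retracts onto a tree-like structure is false, rather one invokes that a hyperbolic group-like planar complex has circle boundary, or argues via the Jordan curve theorem that any two ends would disconnect the plane. The cleanest citation is to the theory of planar tessellations with negative curvature in \cite{BP1,BP2} or general results on boundaries of planar hyperbolic spaces. The second point — that an isometric (or quasi-isometric) embedding $Y \hookrightarrow X$ of proper geodesic hyperbolic spaces induces a topological embedding $Y(\infty)\hookrightarrow X(\infty)$ with closed image — is standard and can be cited from \cite[Chpt.~III.H]{BH}; one must only check properness, which follows from local finiteness of $G_X$ under $M_V,M_F<\infty$.

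Thus the proof structure is: (1) bigon-thinness $\Rightarrow$ hyperbolicity of $X$ and of each $\Sigma$, using $M_V,M_F<\infty$ for properness; (2) under $({\rm PCPS1}^*)$, every boundary point lies on a ray contained in an apartment; (3) convexity makes $\Sigma \hookrightarrow X$ isometric on faces, hence a boundary embedding with closed connected image $\Sigma(\infty)\cong S^1$; (4) conclude that the component of $X(\infty)$ through the chosen point contains this circle.
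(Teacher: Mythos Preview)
Your overall architecture matches the paper's, but there is a genuine gap in step (1). The assertion that ``a bigon-thinness bound already gives thinness of geodesic triangles in any geodesic metric space'' is false: in the Euclidean plane (or any uniquely geodesic space) every bigon is $0$-thin, yet triangles are arbitrarily fat. So thinness of geodesic bigons alone does \emph{not} imply Gromov hyperbolicity, and \cite[Chpt.~III.H]{BH} does not contain such a statement. What is true, and what the paper actually uses, is Pomroy's theorem (stated in the paper as Theorem~\ref{thm:pomroy}, with a proof in \cite[Appendix]{ChN}; the special case of Cayley graphs is Papasoglu \cite{Pa}): if all $\rho$-bigons --- pairs of $(1,\rho)$-quasi-geodesics with common endpoints --- are uniformly thin for some $\rho>0$, then the space is hyperbolic. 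The paper then observes that in the metric graph $G_X$ with integer edge-lengths, a $(1,\rho)$-quasi-geodesic with $\rho<1/2$ is already a genuine geodesic, so Theorem~\ref{t:bigons} ($1$-thinness of geodesic bigons) feeds directly into Pomroy's hypothesis. You should replace your appeal to \cite{BH} with this chain; it is not a mere citation fix, since the false general claim as written would invalidate the argument.

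For the boundary statement your approach is essentially the paper's, though you elaborate more than necessary. The paper simply notes that $M_F<\infty$ makes $G_X$ proper so that the geodesic and Gromov boundaries coincide, then uses (${\rm PCPS1}^*$) to place a representing ray inside some $\Sigma$, and cites \cite[Cor.~5]{BP2} directly for $\Sigma(\infty)\cong S^1$. Your discussion of the isometric embedding $\Sigma\hookrightarrow X$ via Lemma~\ref{l:d} and the induced boundary map is correct and a bit more explicit than the paper, but the ``main obstacle'' you flag --- that $\Sigma(\infty)\cong S^1$ --- is exactly what \cite[Cor.~5]{BP2} provides, so no additional Jordan-curve or one-endedness argument is needed.
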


By the theorem in the section above all bigons in $(X,d)$ are
$1$-thin. For Cayley graphs, \cite[Theorem~1.4]{Pa} tells us that the
statement of the theorem above is true. For general $G_X$, we need the
following generalization given in the unpublished Master's
dissertation of Pomroy (a proof of it can be found in
\cite[Appendix]{ChN}). Here, a \emph{$\rho$-bigon} in a geodesic
metric space with metric $d$ is a pair of $(1,\rho)$ quasi-geodesics
$\gamma_1,\gamma_2$ with the same end points, i.e.,
$$ |t-t'| - \rho \le d(\gamma_i(t),\gamma_i(t')) \le |t-t'| + \rho,  $$
for all $ t,t'$.

\begin{thm}[Pomroy] \label{thm:pomroy}
  If for a geodesic metric space there are $\eps,\rho > 0$ such   that $\rho$-bigons are uniformly $\eps$-thin, then the space is
  Gromov hyperbolic.
\end{thm}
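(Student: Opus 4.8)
The plan is to deduce Theorem~\ref{thm:pomroy} from known results by showing that the hypothesis (uniform $\eps$-thinness of $\rho$-bigons) implies the Rips thin-triangles condition, which is one of the standard equivalent characterisations of Gromov hyperbolicity for geodesic metric spaces (see \cite[Chpt.~III.H]{BH}). Since the theorem is attributed to Pomroy with a published proof in \cite[Appendix]{ChN}, I would not reprove it from scratch; rather I would indicate the mechanism and cite those sources.

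First I would recall the setup: fix a geodesic triangle with sides $\si_1,\si_2,\si_3$ and vertices $p_1,p_2,p_3$, parametrised by arclength. The goal is to find a uniform $\de$ (depending only on $\eps,\rho$) so that each side lies in the $\de$-neighbourhood of the union of the other two. The classical strategy is to build a comparison ``tripod'': let $c_i$ be the internal point on $\si_i$ determined by the Gromov products (the insize points), so that the three insize points come in pairs at equal distance along the sides. One then argues that the subsegment of one side between two insize points, together with the corresponding subsegments of the other two sides, is (up to bounded error) a $\rho'$-bigon for a suitably enlarged $\rho'$ — concatenating a geodesic subsegment with a geodesic subsegment gives a path that is an $O(\rho)$-quasigeodesic once one controls the ``gap'' at the vertex using the insize condition. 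Applying the $\eps$-thinness hypothesis to this bigon bounds the distance between the two relevant subsegments, hence bounds how far the side travels from the union of the other two.

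The key steps in order would be: (i) set up the insize points and record the elementary metric identities they satisfy; (ii) show that a geodesic subsegment concatenated with another geodesic subsegment, meeting at a point that is within bounded distance of both, forms a $(1,\rho')$-quasigeodesic with $\rho'$ controlled by $\rho$ and the insize data; (iii) invoke the hypothesis to get that $\rho'$-bigons are $\eps'$-thin (note $\rho$-bigon thinness upgrades to $\rho'$-bigon thinness for larger $\rho'$ only with a correspondingly larger thinness constant — this bookkeeping must be done carefully, and is exactly the content of Pomroy's argument); (iv) combine (ii) and (iii) to bound the distance from each point of $\si_i$ to $\si_j\cup\si_k$ by a constant $\de=\de(\eps,\rho)$; (v) conclude hyperbolicity via the Rips condition \cite[Chpt.~III.H]{BH}. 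For the application in this paper only the case $\rho$ arbitrarily small and $\eps$ small (here $\rho$ can be taken as small as one likes and $\eps=1$, coming from Theorem~\ref{t:bigons}) is needed, which simplifies the constants.

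The main obstacle I anticipate is step (iii): a priori the hypothesis only controls genuine $\rho$-bigons, whereas the paths arising from triangle sides are only $(1,\rho')$-quasigeodesics with $\rho'$ strictly larger than $\rho$, and one must either (a) show directly that $(1,\rho')$-quasigeodesic bigons are uniformly thin given that $(1,\rho)$-quasigeodesic bigons are, or (b) first straighten each $(1,\rho')$-quasigeodesic to a genuine geodesic and control the Hausdorff distance of the straightening in terms of $\rho'$ and the thinness constant — a bootstrapping/fellow-traveller argument. This is the technical heart of Pomroy's dissertation; in the write-up I would simply refer the reader to \cite[Appendix]{ChN} for the details rather than reproduce this bootstrap, and use the statement as a black box for Theorem~\ref{t:Gromov}.
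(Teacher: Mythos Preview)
The paper does not prove this theorem at all: it is stated as a quoted result of Pomroy (from an unpublished Master's dissertation) with the proof referenced to \cite[Appendix]{ChN}, and is then used as a black box in the proof of Theorem~\ref{t:Gromov}. Your proposal to cite \cite[Appendix]{ChN} and use the statement as a black box is therefore exactly what the paper does; your additional sketch of the tripod/bootstrap mechanism is more than the paper provides, but is not needed here.
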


\begin{proof}[Proof of Theorem~\ref{t:Gromov}]
By Theorem~\ref{t:bigons} all bigons in $(X,d)$ are $1$-thin. The same holds true within all
apartments. For $G_X$ to satisfy the requirement of a {\em geodesic} metric space, we view it as a {\em metric graph} with all its edge lengths equal to one.
Choose $\rho < 1/2$ and $\eps=1$, we can then conclude from
Theorems~\ref{t:bigons} and \ref{thm:pomroy} that $(X,d)$ and all its
apartments are Gromov hyperbolic.

Next we prove the rest of the theorem assuming (${\rm PCPS1}^*$). From
$M_{F} < \infty$ we conclude that $G_X$ is a {\em proper} (i.e., closed balls in $G_{X}$ of finite radius are compact)
hyperbolic geodesic space and, therefore, the geodesic boundary
(defined via equivalence classes of geodesic rays, where rays are
equivalent iff they stay in bounded distance to each other) and the
Gromov boundary coincide (see, e.g., \cite[Lm. III.H.3.1]{BH}) and we
can think of any boundary point $\xi \in X(\infty)$ as being
represented by a geodesic ray $(f_i)$ of faces in $ F$. Using (${\rm
  PCPS1}^*$), there is an apartment $\Sigma \in {\mathcal A}$ such
that all the faces $f_i$ are in $ F_\Sigma$ and $\xi \in \Sigma(\infty) \subset
X(\infty)$. We also know from \cite[Cor. 5]{BP2} that $\Sigma(\infty)$
is homeomorphic to $S^1$, finishing the proof.

\end{proof}

It is easy to see that the Euclidean buildings in Example~\ref{ex:vdov} and \ref{ex:prodtrees} are not Gromov
hyperbolic. Theorem~\ref{t:Gromov} is not applicable since these examples have vanishing sectional corner curvature.


\subsection{Cheeger isoperimetric constants}

In this subsection we prove how negative curvature   effect positivity of the Cheeger isoperimetric constant.

Let $X=(V,E,F)$ be a {\em locally finite} polygonal complex. We
consider the following Cheeger constant which is very useful for
spectral estimates. For $H \subseteq F$, we define
\begin{align*}
  \al_{H}=\inf_{K\subseteq H\,\mbox{{\scriptsize finite}}}\frac{|\partial K|}{\mathrm{vol}(K)}
\end{align*}
with
\begin{align*}
\partial K=\{(f,g) \in K \times F\setminus K \mid f\sim g\}
\end{align*}
and
\begin{align*}
  \mathrm{vol}(K)=\sum_{f\in K} |f|.
\end{align*}
Note that $\alpha_H \le 1$. We set $ \al=\al_{F}$.

Firstly, we present a result that shows positivity of the Cheeger
isoperimetric constant for negative sectional corner curvature under
the additional assumption of bounded geometry. This result is a
consequence of a general result of Cao \cite{C}, which also holds in
the smooth setting of Riemannian manifolds. Secondly, we give more
explicit estimates for the Cheeger constant.

\begin{thm} \label{t:cheeger1} Let $X=(V,E,F)$ be a {\PCPS} such that
  $\ka_{c}^{(\Sigma)}<0$ for all $\Sigma \in {\mathcal A}$. Assume
  that $X$ additionally satisfies (${\it PCPS1}^*$) and $M_{V},
  M_{F}<\infty$. Then, $\al>0$.
\end{thm}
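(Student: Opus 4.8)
The plan is to deduce Theorem~\ref{t:cheeger1} from a general result of Cao~\cite{C}, which asserts that a proper geodesic metric space of bounded geometry that is Gromov hyperbolic, non-elementary, and admits a quasi-pole has positive Cheeger isoperimetric constant. First I would pass to the metric side: view $X$ through its associated graph $G_X$ (vertex set $F$, edges given by the adjacency relation $\sim$ of faces), regarded as a metric graph with all edge lengths equal to $1$. Then the path metric on $G_X$ restricts to $d$ on $F$, geodesics of faces are exactly geodesics in $G_X$, and $\deg_{G_X}(f)=|f|$ for every $f$. Since $M_F<\infty$, the graph $G_X$ has bounded vertex degree, hence is locally finite; it is connected because any two faces share an apartment (by (PCPS1)) and apartments are connected tessellations. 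Consequently $G_X$, as a metric graph, is a proper geodesic space of bounded geometry.

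Next I would supply the three structural hypotheses demanded by Cao's theorem. Gromov hyperbolicity of $(X,d)$ is the content of Theorem~\ref{t:Gromov}, using $\ka_c^{(\Sigma)}<0$ and $M_V,M_F<\infty$; and, invoking (${\rm PCPS1}^*$), the same theorem shows that $X(\infty)$ contains the boundary circle of an apartment, so $X$ is non-elementary. (This non-elementarity is also visible directly from the exponential growth of the apartments forced by strict negative corner curvature together with $M_V,M_F<\infty$; it is exactly the non-degeneracy that excludes hyperbolic spaces such as a line or a strip, which carry quasi-poles yet have vanishing Cheeger constant.) For the quasi-pole, I would fix a center $o\in F$. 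Since $\ka_c^{(\Sigma)}\le 0$, Theorem~\ref{t:Cut_locus} gives $\mathrm{Cut}(o)=\emptyset$, so every shortest path from $o$ to an arbitrary face $f$ extends to an infinite geodesic ray through $f$. In $G_X$ this says every point lies on a geodesic ray issuing from the single vertex $o$, so $\{o\}$ is a quasi-pole (indeed an honest pole) in the sense of \cite{C}.

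Applying Cao's theorem to $G_X$ then produces a constant $c>0$ with $|\partial_E K|\ge c\cdot\mathrm{vol}_{G_X}(K)$ for every finite $K\subseteq F$, where $\partial_E K$ is the edge boundary in $G_X$ and $\mathrm{vol}_{G_X}(K)=\sum_{f\in K}\deg_{G_X}(f)=\sum_{f\in K}|f|=\mathrm{vol}(K)$. Since $\partial K$, as defined before Theorem~\ref{t:cheeger1}, counts precisely the adjacent pairs of faces across the cut, it agrees with the edge boundary of $G_X$, so $\al=\al_F\ge c>0$. If instead \cite{C} normalizes by the vertex count $|K|$, the bounds $1\le|f|\le M_F<\infty$ make the two Cheeger constants comparable and the conclusion is unaffected.

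The main obstacle is to check that the hypotheses of \cite{C} are met in exactly the form stated there: that ``empty cut locus at $o$'' really is the quasi-pole condition Cao uses, and, more delicately, that the non-degeneracy input (non-elementarity / exponential growth, obtained from $\ka_c^{(\Sigma)}<0$ together with Theorem~\ref{t:Gromov}, which is why (${\rm PCPS1}^*$) is assumed here) is the one Cao's proof actually requires — since Gromov hyperbolicity, bounded geometry and a quasi-pole do not by themselves force $\al>0$. The remaining step, identifying the graph Cheeger constant of $G_X$ with $\al$ via $\deg_{G_X}(f)=|f|$, is routine bookkeeping.
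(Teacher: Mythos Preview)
Your approach is essentially the paper's: reduce to Cao's theorem~\cite{C} on the associated graph $G_X$, supplying bounded geometry from $M_F<\infty$, the quasi-pole from Theorem~\ref{t:Cut_locus}, Gromov hyperbolicity from Theorem~\ref{t:Gromov}, and a boundary non-degeneracy condition from the second half of Theorem~\ref{t:Gromov} via~(${\rm PCPS1}^*$); the conversion between Cao's constant and $\alpha$ is handled exactly as you indicate, using $M_F<\infty$.

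The one imprecision worth flagging is the boundary condition. Cao's hypothesis is not merely that $X$ be non-elementary, but that \emph{every connected component} of $X(\infty)$ have positive Gromov diameter; non-elementarity alone does not rule out, say, an isolated boundary point attached to an otherwise rich boundary (cf.\ the paper's Remark following the proof). Fortunately Theorem~\ref{t:Gromov} gives you precisely this stronger statement---each connected component of $X(\infty)$ contains the boundary of an apartment, homeomorphic to $S^1$---so you should invoke that conclusion rather than the weaker ``$X(\infty)$ contains a circle''. The paper then closes the argument by exhibiting a bi-infinite geodesic in any apartment and computing that its two ends lie at Gromov distance~$1$, hence each such $S^1$ (and so each component) has positive diameter. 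You correctly anticipated that matching Cao's exact hypothesis is the delicate point; this is where the match is made.
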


A straightforward consequence of Theorem \ref{t:cheeger1} and Theorem~\ref{thm:maxbuildPCPS} is the following result.

\begin{cor} Every $2$-dimensional locally finite hyperbolic building with regular
  hyperbolic polygons as faces has a positive Cheeger constant $\al > 0$.
\end{cor}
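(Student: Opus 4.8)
The plan is to deduce the statement directly from Theorem~\ref{t:cheeger1} by verifying its three hypotheses for a $2$-dimensional locally finite hyperbolic building $X$ whose faces are regular hyperbolic polygons: that $X$ is a {\PCPS} satisfying (${\rm PCPS1}^*$), that $M_V,M_F<\infty$, and that $\ka_c^{(\Sigma)}<0$ on every apartment $\Sigma$. Since Theorem~\ref{t:cheeger1} then yields $\al>0$, the whole argument reduces to checking these conditions.

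First I would equip $X$ with its maximal apartment system; by Theorem~\ref{thm:maxbuildPCPS} this makes $X$ a {\PCPS} for which (${\rm PCPS1}^*$) holds, settling the first two conditions at once. The boundedness $M_V,M_F<\infty$ comes from the homogeneity of buildings: all faces are isometric copies of a single model polygon $P$, so $|f|_\Sigma=|\partial f|$ is constant, and local finiteness forces the thicknesses $|e|-1$ to be bounded (there are only finitely many types of edges and vertex links, and residues of the same type are isomorphic); hence $|f|=\sum_{e\in\partial f}(|e|-1)$ and $|v|$ are uniformly bounded.

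The key step is the curvature inequality. Fix an apartment $\Sigma$; it is a tessellation of the hyperbolic plane $\mathbb{H}^2$ by isometric copies of the regular hyperbolic polygon $P$, say a $p$-gon with common interior angle $\theta$. Because $P$ is a genuine hyperbolic polygon, $\theta$ is strictly below the Euclidean value, $\theta<\pi\,\tfrac{p-2}{p}$. On the other hand, at each vertex $v$ of $\Sigma$ exactly $|v|_\Sigma$ copies of $P$ meet and their angles at $v$ sum to $2\pi$, so $\theta=2\pi/|v|_\Sigma$; in particular every vertex of $\Sigma$ has the same degree. Combining the two facts gives $2\pi/|v|_\Sigma<\pi\,\tfrac{p-2}{p}$, i.e. $\tfrac{1}{|v|_\Sigma}+\tfrac{1}{p}<\tfrac12$, which is precisely $\ka_c^{(\Sigma)}(v,f)=\tfrac{1}{|v|_\Sigma}-\tfrac12+\tfrac{1}{|f|_\Sigma}<0$ for every corner $(v,f)\in C_\Sigma$. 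As this holds on every apartment, all hypotheses of Theorem~\ref{t:cheeger1} are met, and we conclude $\al>0$.

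The only real subtlety — and the step I would be most careful about — is the geometric bookkeeping of the previous paragraph: one must make sure that ``hyperbolic building with regular hyperbolic polygons as faces'' genuinely forces the apartments to be vertex-regular $\{p,|v|_\Sigma\}$-tessellations of $\mathbb{H}^2$ with angle strictly below the Euclidean threshold, which is where the words \emph{regular} and \emph{hyperbolic} do the work (a non-regular triangle, e.g. a $(2,3,7)$-triangle, would produce positive corner curvature at some corner). If one wanted to allow faces of several regular polygon types, the clean per-corner equivalence above degrades to the weaker statement $\sum_{f\ni v}\ka_c^{(\Sigma)}(v,f)<0$, obtained by summing the angle-defect inequalities $\theta_i<\pi\,\tfrac{|f_i|_\Sigma-2}{|f_i|_\Sigma}$ over the faces at $v$, and an extra argument would be needed; but for genuine buildings, where all chambers are congruent, this issue does not arise.
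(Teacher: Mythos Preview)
Your approach is essentially the same as the paper's: pass to the maximal apartment system via Theorem~\ref{thm:maxbuildPCPS} to secure (${\rm PCPS1}^*$), observe that the Cheeger constant is independent of the apartment system, and then invoke Theorem~\ref{t:cheeger1}. You supply more detail than the paper does---in particular the explicit angle-defect computation $\tfrac{1}{|v|_\Sigma}+\tfrac{1}{p}<\tfrac12$ for regular hyperbolic $p$-gons (which the paper defers to its later proposition on hyperbolic buildings) and an argument for $M_V,M_F<\infty$ (which the paper's three-line proof omits entirely); your justification of the latter via ``residues of the same type are isomorphic'' is not literally true for arbitrary buildings, but this is a subtlety the paper also glosses over.
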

\begin{proof}Note that negative sectional curvature and the definition do not depend on the choice of the apartment systems. Hence, we switch to the corresponding building with maximal apartment system to obtain (${\it PCPS1}^*$) by Theorem~\ref{thm:maxbuildPCPS}. We conclude the statement by Theorem \ref{t:cheeger1}.
\end{proof}

In particular, all buildings in Examples \ref{ex:bourdon}-\ref{ex:hag}
have positive Cheeger constant.

\begin{proof}[Proof of Theorem \ref{t:cheeger1}]
  Note that by the comment at the end of Subsection~\ref{s:PCPS} we
  can associate to every {\PCPS} $X=(V,E,F)$ a graph $G_X$ by
  considering the faces of $X$ as vertices in $G_X$ and the
  edge relation given by the adjacency relation of the faces. In this
  light \cite[Theorem~1]{C} tells us that a polygonal complex $(X,d)$
  has positive Cheeger isoperimetric constant if the following four
  assumptions are satisfied
  \begin{itemize}
    \item [(1)] $(X,d)$ has bounded face degree $M_{F} < \infty$,
    \item [(2)]  $(X,d)$ admits a quasi-pole,
    \item [(3)]  $(X,d)$ is Gromov hyperbolic,
    \item [(4)] every connected component of the Gromov boundary
      $X(\infty)$ has positive diameter (with respect to a fixed
      Gromov metric),
  \end{itemize}
  where (2) means that there is a finite set $\Omega \subset F$ of
  faces and a $\de>0$ such that every face $f \in F$ is found in a
  $\de$-neighborhood of a geodesic ray emanating from this finite
  set. Moreover, for (4) we follow \cite{C} and define for two
  geodesic rays $(f_i), (f_i') \subset F$ with the same initial face
  $f_0=f_0'$ representing the points $\xi,\eta \in X(\infty)$:
  $$ d_{f_0,\epsilon}(\xi,\eta) =
  \liminf_{n \to \infty} \exp(-\eps(n-\tfrac{1}{2}d(f_n,f_n')), $$
  Then there is an $\eps > 0$ such that $d_{f_0,\epsilon}$ is a metric
  which is called a Gromov metric. Note that the Cheeger constant
  considered in \cite{C} is defined as
  $$ h = \inf_{H\subseteq F}\frac{|\partial_{F} H|}{|H|}, $$
  where $\partial_{F} H=\{f\in F\mid d(f,H)=1\}$. As every face in
  $\partial_{F}H$ is connected with $H$ via at least one edge we have
  $|\partial H| \ge |\partial_{F} H|$. Also $\mathrm{vol}(H)\le
  M_{F}|H|$ and, therefore,
  \begin{align*}
    \al \ge \frac{h}{M_{F}}.
  \end{align*}
  Hence, by the assumption $M_{F} < \infty$ the constant $\al$ is
  positive whenever $h$ is. Thus, it remains to check the conditions
  (1)-(4).

  \smallskip

  Let $X=(V,E,F)$ be a {\PCPS} which satisfies the assumptions of the
  theorem. Then, (1) is obviously satisfied. Secondly, by absence of
  cut-locus, Theorem~\ref{t:Cut_locus}, condition (2) is satisfied and
  by Theorem~\ref{t:Gromov} condition (3) is satisfied. Finally, let
  us turn to (4). By Theorem~\ref{t:Gromov} and the assumption (${\rm
    PCPS1}^*$) we know that every connected component of the Gromov
  boundary of $X$ contains the Gromov boundary of an
  apartment. Therefore, it suffices to show (4) for the Gromov
  boundary of an apartment. We observe that we find in every apartment
  a bi-infinite geodesic. This can be seen as follows: Let
  $(f_{-n},\ldots, f_{n})$ be a geodesic in an apartment $\Sigma \in
  {\mathcal A}$. By \cite[Theorem~1]{BP2} the face $f_{n}$ is not in
  $\mathrm{Cut}_{\Sigma}(f_{-n})$ and, therefore, there is $f_{n+1}
  \in \Sigma$ such that $(f_{-n},\ldots, f_{n+1})$ is a
  geodesic. Simultaneously, $f_{-n}$ is not in
  $\mathrm{Cut}_{\Sigma}(f_{n+1})$ and therefore there is $f_{-(n+1)}
  \in \Sigma$ such that $(f_{-(n+1)},\ldots, f_{n+1})$ is a geodesic
  in $\Sigma$. In this way , we construct a bi-infinite geodesic
  $(f_{n})_{n\in\Z}$. Let $\xi, \eta \in X(\infty)$ be the end points
  of the geodesics $(f_{n})_{n\ge0} \subset F_\Sigma$.  Since
  $(f_{n})_{n\in \Z}$ is a bi-infinite geodesic, we have
  $d(f_{n},f_{-n})))=2n$. So, we obtain for any $\eps>0$
  \begin{align*}
    d_{f_{0},\eps}(\xi,\eta) = \liminf_{n\to\infty}
    \exp(-\eps(n-\tfrac{1}{2}d(f_{n},f_{-n}))) =1.
  \end{align*}
  Hence, (4) is satisfied and we finished the proof.
\end{proof}

\begin{rmk}
  The question whether a Gromov hyperbolic space has positive Cheeger
  constant is very subtle. Note that every infinite tree $T$ is Gromov
  hyperbolic. But if we attach to one of its vertices the ray
  $[0,\infty)$ with integer vertices then the new tree $\widetilde
  T_1$ is still Gromov hyperbolic but it has vanishing Cheeger
  constant. This new ray adds an isolated point to the Gromov boundary
  of $T$ and therefore assumption (4) is violated for $\widetilde
  T_1$. On the other hand, if we attach to a sequence of vertices
  $(v_n)_{n \in \N}$ in $T$ the segments $[0,n]$ with integer vertices
  and denote the new tree by $\widetilde T_2$, then this new tree has
  again vanishing Cheeger constant. In this case both trees $T$ and
  $\widetilde T_2$ even have the same boundaries, but $\widetilde T_2$
  cannot have a quasi-pole since the newly added vertices do not lie
  in geodesic rays and, therefore, assumption (2) is violated (see end
  of Subsection 1.1 in \cite{C}).
\end{rmk}

The next result provides explicit lower bounds for the Cheeger
constant in terms of the face degrees and minimal and maximal thickness.

\begin{thm} \label{t:cheeger2} Let $X$ be a {\em locally finite}
  {\PCPS}. Then,
  \begin{align*}
    \al\ge \inf_{f \in F}
    \Big( \frac{m_{E}(f)}{M_{E}(f)} \Big(1-\frac{6}{{|\partial f|}}\Big) \Big)
    \ge \frac{m_{E}}{M_{E}}
    \Big( 1-\frac{6}{\min_{f \in F}{|\partial f|}} \Big).
    \end{align*}
    In particular, $\al>0$ if $|\partial f| \ge 7$ and $M_{E}<\infty$.
    Secondly,
  \begin{align*}
    \al \ge \inf_{f\in F} \frac{m_{E}(f)-2}{|f|}
    \ge \frac{m_{E}-2}{M_{F}}.
    \end{align*}
    In particular, $\al>0$ if $m_{E}\ge 3$ and $M_{F}<\infty$.
\end{thm}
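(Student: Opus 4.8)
The plan is to estimate the quotient $|\partial K|/\mathrm{vol}(K)$ directly for an arbitrary finite $K\subseteq F$. First I would note that it suffices to treat $K$ connected in $G_X$: if $K=K_1\sqcup\dots\sqcup K_r$ are the $G_X$-components of $K$, no edge of $G_X$ joins distinct $K_i$, so $|\partial K|=\sum_i|\partial K_i|$, $\mathrm{vol}(K)=\sum_i\mathrm{vol}(K_i)$, and the quotient is a weighted average of the corresponding quotients. Next I would set up the incidence bookkeeping: for $e\in E$ put $c(e)=|e|$ and $n_K(e)=\#\{f\in K:e\in\partial f\}$; counting the pairs $(f,e)$ with $f\in K$ and $e\in\partial f$, and the pairs of distinct chambers of $K$ sharing a common edge, gives
\[
\mathrm{vol}(K)=\sum_{e\in E}n_K(e)\bigl(c(e)-1\bigr),\qquad
|\partial K|=\sum_{e\in E}n_K(e)\bigl(c(e)-n_K(e)\bigr),
\]
so that $\mathrm{vol}(K)-|\partial K|=2\sum_{e\in E}\binom{n_K(e)}{2}=2\,e_{G_X}(K)$, where $e_{G_X}(K)$ is the number of edges of $G_X$ inside $K$. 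Both assertions thus amount to a linear isoperimetric inequality for $G_X$: an upper bound for $e_{G_X}(K)$ in terms of $\mathrm{vol}(K)$.

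For the first inequality the idea is to exploit that each apartment is a \emph{planar} tessellation. When $K$ lies in a single apartment $\Sigma$, a Gauss--Bonnet/Euler computation in $\Sigma$ bounds the number of edges of $\Sigma$ incident to exactly one face of $K$ from below by $\sum_{f\in K}\bigl(|\partial f|-6\bigr)$ up to an additive constant, the "$6$" reflecting that every vertex of a planar tessellation has degree at least $3$. Each such boundary edge $e$ lies in $c(e)-1\ge m_E(f)$ further chambers outside $K$, where $f\in K$ is the face with $e\in\partial f$, so $|\partial K|$ dominates a weighted version of that boundary count; dividing by $\mathrm{vol}(K)=\sum_{f\in K}|f|\le\sum_{f\in K}M_E(f)\,|\partial f|$ and passing to infima over faces gives the bound $\frac{m_E(f)}{M_E(f)}\bigl(1-\frac{6}{|\partial f|}\bigr)$, and hence also its coarsening $\frac{m_E}{M_E}\bigl(1-\frac{6}{\min_f|\partial f|}\bigr)$. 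The second inequality is proved by a coarser, more local count: for $f\in K$ one bounds how many of the neighbours of $f$ can be trapped inside $K$, using that each edge $e\in\partial f$ lies in at least $m_E(f)$ chambers besides $f$; summing the resulting surplus over $K$ gives $|\partial K|\ge\sum_{f\in K}\bigl(m_E(f)-2\bigr)$. Writing the latter sum as the $|f|$-weighted sum $\sum_{f\in K}\tfrac{m_E(f)-2}{|f|}\,|f|$ then yields $|\partial K|\ge\bigl(\inf_{f}\tfrac{m_E(f)-2}{|f|}\bigr)\mathrm{vol}(K)\ge\tfrac{m_E-2}{M_F}\,\mathrm{vol}(K)$.

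The step I expect to be the main obstacle is that a finite set of faces need not lie in a single apartment, so the planar Euler argument is not directly available for a general $K$. One has to work locally, using (PCPS1) — every face together with any chosen neighbour lies in a common apartment — and convexity (PCPS2) — any gallery whose two end-faces lie in an apartment $\Sigma$ stays inside $\Sigma$ — to realize each boundary incidence of $K$ inside a suitable apartment and transport the planar count back to $X$; this localization is precisely what forces the loss recorded by the factor $m_E(f)/M_E(f)$ rather than a sharper constant. Once the two displayed inequalities are established, the stated special cases are immediate: if $|\partial f|\ge 7$ and $M_E<\infty$ then $\frac{m_E}{M_E}\bigl(1-\frac{6}{\min_f|\partial f|}\bigr)>0$, and if $m_E\ge 3$ and $M_F<\infty$ then $\frac{m_E-2}{M_F}>0$.
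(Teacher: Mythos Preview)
Your approach is genuinely different from the paper's, and the obstacle you identify is real and, as stated, unresolved. The paper does \emph{not} estimate $|\partial K|/\mathrm{vol}(K)$ for general finite $K$. Instead it invokes the Dodziuk--Karp criterion \cite[Lemma~1.15]{DKa}: if for some center $o\in F$ one has $|f|_{+}-|f|_{-}\ge C\,|f|$ for \emph{every} face $f$, then $\alpha\ge C$. This reduces the whole problem to a face-by-face estimate of forward and backward degrees with respect to a fixed $o$, and the crucial gain is that for each single $f$ one only needs \emph{one} apartment $\Sigma$ containing both $o$ and $f$ (by (PCPS1)); convexity then traps all geodesics from $o$ through $f$ inside $\Sigma$. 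From there, Proposition~\ref{prop:bigon} gives $|f|_{-}\le 2$, and the planar structure of $\Sigma$ (via \cite[Thm.~3.2]{BP1} and Theorem~\ref{t:Cut_locus}) gives $|f|_{\Sigma,+}\ge |\partial f|-4$, whence $|f|_{+}\ge m_E(f)(|\partial f|-4)$ by Lemma~\ref{l:neighbors}(a). Dividing by $|f|\le M_E(f)\,|\partial f|$ yields the first bound. For the second bound one uses only $|f|_{+}\ge m_E(f)$ (there is at least one forward-pointing edge, and all chambers on it are forward neighbors) and $|f|_{-}\le 2$.

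Your direct plan stalls precisely where you say it does: for an arbitrary finite $K$ you have no mechanism to embed $K$ (or even a substantial portion of its boundary) in a single apartment, and the vague ``localization'' you propose is not a proof. Likewise, your argument for the second inequality is not correct as written: the sentence ``summing the resulting surplus over $K$ gives $|\partial K|\ge\sum_{f\in K}(m_E(f)-2)$'' does not follow from ``each edge of $f$ lies in at least $m_E(f)$ other chambers,'' since all of those chambers could lie in $K$. What actually forces the $-2$ is the \emph{radial} structure (at most two backward neighbors, at least one forward edge), which only becomes available once you fix a center and work face by face. I would recommend abandoning the direct isoperimetric route and instead proving the pointwise inequality $|f|_{+}-|f|_{-}\ge C|f|$; note also that the ingredients Proposition~\ref{prop:bigon} and Theorem~\ref{t:Cut_locus} carry an implicit non-positive corner curvature hypothesis, which is automatic in the regime $|\partial f|\ge 7$ where the first bound is non-trivial.
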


The theorem implies in particular that all locally finite $2$-dimensional Euclidean buildings with minimal thickness $m_E \ge 3$ (i.e., every edge is contained in at least $4$ chambers) have positive Cheeger constant.
Moreover,  all locally finite hyperbolic buildings with generating polygon $P$ at least a $7$-gon have also
positive Cheeger constant.

\begin{proof} Translating \cite[Lemma~ 1.15]{DKa} into the 'dual'
  language (as the comment at the end of Section~\ref{s:PCPS}
  indicates) tells us that if there is a center $o \in V$ and $C \ge 0$
  such that
  \begin{align*}
    {|f|}_{+}-{|f|}_{-}\ge C|f|
  \end{align*}
  for all $f\in F$, then $\al\ge C$. Thus, it suffices to estimate
  $\inf_{f\in F}({|f|}_{+}-{|f|}_{-})/|f|$ to get a lower bound on
  $\al$. For $f\in F$, let $n\ge0$ be such that $f\in S_{n}$ and let
  $\Sigma$ be an apartment that contains $f$. By
  Proposition~\ref{prop:bigon} we immediately have ${|f|}_{-}\leq
  2$. Moreover, by \cite[Theorem~3.2]{BP1} (combined with
  Theorem~\ref{t:Cut_locus}) there are at most two neighbors of $f$ in
  $F_{\Sigma}\cap S_{n}$ and, therefore, $ {|f|}_{+}\geq
  m_{E}(f){|f|}_{\Sigma,+} \ge m_{E}(f)( {|\partial f|}-4 )$. Here
  $|f|_{\Sigma,+}$ denotes the number of forward neighbors of $f$
  within $\Sigma$, which is $|\partial f|$ minus the number ($\le 2$)
  of backward neighbors of $f$ minus the number ($\le 2$) of
  neighbors of $f$ in $F_{\Sigma}\cap S_{n}$. Moreover, $|f|\leq
  M_{E}(f){|\partial f|}$.  Hence,
  \begin{align*}
    \frac{{|f|}_{+}-{|f|}_{-}}{|f|} \ge
    \frac{m_{E}(f)}{M_{E}(f)} \left(1-\frac{6}{|\partial f|}\right)
  \end{align*}
  which yields the first inequality. On the other hand, we have by
  Theorem~\ref{t:Cut_locus} and Lemma~\ref{l:neighbors}~(a)
  ${|f|}_{+}\ge m_{E}(f)$.  Hence, by ${|f|}_{-}\leq 2$
  \begin{align*}
    \frac{{|f|}_{+}-{|f|}_{-}}{|f|}\ge \frac{ m_{E}(f)-2}{|f|}
  \end{align*}
  This finishes the proof.
\end{proof}

From the proof we may easily extract the following statement which
turns out to be useful for studying the essential spectrum of the
Laplacian. Define for a locally finite polygonal complex $X=(V,E,F)$
the {\em Cheeger constant at infinity} by
\begin{align*}
  \al_{\infty} = \sup_{K\subseteq F\,\mathrm{finite}} \al_{F\setminus K}.
\end{align*}

\begin{cor} \label{c:cheeger}
  Let $X$ be a locally finite {\PCPS}. Then
  \begin{align*}
    \al_{\infty}\ge \sup_{K \subseteq F\,\mathrm{finite}} \inf_{f \in F \backslash K}
    \frac{m_{E}(f)}{M_{E}(f)} \Big(1-\frac{6}{|\partial f|}\Big)
  \end{align*}
\end{cor}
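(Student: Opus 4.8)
The plan is to extract Corollary~\ref{c:cheeger} directly from the proof of Theorem~\ref{t:cheeger2}, observing that the whole argument there is \emph{local} in the sense that only properties of faces individually enter the key estimate. First I would recall the mechanism: Theorem~\ref{t:cheeger2} rests on the dual form of \cite[Lemma~1.15]{DKa}, which says that if for a center $o$ one has ${|f|}_{+} - {|f|}_{-} \ge C |f|$ for all $f$ in some set, then the associated Cheeger-type constant of that set is at least $C$. The point is that this implication applies verbatim to any subset $F \setminus K$ of faces, because $\al_{F\setminus K}$ is defined as an infimum over finite subsets $K' \subseteq F\setminus K$, and for any such $K'$ the boundary term $|\partial K'|$ only involves adjacencies, so the same combinatorial counting estimate $|\partial K'| \ge C\,\mathrm{vol}(K')$ goes through once the pointwise inequality holds on $F\setminus K$.

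Next I would assemble the pointwise bound on $F \setminus K$. For a fixed finite $K \subseteq F$ and a face $f \in F\setminus K$, the estimates from the proof of Theorem~\ref{t:cheeger2} give ${|f|}_{-} \le 2$ (Proposition~\ref{prop:bigon}), ${|f|}_{+} \ge m_E(f)(|\partial f| - 4)$ (using Theorem~\ref{t:Cut_locus} together with \cite[Theorem~3.2]{BP1} to bound the number of same-sphere neighbours by $2$), and $|f| \le M_E(f)|\partial f|$. These hold for \emph{every} face, so in particular on $F\setminus K$, yielding
\begin{align*}
  \frac{{|f|}_{+}-{|f|}_{-}}{|f|} \ge \frac{m_{E}(f)}{M_{E}(f)}\Big(1-\frac{6}{|\partial f|}\Big)
  \ge \inf_{g \in F\setminus K} \frac{m_{E}(g)}{M_{E}(g)}\Big(1-\frac{6}{|\partial g|}\Big) =: C_K.
\end{align*}
Applying the dual \cite[Lemma~1.15]{DKa} with the set $F\setminus K$ and the constant $C_K$ gives $\al_{F\setminus K} \ge C_K$. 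Taking the supremum over all finite $K \subseteq F$ then yields
\begin{align*}
  \al_{\infty} = \sup_{K\subseteq F\,\mathrm{finite}} \al_{F\setminus K}
  \ge \sup_{K\subseteq F\,\mathrm{finite}} \inf_{f\in F\setminus K}
  \frac{m_{E}(f)}{M_{E}(f)}\Big(1-\frac{6}{|\partial f|}\Big),
\end{align*}
which is the claim.

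The only delicate point is checking that the dual \cite[Lemma~1.15]{DKa} is legitimately applicable to a proper subset $F\setminus K$ rather than all of $F$, i.e.\ that choosing a center $o$ (which may or may not lie in $F\setminus K$) and running the forward/backward-neighbour count does not secretly use faces of $K$ in an essential way. This is fine because the counting inequalities for ${|f|}_{\pm}$ and $|f|$ above are purely intrinsic to each face $f$ and its incident edges, and $\partial K'$ for $K' \subseteq F\setminus K$ is estimated below by the forward degrees of faces in $K'$ in the usual telescoping fashion; the center $o$ only serves to orient edges via distance spheres and need not belong to the set under consideration. So the main obstacle is essentially bookkeeping: making sure the localization of the argument in Theorem~\ref{t:cheeger2} to $F\setminus K$ is stated cleanly. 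Everything else is a direct transcription of the earlier proof, and I would keep the write-up to a few lines, citing the proof of Theorem~\ref{t:cheeger2} rather than repeating the counting.
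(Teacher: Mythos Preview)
Your proposal is correct and follows exactly the route the paper intends: the paper gives no separate proof of Corollary~\ref{c:cheeger} beyond the remark that it ``may easily be extracted'' from the proof of Theorem~\ref{t:cheeger2}, and your write-up is precisely that extraction, localizing the pointwise estimate $({|f|}_{+}-{|f|}_{-})/|f| \ge \frac{m_E(f)}{M_E(f)}(1-6/|\partial f|)$ to $F\setminus K$ and then taking the supremum over finite $K$. Your discussion of why the Dodziuk--Karp lemma applies to the subset $F\setminus K$ is a reasonable elaboration of what the paper leaves implicit.
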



\subsection{Finiteness and infiniteness}

In this subsection we show that positivity or non-positivity of {\em
  sectional face curvature} determines whether a locally finite
polygonal complex with planar/spherical substructures
is finite or infinite. The statement that positive curvature
implies finiteness is an analogue of a theorem of Myers for Riemannian
manifolds \cite{M}.

\begin{thm} \label{t:myers}
  Let $X=(V,E,F)$ be a locally finite {\PCPSSS} with
  apartment system ${\mathcal A}$.
  \begin{itemize}
  \item[(a)] If we have $\kappa^{(\Sigma)}(f) > 0$ for all $\Sigma \in
    {\mathcal A}$ and all $f \in F_\Sigma$, then $F$ is finite and $X$
    is a {\PCSS}.
  \item[(b)] If we have $\kappa^{(\Sigma)}(f) \le 0$ for all $\Sigma
    \in {\mathcal A}$ and all $f \in F_\Sigma$, then $F$ is infinite
    and $X$ is a {\PCPS}.
  \end{itemize}
\end{thm}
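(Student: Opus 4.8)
The plan is to reduce the statement for general {\PCPSSS} to the known combinatorial Gauss--Bonnet picture for a single tessellation (the formula $\chi(S)=\sum_{f}\kappa^{(\Sigma)}(f)$ quoted after Definition~\ref{dfn:curv}, and the corresponding results in \cite{BP1} for planar tessellations), and then transfer the conclusion from an apartment to the whole complex using (PCPS1) and Lemma~\ref{l:d}.

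For part (a): suppose $\kappa^{(\Sigma)}(f)>0$ for all $\Sigma$ and all $f\in F_\Sigma$. I would first argue that $F$ cannot be infinite. If it were, fix a center $o\in F$; by (PCPS1) every face lies in some apartment with $o$, and I want to produce inside a single apartment an arbitrarily long geodesic of faces, contradicting positivity of sectional face curvature. Concretely, in a planar tessellation $\Sigma$ with all $\kappa^{(\Sigma)}(f)>0$, the Gauss--Bonnet formula forces $\Sigma$ to be finite (a planar tessellation is noncompact, $\chi(\R^2)$-type reasoning gives a contradiction with the positive sum being finite only if $F_\Sigma$ is finite, and an infinite planar tessellation with uniformly... — more precisely, \cite[Theorem~?]{BP1} already tells us that a planar tessellation with positive corner/face curvature everywhere must be finite, hence spherical, contradicting planarity). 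So in fact \emph{no} apartment can be an infinite planar tessellation: every $\Sigma\in{\mathcal A}$ must be a spherical tessellation, i.e. $X$ is a {\PCSS}. Then, again by (PCPS1), any two faces of $X$ lie in a common apartment, which is now a finite spherical tessellation; since a spherical tessellation has only finitely many faces and $X$ is covered by... one needs that $F$ itself equals the union over apartments — but any single apartment through a fixed face $o$ already, together with (PCPS1), shows $d(o,f)$ is bounded by the diameter of whichever spherical apartment contains $o$ and $f$. Here I must be careful: different spherical apartments could have different diameters. The clean way: by local finiteness and (PCSS3), if $F$ were infinite there is an infinite geodesic ray of faces (König's lemma on the locally finite dual graph, since balls are finite); take two faces $f_0,f_N$ far apart on this ray, put them in a common apartment $\Sigma$ by (PCPS1); by Lemma~\ref{l:d} and (PCPS2) the segment $f_0,\dots,f_N$ lies in $\Sigma$ and is $d_\Sigma$-geodesic, so $\Sigma$ contains geodesics of unbounded length, contradicting that $\Sigma$ is a finite tessellation. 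Hence $F$ is finite, and then $X$, being a finite polygonal complex all of whose apartments are spherical, is itself a {\PCSS} (one apartment through any pair of faces already exhausts $F$ when $F$ is finite, so $X=\Sigma$ is a single spherical tessellation).

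For part (b): suppose $\kappa^{(\Sigma)}(f)\le 0$ everywhere. This is almost immediate from part (a)'s machinery run in reverse together with Theorem~\ref{t:Cut_locus}. First, no apartment can be a spherical tessellation: a spherical tessellation $\Sigma$ has $\sum_{f\in F_\Sigma}\kappa^{(\Sigma)}(f)=\chi(S^2)=2>0$, which is incompatible with $\kappa^{(\Sigma)}\le 0$ on all of $F_\Sigma$. Hence every apartment is a planar tessellation and $X$ is a {\PCPS}. For infiniteness of $F$: a planar tessellation is noncompact, so any apartment $\Sigma$ already has infinitely many faces $F_\Sigma\subseteq F$; hence $F$ is infinite. (Alternatively, one can invoke the Corollary to Theorem~\ref{t:Cut_locus} — non-positive corner curvature is not assumed here, but non-positive \emph{face} curvature on a planar apartment still gives absence of cut locus within that apartment by \cite[Theorem~1]{BP2}, or more simply planarity directly forces $|F_\Sigma|=\infty$.)

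The main obstacle I anticipate is part (a): transferring "every apartment is finite" to "$F$ is finite." Each apartment is finite, but a priori $X$ could be an infinite union of finite spherical apartments glued along faces. The key leverage is local finiteness plus (PCPS1): local finiteness of the dual graph $G_X$ makes balls $B_n(o)$ finite, so an infinite $F$ yields (by König's lemma) an infinite geodesic ray of faces; then (PCPS1) forces two arbitrarily distant faces of this ray into one apartment, and (PCPS2) together with Lemma~\ref{l:d} forces the whole geodesic segment between them into that apartment, contradicting its finiteness. I would present exactly this argument, and then note that once $F$ is finite, $X$ coincides with any apartment containing a face of maximal pairwise distance — more carefully, with $F$ finite and every pair of faces in a common apartment, picking any apartment $\Sigma$ and noting $F_\Sigma\subseteq F$ with each of the finitely many faces of $F$ joined to $\Sigma$'s faces inside some apartment... the cleanest finish is: by (PCPS1) and finiteness there is an apartment $\Sigma_0$ containing two faces realizing $\operatorname{diam}(F)$; any further face would extend a geodesic beyond the diameter, impossible, so $F=F_{\Sigma_0}$ and $X=\Sigma_0$ is a spherical tessellation, i.e. a {\PCSS}.
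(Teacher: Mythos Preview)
Your argument for part~(b) is correct and coincides with the paper's: a spherical apartment would have $\sum_f \kappa^{(\Sigma)}(f)=\chi(\Sp^2)=2>0$, contradicting $\kappa^{(\Sigma)}\le 0$; hence every apartment is planar and therefore infinite, so $F$ is infinite.

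Your argument for part~(a) has a genuine gap. In step ``take two faces $f_0,f_N$ far apart on this ray, put them in a common apartment $\Sigma$,'' the apartment $\Sigma=\Sigma_N$ \emph{depends on $N$}. You then write ``$\Sigma$ contains geodesics of unbounded length, contradicting that $\Sigma$ is a finite tessellation'' --- but no single apartment is being shown to contain arbitrarily long geodesics; rather, $\Sigma_N$ contains a geodesic of length $N$, and nothing you have established prevents $|F_{\Sigma_N}|\to\infty$. Knowing merely that each apartment is finite is not enough: you need a \emph{uniform} bound on the diameters (or cardinalities) of apartments, and this is exactly where the difficulty lies. (Note that $({\rm PCPS1}^*)$, which would put the whole ray into one apartment, is not assumed here.)

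The paper closes this gap by invoking DeVos--Mohar's resolution of Higuchi's conjecture \cite{DM}: a positively curved tessellation of $\Sp^2$ has at most $3444$ faces \emph{unless} it is a prism or antiprism (dually: two wheels of triangles or squares glued along the equator). The uniform bound dispatches all non-prism/antiprism apartments immediately. For the remaining case, the paper observes that if $f_0$ lies in a sequence of prism/antiprism apartments of unbounded size, then the ``pole'' vertex $v_0\in\overline{f_0}$ common to all these wheels must have $|v_0|=\infty$, contradicting local finiteness. Your K\"onig's-lemma setup is fine as a way to produce the faces $f_N$, but you still need this dichotomy (uniform bound versus prism/antiprism structure forcing an infinite-degree vertex) to reach a contradiction.

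A minor point: once $F$ is finite, you do not need to argue that $X$ equals a single apartment (and in general it need not --- spherical buildings have many apartments). Finiteness of $F$ already forces every $F_\Sigma\subseteq F$ to be finite, hence every apartment is spherical, which is precisely what ``$X$ is a {\PCSS}'' means.
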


\begin{proof} Note first that every planar tessellation has infinitely
  many faces (since the closure of every face is compact) while every
  spherical tessellation has finitely many faces. Therefore,
  $F_\Sigma$, $\Sigma \in {\mathcal A}$, is infinite if $X$ is a
  {\PCPS} and finite if $X$ is a {\PCSS}.

  We first prove (b) by contraposition. Assume that $X$ is a {\PCPSSS} with $F$ a finite set. We
  will show that there is a face with positive sectional face   curvature. Choose an apartment $\Sigma \in {\mathcal A}$. By the
  Gau\ss-Bonnet theorem, we have
  \begin{align*}
    \sum_{f \in F_\Sigma} \ka^{(\Sigma)}(f) = \chi(\Sp^2) = 2,
  \end{align*}
  where $\chi$ denotes the Euler characteristic. Hence, $\ka^{(\Sigma)}$
  must be positive on some faces. This shows (b).

Turning to (a), we assume that $\ka^{(\Sigma)}(f)>0$ for all $\Sigma \in
  {\mathcal A}$ and all $f \in F_\Sigma$. By DeVos-Mohar's proof of
  Higuchi's conjecture \cite[Theorem 1.7]{DM} (which is again stated
  in the dual formulation) every apartment must be finite. Moreover,
  the number of faces (in their case vertices) in an apartment is
  uniformly bounded by 3444 except
  for prisms and antiprisms\footnote{Note that in the meantime the
    bound has been improved by Zhang \cite{Z} to 580 vertices while
    the largest known graphs with positive curvature has 208 vertices
    and was constructed by Nicholson and Sneddon \cite{NS}.}. A prism in our dual setting are two
  wheels of triangles glued together along their boundaries and an
  antiprism are two wheels of squares glued together along their
  boundaries (see Figure \ref{wheels}). We can think of these two
  wheels as representing the lower and upper hemisphere of ${\mathbb
    S}^2$ and the boundaries as agreeing with the equator of the
  sphere ${\mathbb S}^2$.

  \begin{figure}[h]
    \begin{center}
      \scalebox{0.4}{\includegraphics{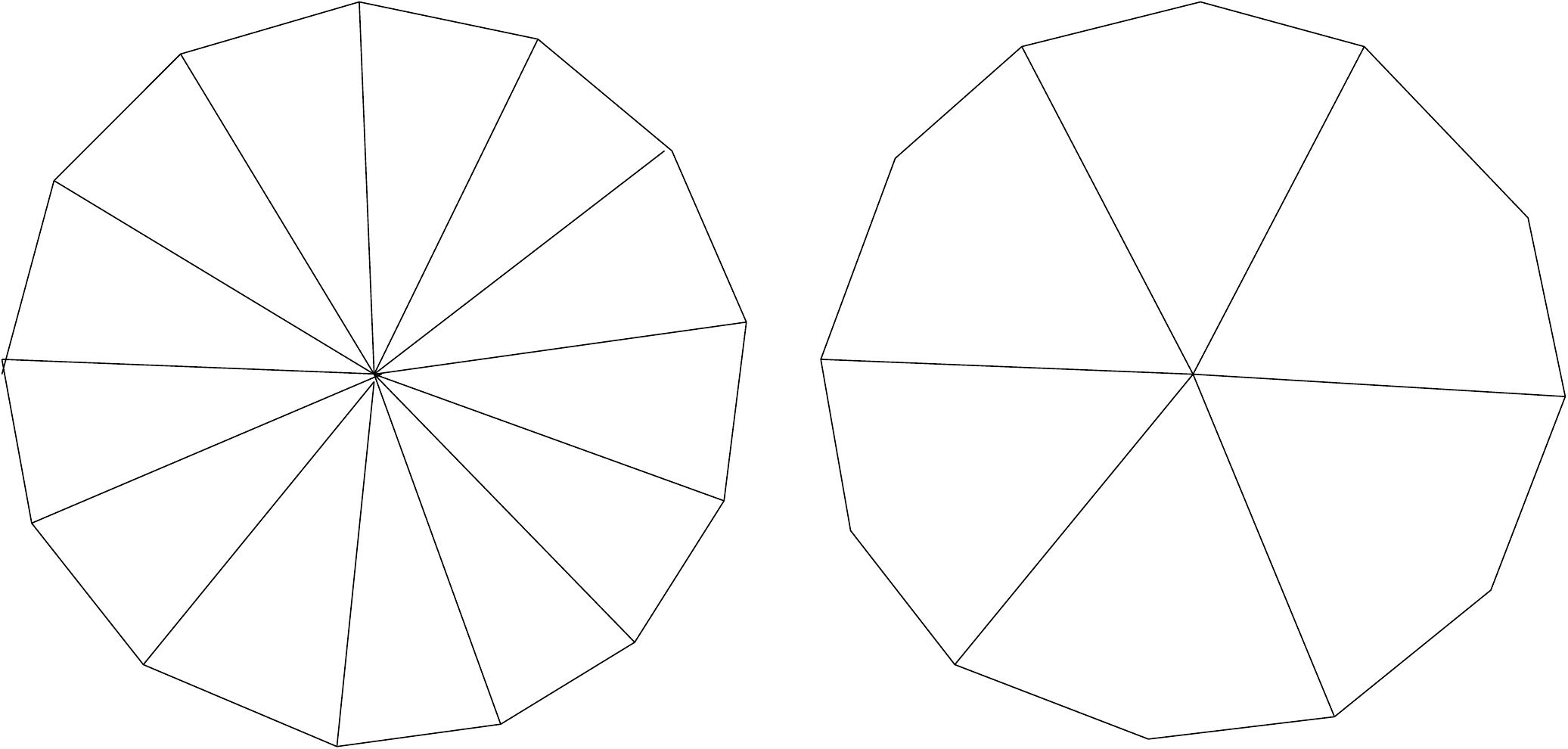}}
    \end{center}
    \caption{A wheel of triangles and a wheel of squares}
    \label{wheels}
  \end{figure}

  If $F$ is infinite, then there exists a face $f_0 \in F$ and a
  sequence of faces $f_n \in F$ with $d(f_0,f_n) \to \infty$ because
  of the local finiteness. Then $f_0$ must lie in a sequence
  $\Sigma_k$ of spherical apartments ${\mathbb S}^2$ tessellated by
  pairs of wheels with number of faces going to infinity, glued
  together along the equator. Assuming that $f_0$ lies always in the
  lower hemisphere of $\Sigma_k \cong {\mathbb S}^2$, then the south
  pole of all these apartments would be one and the same vertex $v_0
  \in \overline{f}_0$. But this would imply that $|v_0| = \infty$,
  which contradicts the local finiteness. Therefore, $F$ must be
  finite which implies that $X$ is a {\PCSS}.
\end{proof}

\section{Spectral theory} \label{s:spectrum}

In this section we turn to the spectral theory of the Laplacian on
polygonal complexes. As the geometric structure is determined by
assumptions on the faces it is only natural to consider the Laplacian
for functions on the faces. The reader who prefers to think about the
Laplacian as an operator on functions on the vertices is referred to
comment at the end of Section~\ref{s:PCPS}. That is we can associate a
graph $G_X$ to each polygonal complex $X=(V,E,F)$ in a
natural way.

Let $X=(V, E,F)$ be a locally finite polygonal complex and
\begin{align*}
  \ell^{2}(F)=\{\ph:F\to\C\mid \sum_{f\in F}|\ph(f)|^{2}<\infty\}.
\end{align*}
For functions $\ph,\psi\in\ell^{2}(F)$ the standard scalar product is
given by
\begin{align*}
  \langle \ph,\psi\rangle=\sum_{f\in F}\overline{\ph(f)}\psi(f),
\end{align*}
and the norm is given by $\|\ph\|=\sqrt{ \langle \ph,\ph\rangle}$.
Define the Laplacian $\Delta$ by
\begin{align*}
  \Delta\ph(f)=\sum_{g\in F, g\sim f}(\ph(f)-\ph(g))
\end{align*}
for functions in the domain
\begin{align*}
  D(\Delta)=\{\psi\in \ell^{2}(F)\mid \Delta\psi\in \ell^{2}(F)\}.
\end{align*}
It can be checked directly that the operator is positive and,
moreover, it is selfadjoint by \cite[Theorem~1.3.1]{Woj}.
Note that the operator $\Delta$ can be seen to coincide with the graph Laplacian on $\ell^{2}(G_{X})$.

By standard Cheeger estimates \cite{Kel} based on \cite{DKe,Fu} we have
\begin{align*}
  \lm_{0}(\Delta)\ge m_{F}(1-\sqrt{1-\al^{2}}),
\end{align*}
where $\lm_{0}(\Delta)$ denotes the bottom of the spectrum of $\Delta$
and
$$ m_{F} = \min_{f\in F} |f|.$$
Applying Theorem~\ref{t:cheeger1} gives a criterion when the bottom of
the spectrum is positive and Theorem~\ref{t:cheeger2} even gives
explicit estimates.


\subsection{Discreteness of spectrum and eigenvalue asymptotics}

\label{S:specdisc}

In this subsection we address the question under which circumstances the
spectrum of $\Delta$ is purely discrete. We prove an analogue of a
theorem of Donnelly-Li, \cite{DL}, for Riemannian manifolds that
curvature tending to $-\infty$ outside increasing compacta implies
emptiness of the essential spectrum.

For a selfadjoint operator $T$ we denote the eigenvalues below the
essential spectrum in increasing order counted with multiplicity by
$\lm_{n}(T)$, $n\ge0$. For two sequences $(a_{n})$, $(b_{n})$ we write
$a_{n}\sim b_{n}$ if there is $c>0$ such that $c^{-1}a_{n}\leq
b_{n}\leq c a_{n}$. We denote the maximal operator of multiplication
by the face degree by $D_{F}$. That is $D_{F}$ is an operator from
$\{\ph\in\ell^{2}(F)\mid |\cdot|\ph\in\ell^{2}(F)\}$ to $\ell^{2}(F)$
acting as
\begin{align*}
  D_F \ph(f)=|f|\ph(f).
\end{align*}
We call $X$ \emph{balanced} if there is $C > 0$ such that $C
m_{E}(f)\ge M_{E}(f)$ and \emph{strongly balanced} if
\begin{align*}
  \sup_{K \subseteq F\, \mathrm{finite}}
  \inf_{f \in F \setminus K}\frac{m_{E}(f)}{M_{E}(f)}=1.
\end{align*}
That means that $C$ in the definition of balanced equals $1$ asymptotically.
An analogue of the Donnelly-Li result reads as follows. Let
\begin{align*}
  \ka_{\infty} := \inf_{K \subseteq F \, \mathrm{finite}} \sup_{\Sigma
    \in {\mathcal A}, f \in F_{\Sigma}\setminus K} \ka^{(\Sigma)}(f).
\end{align*}

  \begin{thm} \label{t:DiscreteSpectrum} Let $X=(V,E,F)$ be a {\em
      locally finite} {\PCPS} that is balanced. If $\ka_{\infty} = -
    \infty$ then the spectrum of $\Delta$ is purely discrete and
  $$\lm_{n}(\Delta) \sim \lm_{n}(D_{F}).$$
  If, additionally, $X$ is
  strongly balanced, then $$\frac{\lm_{n}(\Delta)}{ \lm_{n}(D_{F})}\to 1\mbox{ as
  }n \to \infty.$$
  Finally, under the additional assumption
  $M_{E}<\infty$, purely discrete spectrum of $\Delta$ implies
  $\ka_{\infty} = -\infty$.
\end{thm}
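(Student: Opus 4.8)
The plan is to prove the implication in two stages. \emph{Stage one} is a soft spectral observation: purely discrete spectrum of $\Delta$ forces the face degrees to tend to infinity, i.e.\ $\{f\in F\mid |f|\le C\}$ is finite for every $C<\infty$. \emph{Stage two} is an elementary combinatorial estimate that converts this into $\ka_{\infty}=-\infty$ using the hypothesis $M_{E}<\infty$. (The balancedness assumption is not needed in this direction.)

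For stage one, recall that $\Delta$ is the graph Laplacian on $G_{X}$, whose vertex set is $F$ and in which the degree of $f$ equals $|f|$. Suppose, for a contradiction, that $F_{0}:=\{f\in F\mid |f|\le C\}$ is infinite for some $C<\infty$. Every $u\in\ell^{2}(F)$ with finite support contained in $F_{0}$ lies in $D(\Delta)$, and using $\langle\Delta u,u\rangle=\tfrac12\sum_{f\sim g}|u(f)-u(g)|^{2}$ (the sum being over ordered pairs of adjacent faces), $|a-b|^{2}\le 2|a|^{2}+2|b|^{2}$, and the fact that each face has exactly $|f|$ neighbours in $G_{X}$, one gets
\[
  \langle\Delta u,u\rangle\le 2\sum_{f\in F}|f|\,|u(f)|^{2}\le 2C\,\|u\|^{2}.
\]
Since such $u$ span an infinite-dimensional subspace of the form domain of $\Delta$ on which the quadratic form is bounded by $2C$, the min-max principle yields $\inf\sigma_{\mathrm{ess}}(\Delta)\le 2C<\infty$, contradicting purely discrete spectrum. (Alternatively, one may use a Persson-type formula for $\inf\sigma_{\mathrm{ess}}(\Delta)$ and test with $\delta_{f}$, for which $\langle\Delta\delta_{f},\delta_{f}\rangle=|f|$.) Hence $|f|\to\infty$, i.e.\ $\{f\mid |f|\le C\}$ is finite for every $C$.

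For stage two, from $|f|=\sum_{e\in\partial f}(|e|-1)\le M_{E}|\partial f|$ we get $|\partial f|\ge |f|/M_{E}$, so $|\partial f|\to\infty$ as well. We bound the sectional face curvature uniformly over apartments: in a planar tessellation every vertex has degree at least $3$ (a consequence of (T1) and (PC2)), and by (T3) the boundary of a face $f$ is a simple closed polygon carrying exactly $|f|_{\Sigma}=|\partial f|$ vertices of $\Sigma$; hence, for every apartment $\Sigma$ containing $f$,
\[
  \ka^{(\Sigma)}(f)=1-\frac{|\partial f|}{2}+\sum_{v\in V_{\Sigma},\,v\in\overline{f}}\frac1{|v|_{\Sigma}}\le 1-\frac{|\partial f|}{2}+\frac{|\partial f|}{3}=1-\frac{|\partial f|}{6}\le 1-\frac{|f|}{6M_{E}}.
\]
Finally, given $N>0$, the set $K:=\{f\in F\mid |f|\le 6M_{E}(N+1)\}$ is finite by stage one, and for every $\Sigma\in\mathcal A$ and $f\in F_{\Sigma}\setminus K$ the bound above gives $\ka^{(\Sigma)}(f)<-N$; thus $\ka_{\infty}\le\sup_{\Sigma\in\mathcal A,\ f\in F_{\Sigma}\setminus K}\ka^{(\Sigma)}(f)\le -N$ for every $N$, whence $\ka_{\infty}=-\infty$. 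The only non-routine step is stage one --- pinning down ``purely discrete spectrum $\Rightarrow$ face degree unbounded off every finite set''; stage two is just manipulation of the explicit curvature formula together with $M_{E}<\infty$.
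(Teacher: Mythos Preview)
Your proposal addresses only the final implication of the theorem (purely discrete spectrum together with $M_{E}<\infty$ implies $\ka_{\infty}=-\infty$); the first two assertions --- that $\ka_{\infty}=-\infty$ yields discreteness and the eigenvalue asymptotics $\lm_{n}(\Delta)\sim\lm_{n}(D_{F})$, with the sharpening under strong balancedness --- are not treated at all. These forward statements are the substance of the theorem and rely on the balancedness assumption (which you yourself note plays no role in your argument). In the paper the forward direction goes through Proposition~\ref{p:Delta}: one first observes $-|f|_{\Sigma}/2\le\ka^{(\Sigma)}(f)$, so $\ka_{\infty}=-\infty$ forces $|\partial f|\to\infty$ off finite sets; balancedness then guarantees that the quantity $a$ in Proposition~\ref{p:Delta} is positive (indeed $a\ge 1/C$, and $a=1$ in the strongly balanced case), and that proposition --- itself an application of Corollary~\ref{c:cheeger} combined with results from \cite{Kel} and \cite{BGK} --- delivers both discreteness and the eigenvalue asymptotics. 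None of this appears in your write-up.

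For the converse direction that you do prove, your argument is correct and matches the paper's approach. Your stage-two estimate $\ka^{(\Sigma)}(f)\le 1-|\partial f|/6\le 1-|f|/(6M_{E})$ is exactly the bound the paper uses, and your stage-one argument (an infinite set of faces of bounded degree produces an infinite-dimensional subspace on which the quadratic form is bounded, contradicting discreteness via min--max) is a self-contained substitute for the paper's citation of \cite[Theorem~1]{Kel}. The paper phrases the whole thing as a direct argument (assume $\ka_{\infty}>-\infty$, find a sequence $f_{n}$ with bounded degree, conclude non-discreteness) whereas you run the contrapositive, but the content is identical.
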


We like to mention that the result here holds for the {\em generally
  unbounded} discrete Laplacian. The first result on the essential
spectrum of graphs analogous to Donnelly-Li was proved by Fujiwara
\cite{Fu} and he considered the {\em normalized}
Laplacian. The very different spectral behavior of these two operators
is discussed in \cite{Kel}.

The proof of Theorem \ref{t:DiscreteSpectrum} is based on the
following proposition.

\begin{prop} \label{p:Delta} Let $X=(V,E,F)$ be a locally finite
  {\PCPS}. If
  \begin{align*}
    a := \sup_{K\subseteq F\,\mathrm{finite}} \inf_{f\in F\setminus K}
    \frac{m_{E}(f)}{M_{E}(f)} \Big(1-\frac{6}{|\partial f|}\Big) > 0,
  \end{align*}
  then the spectrum of $\Delta$ is discrete if and only if
  \begin{equation*} \label{eq:facedegreeinf}
    \sup_{K\subseteq F\,\mathrm{finite}} \inf_{f\in F\setminus K}|f| = \infty.
  \end{equation*}
  In this case,
  \begin{align*}
  (1-\sqrt{1-a^2}) \leq
  \liminf_{n\to\infty} \frac{\lm_{n}(\Delta)}{\lm_{n}(D_{F})} \leq
  \limsup_{n\to\infty} \frac{\lm_{n}(\Delta)}{\lm_{n}(D_{F})} \leq
  (1+\sqrt{1-a^2}).
  \end{align*}
\end{prop}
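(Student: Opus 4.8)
The plan is to deduce Proposition~\ref{p:Delta} from the Cheeger-type machinery already set up in Section~\ref{s:spectrum}, applied not to $X$ itself but to the complements $F \setminus K$ of finite sets. The key observation is that $a$ is precisely the quantity appearing in Corollary~\ref{c:cheeger}, so $a \le \al_\infty$; more carefully, for each finite $K$ the lower bound in Theorem~\ref{t:cheeger2} (and the proof of Corollary~\ref{c:cheeger}) gives $\al_{F \setminus K} \ge \inf_{f \in F \setminus K} \tfrac{m_E(f)}{M_E(f)}(1 - \tfrac{6}{|\partial f|})$, and taking the supremum over $K$ yields $\al_\infty \ge a > 0$. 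So the hypothesis $a>0$ guarantees a positive Cheeger constant at infinity, which is the hook for everything that follows.

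First I would establish the discreteness dichotomy. For the direction ``$\sup_K \inf_{f \notin K} |f| = \infty$ implies discrete spectrum'': one uses that $\Delta \ge \tfrac{1}{M_F(\Omega)} D_F$-type domination is too crude, so instead I would localize. Fix $\eps>0$; choose finite $K$ with $\al_{F \setminus K} \ge a - \eps$ and $\inf_{f \notin K}|f| \ge R$. Writing $\Delta = \Delta_K \oplus (\text{something})$ up to a finite-rank/bounded perturbation (decompose $\ell^2(F) = \ell^2(K) \oplus \ell^2(F \setminus K)$ and note the off-diagonal coupling is a finite-rank operator since $K$ is finite and $X$ is locally finite), it suffices to bound $\Delta$ restricted to $\ell^2(F \setminus K)$ from below by $(a-\eps$-dependent constant$)\cdot R$. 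This is exactly the standard Cheeger estimate quoted above Section~\ref{S:specdisc}, namely $\lm_0(\Delta|_{F \setminus K}) \ge m_F(F \setminus K)\bigl(1 - \sqrt{1 - \al_{F\setminus K}^2}\bigr) \ge R\bigl(1 - \sqrt{1-(a-\eps)^2}\bigr)$. Since $R$ can be taken arbitrarily large and $K$ finite, Persson's theorem (the essential spectrum is the limit of the bottoms of the Dirichlet Laplacians on complements of finite sets) gives $\inf \si_{\mathrm{ess}}(\Delta) = \infty$, i.e. empty essential spectrum. For the converse, if $\sup_K \inf_{f \notin K}|f| = N < \infty$, then there is an infinite set of faces all with degree $\le N$, and since $|f|$ bounds the number of neighbours, one extracts an infinite ``almost-disjoint'' family of unit bump functions with Rayleigh quotients bounded by $2N$, producing a Weyl sequence and hence non-discrete spectrum.

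Next I would prove the two-sided eigenvalue comparison. The upper and lower bounds come from a quantitative, $n$-dependent Cheeger estimate applied to each spectral subspace. Concretely, by the min-max principle $\lm_n(\Delta) = \inf_{\dim L = n+1} \sup_{0 \ne \ph \in L} \tfrac{\langle \Delta \ph, \ph\rangle}{\|\ph\|^2}$ and similarly for $D_F$. The inequality $\langle \Delta \ph, \ph \rangle = \tfrac12 \sum_{f \sim g}|\ph(f) - \ph(g)|^2 \le 2 \langle D_F \ph, \ph\rangle$ gives $\lm_n(\Delta) \le 2\lm_n(D_F)$ cheaply, but to get $1 \pm \sqrt{1-a^2}$ one must run the Cheeger inequality on the ``tail''. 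The mechanism is: for each finite $K$, the Cheeger estimate on $F \setminus K$ gives the operator inequality $\Delta|_{\ell^2(F\setminus K)} \ge (1 - \sqrt{1 - \al_{F \setminus K}^2})\, D_F|_{\ell^2(F \setminus K)}$ and likewise $\Delta|_{\ell^2(F \setminus K)} \le (1 + \sqrt{1 - \al_{F \setminus K}^2})\, D_F|_{\ell^2(F\setminus K)}$ (the upper Cheeger bound, valid since $\al_{F\setminus K}$ also controls the co-area from above — this is the standard two-sided form of the discrete Cheeger inequality for the unbounded Laplacian from \cite{Kel,DKe,Fu}). Combining with the min-max characterization and the fact that modifying the form on the finite-dimensional $\ell^2(K)$ shifts $\lm_n$ by at most a fixed additive constant (which is negligible since $\lm_n(D_F) \to \infty$), one passes to the limit $K \uparrow F$ to replace $\al_{F \setminus K}$ by its supremum, which is $\ge a$; squeezing gives the displayed $\liminf/\limsup$ bounds. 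The main obstacle I anticipate is the bookkeeping in this last step: making precise that the finite-rank perturbation coming from the coupling between $\ell^2(K)$ and $\ell^2(F\setminus K)$, together with the replacement of $D_F$ by $D_F|_{F \setminus K}$, perturbs the eigenvalue ratios by an $o(1)$ amount as $n \to \infty$ — this requires a Weyl-type interlacing argument and care that $\lm_n(D_F) = \lm_n(D_F|_{F \setminus K}) + O(1)$ with the $O(1)$ independent of $n$. Everything else is an application of the cited Cheeger inequalities and the min-max principle.
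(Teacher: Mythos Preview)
Your proposal is correct and follows the same route as the paper, which simply combines Corollary~\ref{c:cheeger} (giving $\al_\infty \ge a$) with the black-box results \cite[Theorem~2]{Kel} for the discreteness criterion and \cite[Theorems~2.2 and 5.3]{BGK} for the eigenvalue asymptotics; what you have written is essentially an unpacking of those citations. One remark on the obstacle you flag at the end: the relation between $\lm_n(D_F)$ and $\lm_n(D_F|_{F\setminus K})$ is an index shift by at most $|K|$ rather than an additive $O(1)$, so it is cleaner to upgrade the two-sided Cheeger estimate on $\ell^2(F\setminus K)$ to an operator inequality $(1-\sqrt{1-\al_{F\setminus K}^2})\, D_F - C_K \le \Delta \le (1+\sqrt{1-\al_{F\setminus K}^2})\, D_F + C_K$ on all of $\ell^2(F)$ with a \emph{bounded} error $C_K$ (absorbing the finitely many edges incident to $K$ and the diagonal on $K$), after which the asymptotic ratio bounds follow directly from min-max without any interlacing.
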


\begin{proof} The characterization of discreteness of spectrum follows
  from Corollary~\ref{c:cheeger} and \cite[Theorem~2]{Kel}. The
  asymptotics of eigenvalues follow combining
  Corollary~\ref{c:cheeger} and \cite[Thms.~2.2. and 5.3.]{BGK}.
\end{proof}

\begin{proof}[Proof of Theorem~\ref{t:DiscreteSpectrum}] We observe
  that for all $\Sigma\in \A$ and $f\in F_{\Sigma}$
  \begin{align*}
    -\frac{|f|_\Sigma}{2} \leq \ka^{(\Sigma)}(f).
  \end{align*}
  Hence, $\ka_{\infty}=-\infty$ implies $\sup_{K\subseteq
    F\,\mathrm{finite}} \inf_{f\in F\setminus K}|f|=\infty$.
  Combining this with the assumption that $X$ is balanced with
  constant $C$ implies that $a\ge 1/C$, where $a$ is taken from
  Proposition~\ref{p:Delta}. In the case of $X$ being strongly
  balanced we have $a=1$. Thus, the first part of the theorem follows
  from Proposition~\ref{p:Delta}. Conversely, if there is $c>0$ such that
  $\ka_{\infty} \ge -c > -\infty$ then there is a sequence of faces
  $f_{n}$ with $d(f,f_n) \to \infty$ for any fixed face $f \in F$ and
  apartments $\Sigma_{n}$, $n \ge 0$, such that
  \begin{align*}
    -c < \ka^{(\Sigma_{n})}(f_{n}) \leq 1 - \frac{|f|_\Sigma}{6}
    \leq 1 - \frac{|f|}{6M_{E}},
  \end{align*}
  where we used ${|v|}_{\Sigma}\ge3$ which holds as $\Sigma$ is a tessellation. We conclude
  that $|f_{n}|$ is uniformly bounded by some constant $c'>0$.  Thus,
  the essential spectrum of $\Delta$ starts below $c'$ (confer
  \cite[Theorem~1]{Kel}) and $\Delta$ does not have purely discrete
  spectrum.
\end{proof}

\begin{example}
  The simplest example of a {\PCPS} satisfying the conditions of Theorem
  \ref{t:DiscreteSpectrum} is a planar tessellation $X=(V,E,F)$ with
  one apartment $\Sigma=X$ and center $o \in F$ such that $\lim_{n \to
    \infty} \inf_{f \in S_n} |\partial f| = \infty$. In this case we
  have
  $$ \kappa^{(\Sigma)}(f) \le 1 - \frac{|\partial f|}{6}, $$
  and we see that $\kappa_\infty = - \infty$. Moreover, $X$ is
  strongly balanced since we have $m_E(f) = M_E(f) = 1$. Therefore,
  the spectrum of $\Delta$ is purely discrete and
  $\lambda_n(\Delta)/\lambda_n(D_F) \to 1$.

  Note that purely discrete spectrum can also be established by
  increasing $m_E(f)$ instead of $|\partial f|$ for all faces outside
  compact sets (by keeping the polygonal complex balanced) and
  applying Proposition \ref{p:Delta} directly. The condition
    $\sup_{K\subseteq F\,\mathrm{finite}} \inf_{f\in F\setminus K}|f| = \infty$
follows then directly from $|f| \ge
  m_E(f)$.
\end{example}


\subsection{Unique continuation of eigenfunctions}

While unique continuation results hold in great generality for
continuum models with very mild assumptions, there are very natural
examples for graphs with finitely supported eigenfunctions, see
\cite{DLMSY} and various other references. In this subsection we prove
that for non-positive curvature there are no finitely supported
eigenfunctions.

\begin{thm} \label{t:uniqueContinuation}
  Let $X=(V,E,F)$ be a {\em locally finite} {\PCPS} such that
  $\ka_{c}^{(\Sigma)} \leq 0$ for all $\Sigma \in {\mathcal A}$. Then,
  $\Delta$ does not admit finitely supported eigenfunctions.
\end{thm}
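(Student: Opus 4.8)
The plan is to argue by contradiction using the maximum principle along a geodesic, exploiting the structure of distance spheres inside a single apartment. Suppose $\ph \in \ell^2(F)$ is a nonzero eigenfunction of $\Delta$ with eigenvalue $\lm$ and finite support $\mathrm{supp}(\ph)$. First I would pick a face $o$ at which $|\ph|$ attains its maximum over $\mathrm{supp}(\ph)$ and, among all faces where the supremum is attained, choose $o$ to be at maximal combinatorial distance from some fixed reference face outside the support. The key point is then to locate a face $f$ in $\mathrm{supp}(\ph)$ with $d(o,f)$ maximal among the support; such an $f$ is a ``boundary'' face of the support in the sense that $\ph$ vanishes on every forward neighbor of $f$ (with respect to the center $o$), and indeed on every neighbor $g$ of $f$ with $d(o,g) = d(o,f)+1$.

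Next I would write out the eigenvalue equation $\Delta\ph(f) = \lm\ph(f)$, i.e. $\sum_{g\sim f}(\ph(f)-\ph(g)) = \lm \ph(f)$, and split the neighbors of $f$ into backward, sideways, and forward neighbors relative to $o$. Here Proposition~\ref{prop:bigon} (valid since $\ka_c^{(\Sigma)} \le 0$) gives that $f$ has at most two backward neighbors, and Lemma~\ref{l:neighbors} together with Theorem~\ref{t:Cut_locus} controls the sideways neighbors and guarantees that $f$ has at least one forward neighbor — on all of which $\ph$ vanishes by the maximality of $d(o,f)$. The idea is that the forward neighbors ``leak'' too much: the value $\ph(f)$ is being pushed toward zero by the vanishing forward neighbors, while the backward and sideways contributions cannot compensate because of the curvature-controlled bound on their number relative to the total degree $|f|$. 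Combined with $|f|_\Sigma = |\partial f| \ge 3$ and the fact that the apartment is a genuine tessellation, I expect to derive $\ph(f) = 0$, contradicting $f \in \mathrm{supp}(\ph)$.

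The main obstacle is making the maximum-principle step genuinely work: a single eigenfunction need not satisfy a pointwise maximum principle for arbitrary $\lm$, so one really has to use the geometry — specifically, that inside an apartment containing $o$ and $f$ the local picture around $f$ is that of a planar tessellation with non-positive corner curvature, which forces $f$ to have strictly more forward than backward neighbors (this is exactly the mechanism behind Theorem~\ref{t:cheeger2}, where $|f|_+ \ge m_E(f)$ and $|f|_- \le 2$). The careful bookkeeping is to show that, after subtracting the vanishing forward terms, the eigenvalue equation at the outermost support face forces $\ph(f)$ to equal a convex-type combination of strictly fewer neighboring values than $|f|$, which is impossible at a genuine maximum of $|\ph|$ unless $\ph(f)=0$. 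An alternative, perhaps cleaner, route is to induct on distance spheres from a face just outside the support: using Lemma~\ref{l:neighbors} one propagates the vanishing of $\ph$ from $S_{n+1}$ back into $S_n$ via the eigenvalue equation, and the existence of forward neighbors (Corollary after Theorem~\ref{t:Cut_locus}) together with the two-backward-neighbor bound keeps the induction going until one reaches $o$ itself, forcing $\ph \equiv 0$.
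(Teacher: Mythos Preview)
Your first approach --- evaluating the eigenvalue equation at a face $f$ inside the support and invoking a maximum principle --- does not work, and you correctly sense this yourself: for a general eigenvalue $\lm$ there is simply no pointwise inequality available, so no amount of bookkeeping on $|f|_+$ versus $|f|_-$ will produce a contradiction at $f$.

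Your alternative route at the end is the right direction, and is essentially how the paper proceeds: one evaluates the eigenvalue equation not at $f\in S_{k-1}$ but at a forward neighbor $g_0 \in S_k$ (which exists by Theorem~\ref{t:Cut_locus}). Since $\ph$ vanishes on $S_j$ for all $j\ge k$, the equation $\Delta\ph(g_0)=\lm\ph(g_0)$ reduces to $\sum_{h\sim g_0,\,h\in S_{k-1}}\ph(h)=0$. If $g_0$ has a unique backward neighbor $f_0$, this gives $\ph(f_0)=0$ immediately. The genuine gap in your proposal is the other case: when $g_0$ has \emph{two} backward neighbors $f_0,f_1$, you only obtain $\ph(f_0)+\ph(f_1)=0$, and the ``two-backward-neighbor bound'' by itself does nothing further. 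The paper resolves this by a structural argument inside the apartment: the vertex where $f_0,f_1,g_0$ meet carries the label~$b$ in the sense of \cite[Section~2.2]{BP2}, and by \cite[Cor.~2.7]{BP2} its neighbor along the boundary of $B_{k-1}$ has label~$a^+$, which forces $f_1$ to have another forward neighbor $g_1\in S_k$. Iterating produces a closed boundary walk with alternating labels $b,a^+,b,a^+,\dots$, which is shown to be geometrically impossible in \cite[Prop.~13]{KLPS}. This is the missing mechanism; without it the induction from $S_k$ back to $S_{k-1}$ stalls whenever two backward neighbors occur.
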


Cases where we do not have finite supported eigenfunctions are
therefore Example \ref{ex:vdov} and Examples
\ref{ex:prodtrees}-\ref{ex:hag}.

In \cite{KLPS,Kel2} results like Theorem~\ref{t:uniqueContinuation} are
found for the planar case and more general operators. Indeed, we
consider here also nearest neighbor operators, where we even do not have to assume local finiteness.

\begin{dfn}
  Let $X=(V,E,F)$ be a polygonal complex. We call $A$ a \emph{nearest
    neighbor operator} on $X$ if there is $a:F\times F\to \C$
  \begin{itemize}
    \item [(NNO1)] $a(f,g)\neq 0$ if $f\sim g$.
    \item [(NNO2)] $a(f,g)=0$ if $f\not\sim g$.
    \item [(NNO3)] $\sum_{g\in F} |a(f,g)|<\infty$ for all $f\in F$.
  \end{itemize}
  and $A$ acts as
  \begin{align*}
    A\ph(f)=\sum_{g\in F}a(f,g)\ph(g),
  \end{align*}
  on functions $\ph$ in
  \begin{align*}
    \ow D(A)=\{\ph: F \to \C \mid
    \sum_{g\in F} |a(f,g)\ph(g) | < \infty \quad \forall \, f \in F\}.
  \end{align*}
\end{dfn}

The summability assumption (NNO3) guarantees that the functions of
finite support are included in $\ow D(A)$. Clearly, the Laplacian
introduced at the beginning of this section is a nearest neighbor operator,
where we can also add an arbitrary potential to be in the general
setting of Schr\"odinger operators. Theorem~\ref{t:uniqueContinuation}
is an immediate consequence of the following theorem.

\begin{thm} \label{t:uniqueContinuation2}
  Let $X=(V,E,F)$ be a {\PCPS} such that $\ka_{c}^{(\Sigma)} \leq 0$ for
  all $\Sigma \in {\mathcal A}$ and $A$ be a nearest neighbor operator on
  $X$. Then $A$ does not admit eigenfunctions supported within a   distance ball.
\end{thm}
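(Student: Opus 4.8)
The plan is to argue by contradiction: suppose $A\varphi = \lambda\varphi$ for some $\varphi \not\equiv 0$ whose support is contained in a distance ball $B_n(o)$ around some face $o \in F$. Among all faces in the support of $\varphi$, pick one, say $f^*$, of maximal distance $N = d(f^*,o)$ from the center $o$; thus $\varphi$ vanishes identically on $S_{N+1} \cup S_{N+2} \cup \dots$ and $f^* \in S_N$ with $\varphi(f^*) \neq 0$. The strategy is to evaluate the eigenvalue equation at a suitable forward neighbor $g$ of $f^*$ and exploit the combinatorial structure of distance spheres forced by convexity (Lemma~\ref{l:neighbors}) to show that $\varphi(f^*)$ must vanish after all.

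First I would fix an apartment $\Sigma$ containing $o$ and $f^*$, using (PCPS1); by Lemma~\ref{l:d} distances in $\Sigma$ agree with distances in $X$. By Theorem~\ref{t:Cut_locus} (applicable since $\ka_c^{(\Sigma)} \le 0$) the face $f^*$ has a forward neighbor $g \in S_{N+1}$, which we may take inside $\Sigma$. Now apply $A$ at $g$: since $\varphi$ is supported in $B_N$, the equation $\lambda\varphi(g) = \sum_{h \sim g} a(g,h)\varphi(h)$ reads $0 = \sum_{h \sim g,\ h \in S_N} a(g,h)\varphi(h)$, i.e. only the backward neighbors of $g$ (which all lie in $S_N$, by Lemma~\ref{l:neighbors}, since $g \in S_{N+1}$) contribute. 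If $g$ had $f^*$ as its \emph{only} neighbor in $S_N$, we would immediately get $a(g,f^*)\varphi(f^*) = 0$, hence $\varphi(f^*) = 0$ by (NNO1), a contradiction. The crux is therefore to produce such a forward neighbor $g$ of $f^*$ that ``sees'' $f^*$ as its unique backward neighbor, and here the non-positivity of the sectional corner curvature enters.

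The main obstacle is this combinatorial/geometric step: one must show that, when $\ka_c^{(\Sigma)} \le 0$ throughout, there is a forward neighbor $g$ of $f^*$ in $\Sigma$ with exactly one backward neighbor. This is precisely the kind of statement about admissible polygons and the structure of $B_N \cap \Sigma$ inside a planar tessellation that is established by the techniques of \cite{BP2} (compare Proposition~\ref{prop:bigon}, which bounds backward neighbors by $2$): non-positive corner curvature forces the distance spheres to be ``expanding'' enough that some forward direction from $f^*$ is not shared. I would isolate this as a lemma in the apartment $\Sigma$ (a planar tessellation with $\ka_c \le 0$), proving it via the Gauß--Bonnet/angle-defect bookkeeping of \cite{BP1,BP2}: if \emph{every} forward neighbor of $f^*$ had a second backward neighbor, summing the corner-curvature inequalities around $f^*$ and its forward neighbors would contradict $\ka_c^{(\Sigma)} \le 0$. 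Once this lemma is in hand, the contradiction with $\varphi(f^*) \neq 0$ is immediate by the evaluation above, and since the center $o$ and the radius $n$ were arbitrary, $A$ admits no eigenfunction supported in any distance ball. Note that local finiteness is not needed: (NNO3) ensures all sums converge and that finitely supported functions lie in $\widetilde D(A)$, so the argument applies verbatim.
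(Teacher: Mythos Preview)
Your reduction is sound up to the point where you assert the key lemma: that every face $f^*\in S_N$ in an apartment with $\ka_c^{(\Sigma)}\le 0$ has a forward neighbor $g$ whose \emph{only} backward neighbor is $f^*$. This lemma is false, and the vague Gau\ss--Bonnet argument you sketch cannot salvage it. A concrete counterexample is the square tessellation of $\R^2$ (which has $\ka_c\equiv 0$): take $o$ to be the unit square $[0,1]^2$ and $f^*=[1,2]\times[1,2]\in S_2$. Then $f^*$ has exactly two forward neighbors, $[2,3]\times[1,2]$ and $[1,2]\times[2,3]$, and \emph{each} of them has two backward neighbors (namely $f^*$ together with $[2,3]\times[0,1]$, respectively $[0,1]\times[2,3]$). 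So no forward neighbor of $f^*$ sees $f^*$ uniquely, and your contradiction never fires.

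The paper's proof does not rely on such a lemma. Instead, after evaluating the eigenvalue equation at the forward neighbor $g_0$ of $f_0$ and finding a second backward neighbor $f_1$, it records the relation $a(g_0,f_0)\ph(f_0)=-a(g_0,f_1)\ph(f_1)$ and then \emph{walks along the boundary of $B_{k-1}\cap\Sigma$}: using the $a^+/b$ vertex labeling of \cite{BP2}, the vertex between $f_0,f_1,g_0$ has label $b$ and its neighbor in $\overline{f_1}\cap\overline{g_0}$ has label $a^+$, forcing $f_1$ to have another forward neighbor $g_1$; iterating produces a closed boundary walk with alternating labels $b,a^+,b,a^+,\ldots$, which is ruled out by \cite[Prop.~13]{KLPS}. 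This walk-around argument is exactly what handles the situation your lemma was meant to preclude. To repair your plan you would need to replace the false ``unique backward neighbor'' lemma by this propagation argument along the sphere.
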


\begin{proof}
  Let $\ph\in \ow D(A)$ be an eigenfunction of $A$ to the eigenvalue
  $\lm$.  Let $k$ be such that $\ph$ vanishes completely on all
  distance spheres at levels larger or equal than $k$ from a center $o
  \in F$. Let $f_{0} \in F$ be a face at distance $k-1$. We want to
  show that $\ph(f_{0}) = 0$. Let $\Sigma$ be an apartment containing
  $o$ and $f_{0}$. Since we do not have cut-locus in any of the
  apartments due to non-positive sectional corner curvature, cf.\@
  Theorem~\ref{t:Cut_locus}, there exists a face $g_{0}\in F_{\Sigma}$
  adjacent to $f_{0}$ with $d(o,g_{0}) = k$.  By assumption, we have
  $\ph(g_{0}) = 0$. Now, by convexity, all faces $f \in F$ with
  $d(f,o) = k-1$ adjacent to $g_{0}$ lie within $\Sigma$.  By
  Proposition~\ref{prop:bigon} there can be at most two such faces,
  one of them equal to $f_{0}$. If there is only one such face, namely
  $f_{0}$, we conclude from the eigenfunction identity evaluated at
  $g_{0}$ that we have $\ph(f_{0}) = 0$.  If there are two such faces,
  say $f_{0},f_{1}$, then we conclude from the eigenfunction identity
  evaluated at $g_{0}$ that $a(g_{0},f_{0})\ph(f_{0}) = -
  a(g_{0},f_{1}) \ph(f_{1})$.  With the notation of
  \cite[Section~2.2]{BP2} the vertex $v_{0}$ in the intersection of
  $\overline{f_{0}}, \overline{f_{1}}$ and $\overline{g_{0}}$ has
  label $b$ with respect to the tessellation $\Sigma$ (label $b$ means
  that there is more than one adjacent face to $v_{0}$ within
  $B_{k-1}$ or if one of the faces adjacent to $v_{0}$ is a triangle
  then there are even more than three adjacent faces in $B_{k-1}$;
  however, the case that $v_{0}$ has a neighboring triangle can be
  excluded by $\ka_{c}^{(\Sigma)} \leq 0$). The vertex $v_{0}$ has two
  neighbors in the boundary of $B_{k-1}$ in $\Sigma$.  One of these
  neighbors is in the intersection of $\overline{f_{0}}\cap
  \overline{g_{0}}$ and the other one which we denote by $v_{1}$ is in
  the intersection of $\overline{f_{1}}\cap \overline{g_{0}}$. By
  \cite[Cor.~2.7.]{BP2} the vertex $v_{1}$ has label $a^{+}$ (which
  means that $v_{1}$ has only one adjacent face within $B_{k-1}$).
  This implies that the face $f_{1}$ has another neighbor $g_{1}$ in
  $S_{k}$. By assumption $\ph(g_{1})=0$ and applying the same
  arguments to $g_{1}$ we find $f_{2}\in S_{k-1}\cap F_{\Sigma}$,
  $f_{2}\sim g_{1}$ such that $a(g_{1},f_{1})\ph(f_{1}) = -
  a(g_{1},f_{2}) \ph(f_{2})$. Proceeding inductively we find the
  sequences $(f_{0},\ldots,f_{n})$, $f_{0}=f_{n}$, and
  $(g_{0},\ldots,g_{n})$, $g_{0}=g_{n}$ of faces in $\Sigma$ that form
  a closed boundary walk and boundary vertices
  $(v_{0},\ldots,v_{2n})$, $v_{0}=v_{2n}$, with labels
  $b,a^{+},b,a^{+},b,\ldots$. However, this is geometrically
  impossible \cite[Prop.~13]{KLPS}.  Hence, we conclude
  $\ph(f_{0})=0$. As this argument applies for all faces in $S_{k-1}$
  we deduce that $\ph$ vanishes on $S_{k-1}$. Repeating this argument
  for $S_{k-j}$, $j=2,\ldots,k$, yields that $\ph$ vanishes on $B_{k}$
  and thus by assumption on $F$. We finished the proof.
\end{proof}

We conclude this subsection by giving examples of tessellations with
negative sectional face curvature that admit finitely supported
eigenfunctions. This shows the assumption in the theorem cannot be
modified to negative sectional face curvature instead of non-positive
sectional corner curvature.

\begin{example} Let $\Sigma_{n}$, $n\ge3$, be a bipartite tessellation
  of the plane $\R^2$ with squares as follows. There are
  two infinite sets of vertices $V_{1}$ and $V_{2}$, where the
  vertices in $V_{1}$ have degree $2n$ and the vertices in $V_{2}$
  have degree $3$. The tessellation $\Sigma_{n}$ is now given such
  that vertices in $V_{1}$ are only connected to vertices in $V_{2}$
  and vice versa.  Hence, each face contains two vertices of $V_{1}$
  and two of $V_{2}$. See Figure \ref{Sigma4} for the tessellation
  $\Sigma_4$, realized in the hyperbolic Poincar{\'e} unit disk.

  \begin{figure}[h]
    \begin{center}
      \psfrag{0}{$0$}
      \psfrag{1}{$1$}
      \psfrag{-1}{$-1$}
\scalebox{.25}{\includegraphics{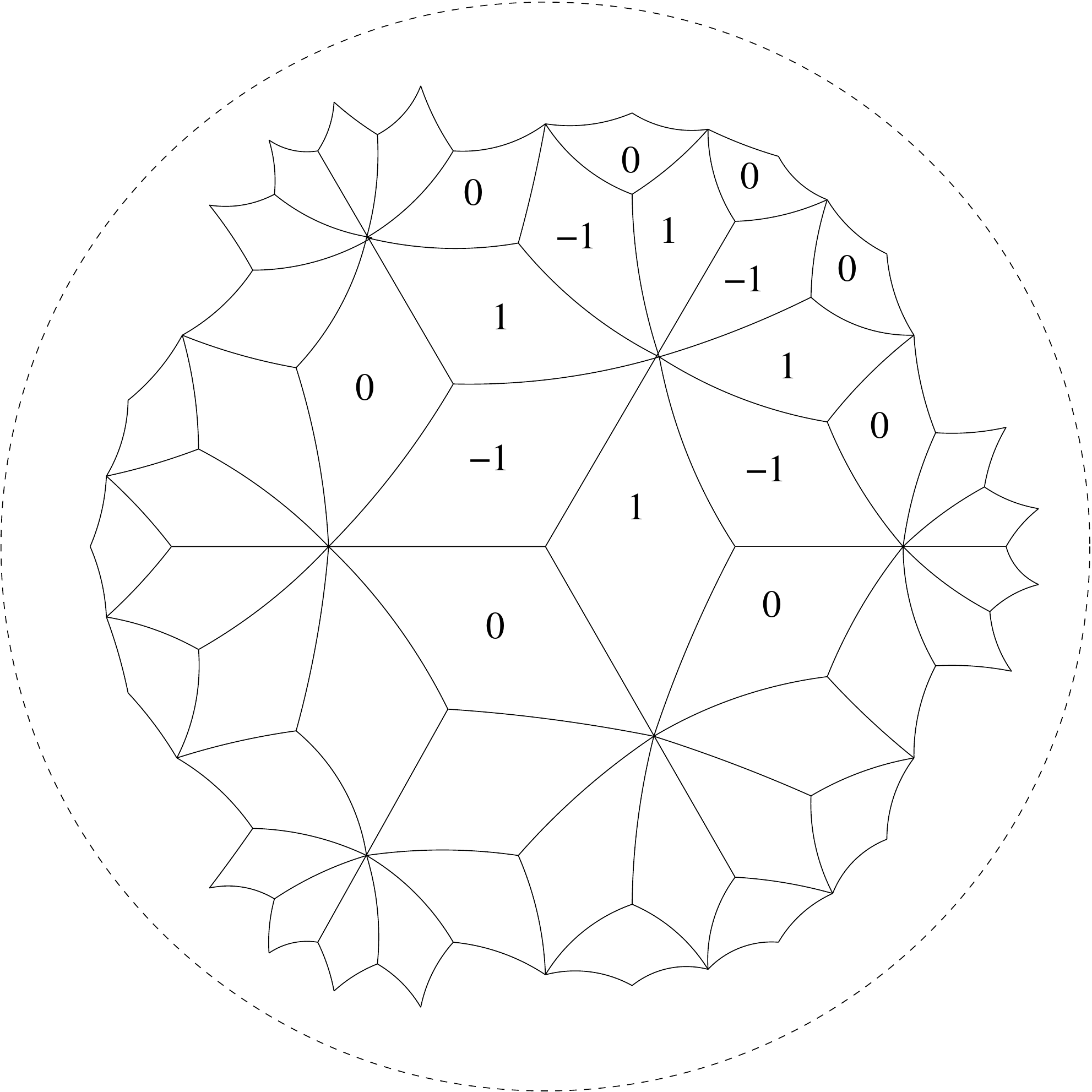}}
    \end{center}
    \caption{Part of the tessellation $\Sigma_{4}$ with a finitely supported eigenfunction which is zero at the faces with no entry.}
    \label{Sigma4}
  \end{figure}

  The face curvature is then given by
  \begin{align*}
    \ka(f) = 1 -\frac{|f|}{2} +
    \sum_{v \in f} \frac{1}{|v|} = 1 - 2 + \frac{2}{3}+\frac{2}{2n}=
    -\frac{n-3}{3n}.
  \end{align*}
  For $n>3$ the face curvature is negative and in the interval
  $(-1/3,-1/12)$. On the other hand, we have for the corner curvatures
  $$ \ka_c(v_1,f) = - \frac{n-2}{4n}, \quad \ka_c(v_2,f) = \frac{1}{12} > 0, $$
  with $v_1 \in V_1$ and $v_2 \in V_2$ and $v_1,v_2 \in
  \overline{f}$. Moreover, for a vertex with degree $2n$ let
  $F_{0}=\{f_{1},\ldots,f_{2n}\}$ be the faces around it in cyclic
  order. Let a function $\ph$ with support in $F_{0}$ be given such
  that $\ph(f_{2j})=1$ and $\ph(f_{2j-1})=-1$ for $j=1,\ldots,
  n$. Then, $\ph$ is a finitely supported eigenfunction of $\Delta$ to
  the eigenvalue $6$. Looking at the dual regular graph ${\Sigma_n}^*$
  with constant vertex degree $4$, we see that the
  $\Delta$-eigenfunction $\varphi$ of $\Sigma_n$ corresponds to an
  eigenvector of the adjacency matrix of ${\Sigma_n}^*$ to the
  eigenvalue $-2$.
\end{example}


\subsection{The Dirichlet problem at infinity}

We assume that $X=(V,E,F)$ is a {\PCPS} with strictly negative sectional
corner curvature and that (${\rm PCPS1}^*$) holds. Moreover, we assume
$M_V, M_F < \infty$. Then we know from Theorem \ref{t:Gromov} that
$(X,d)$ is Gromov hyperbolic and that the boundary $X(\infty)$ carries
a natural topological structure. Moreover, $\overline{X} = X \cup
X(\infty)$ is compact (see \cite[Prop. III.H.3.7(4)]{BH}). Given a
function $U\in C(X(\infty))$, the {\em Dirichlet problem at infinity}
asks whether there is a unique continuous function $u \in
C(\overline{X})$ which agrees with $U$ on $X(\infty)$ and such that
the restriction $u_0 = u\vert_{X}$ is harmonic (i.e., $\Delta u =
0$). The existence of such a function $u$ is the main problem since uniqueness of the solution follows from the maximum principle. Applying the general theory of \cite{Anc} to Theorem~\ref{t:cheeger1} answers this question positively.

\begin{thm} \label{thm:Dirichlet} Let $X=(V,E,F)$ be a {\PCPS} such that
  $\kappa_c^{(\Sigma)} < 0$ for all $\Sigma \in {\mathcal A}$. Assume
  that $X$ additionally satisfies (${\it PCPS1}^*$) and $M_{V},
  M_{F}<\infty$. Then $(X,d)$ is Gromov hyperbolic and the Dirichlet problem at infinity is solvable on $X$. In particular,   there are infinitely many linearly independent bounded non-constant   harmonic functions on $X$.
\end{thm}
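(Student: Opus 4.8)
The plan is to reduce the statement to a known theorem of Ancona on the Dirichlet problem at infinity for Gromov hyperbolic graphs with bounded geometry and a spectral gap. The Gromov hyperbolicity of $(X,d)$ has already been established in Theorem~\ref{t:Gromov}, using the standing assumptions $M_V, M_F < \infty$, strictly negative sectional corner curvature, and (${\rm PCPS1}^*$). So the first step is simply to invoke Theorem~\ref{t:Gromov} to conclude that $G_X$ is a proper, Gromov hyperbolic geodesic space with boundary $X(\infty)$, and that $\overline X = X \cup X(\infty)$ is compact by \cite[Prop.~III.H.3.7(4)]{BH}. Next, I would use Theorem~\ref{t:cheeger1} to obtain $\alpha > 0$, and then translate this via the standard Cheeger estimate $\lambda_0(\Delta) \ge m_F(1 - \sqrt{1-\alpha^2}) > 0$ into a positive bottom of the spectrum of $\Delta$ on $G_X$ (equivalently, $\Delta$ satisfies a strong isoperimetric/positive spectral gap condition).

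The second step is to check that the hypotheses of Ancona's theorem \cite{Anc} are met by the graph $G_X$. Ancona's framework requires a graph of bounded degree (here the degree of a vertex of $G_X$ is the face degree $|f| \le M_F < \infty$, so this holds), Gromov hyperbolicity (just established), and a uniform positivity condition for the associated random walk / Laplacian — which is exactly the spectral gap $\lambda_0(\Delta) > 0$ obtained above. Under these assumptions Ancona's results give that $\Delta$-harmonic functions (equivalently $\lambda$-harmonic functions for $\lambda$ in the relevant range) on $G_X$ are governed by a Martin boundary that can be identified with the Gromov boundary $X(\infty)$, and in particular that every continuous boundary datum $U \in C(X(\infty))$ extends to a unique $u \in C(\overline X)$ with $u|_{X(\infty)} = U$ and $\Delta u = 0$ on $X$. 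I would cite the precise theorem from \cite{Anc} (the solvability of the Dirichlet problem at infinity for hyperbolic graphs with a spectral gap) and note that the passage from the polygonal complex $X$ to the graph $G_X$ is harmless, since by the comment at the end of Subsection~\ref{s:PCPS} the operator $\Delta$ on $\ell^2(F)$ is precisely the graph Laplacian on $G_X$, and geodesics and boundary of $X$ are those of $G_X$.

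Finally, to obtain the last sentence — existence of infinitely many linearly independent bounded non-constant harmonic functions — I would observe that $X(\infty)$ is an infinite compact Hausdorff space (indeed, by Theorem~\ref{t:Gromov} it contains the boundary circle $S^1$ of an apartment, hence is infinite and not a single point), so $C(X(\infty))$ is infinite-dimensional. Picking any infinite linearly independent family $U_1, U_2, \dots \in C(X(\infty))$ and solving the Dirichlet problem for each $U_i$ yields bounded harmonic functions $u_i \in C(\overline X)$; linear independence of the $u_i$ follows from that of their boundary traces $U_i$ (since the boundary trace map is linear and, by uniqueness in the Dirichlet problem, injective on the space of bounded harmonic functions), and non-constancy is arranged by choosing the $U_i$ non-constant.

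I expect the main obstacle to be purely bookkeeping: matching the precise hypotheses and conclusions of Ancona's theorem in \cite{Anc} (which is typically stated for Markov chains or discrete-time random walks on graphs, with a ``Harnack-type'' or ``$\phi$-Green'' condition and a strong hyperbolicity/coarse-geometry assumption) to the continuous-spectrum, unbounded-Laplacian setting of $G_X$, and verifying that the spectral gap $\lambda_0(\Delta) > 0$ together with bounded geometry is exactly what his condition requires. In particular one should be slightly careful that $\Delta$ here is the (possibly unbounded) combinatorial Laplacian rather than a normalized one; but since $M_F < \infty$ the face degree is bounded, so $\Delta$ is a bounded operator and comparable to the normalized Laplacian, and Ancona's theory applies without modification. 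No genuinely new analytic input beyond Theorems~\ref{t:Gromov} and \ref{t:cheeger1} and the cited work of Ancona is needed.
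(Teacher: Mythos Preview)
Your proposal is correct and follows essentially the same route as the paper: Gromov hyperbolicity from Theorem~\ref{t:Gromov}, positivity of the Cheeger constant from Theorem~\ref{t:cheeger1}, and then Ancona's theory \cite{Anc} to identify the Martin boundary with the Gromov boundary and solve the Dirichlet problem. The only cosmetic difference is that the paper carries out the bookkeeping you anticipated by passing explicitly to the averaging operator $P\varphi(f)=\tfrac{1}{|f|}\sum_{g\sim f}\varphi(g)$ (noting $\Delta\varphi=0 \Leftrightarrow P\varphi=\varphi$) and citing the specific results \cite[Prop.~4.4, Cor.~6.10, Thm.~5.2, Cor.~5.4]{Anc}, rather than going through the spectral gap $\lambda_0(\Delta)>0$.
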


For spaces, where the theorem is applicable and the Dirichlet problem at
infinity can be solved, are all locally finite $2$-dimensional hyperbolic buildings
with regular hyperbolic polygons as faces.

\begin{proof}
  Gromov hyperbolicity of $(X,d)$ follows from Theorem \ref{t:Gromov}.
  Let $P$ denote the averaging operator $P\varphi(f) = \frac{1}{|f|}
  \sum_{g \sim f}\varphi(g)$. It is easy to see that $P$ satisfies the
  properties of \cite[Assumptions 1.1]{Anc}. Note further that a
  function $\varphi$ on $F$ satisfies $\Delta \varphi = 0$ if and only
  if $P \varphi = \varphi$. We know from Theorem \ref{t:cheeger1} that
  the Cheeger constant $\alpha$ of $X$ is positive. We conclude from
  \cite[Prop. 4.4]{Anc} that the crucial condition (*) in \cite{Anc}
  is therefore satisfied. Moreover, we deduce from Gromov
  hyperbolicity of $(X,d)$ and \cite[Cor. 6.10]{Anc} that the
  assumptions (G.A) in \cite[Theorem~5.2]{Anc} are satisfied and the
  Gromov compactification agrees with the $P$-Martin
    compactification of $X$.  Then the statement follows from
  \cite[Cor. 5.4]{Anc}.
\end{proof}

\begin{rmk}
  We have already mentioned in the proof of Theorem \ref{thm:Dirichlet}
  that Ancona's theory also implies that the Gromov and the geodesic
  boundary of $(X,d)$ agrees with the {\em $P$-Martin boundary}
  boundary. The $P$-Martin boundary is an analytically defined
  boundary based on asymptotic properties of Green's functions $G: F
  \times F \to [0,\infty)$ (see \cite[Section V]{Anc}).
\end{rmk}

\section{Examples}\label{s:examples}

In this section, we will mainly focus on non-positively curved
{\PCPSs}. Rich classes of examples are provided by $2$-dimensional
Euclidean and hyperbolic buildings. Before we consider these classes
more closely, let us start with particularly simple examples of
non-buildings.

\subsection{Simple examples and basic notions}

As mentioned earlier, every planar tessellation $\Sigma = (V,E,F)$ is
trivially a {\PCPS} with just one apartment, i.e., ${\mathcal A} = \{
\Sigma \}$.

Next, let us introduce {\em morphisms} between two complexes $X_1$ and
$X_2$: These are continuous maps from $X_1$ to $X_2$ mapping $k$-cells
of $X_1$ homeomorphically to $k$-cells of $X_2$, for all $k$. A morphism $f: X_1 \to X_2$ is an {\em isomorphism} if both $f$ and
$f^{-1}$ are morphisms. In this case we call $X_1$ and $X_2$
isomorphic complexes.

\begin{example}[``Book'']
Let ${\mathcal H} = (V,E,F)$ be the tessellation of the upper half
space $\{ (x,y) \in \R^2 \mid y \ge 0 \}$ where
$$ V = \{ (x,y) \in \Z^2 \mid y \ge 0 \}, $$
$E$ is the set of horizontal and vertical straight Euclidean line
segments of length 1 connecting two vertices of $V$, and $F$ is the
set of all Euclidean unit squares with vertices in $V$. Let $k\ge 2$ be an
integer and $X_k$ be the polygonal complex obtained by taking
$k$ copies of ${\mathcal H}$ and identifying them along their
boundaries $\R \times \{0\} \subset {\mathcal H}$. We can think of
$X_k$ as a book with the copies of ${\mathcal H}$ as its pages. Note
that the union of any two pages can be understood as a tessellation of
the plane by squares. Every such choice represents an apartment of the
{\PCPS} $X_k$. It is straightforward to see that $X_k$ has non-positive
sectional corner curvature. Books can also be obtained by combining
pages with more general and different polygonal structures by using
isomorphisms between their boundaries (considered as $1$-dimensional
cell complexes). Moreover, it is also possible to consider books with
{\em infinitely many} pages. They are obviously non-locally finite
{\PCPSs}.
\end{example}

\begin{example} Let us present an example of polygonal complexes that
  have no planar substructures satisfying (PCPS1) and (PCPS2). Let
  $X=(V,E,F)$ be given by $V=\Z^3$, $E$ be the set of straight
  Euclidean line segments of length 1 connecting two vertices of $V$,
  and $F$ be the set of all unit squares with vertices in $V$. The
  triple $X$ is obviously a polygonal complex, but there does not
  exist a choice of apartments (planes tessellated by squares)
  satisfying both conditions (PCPS1) and (PCPS2). The set of all
  planes parallel to the coordinate planes does not satisfy
  (PCPS1). Thus, we also need to declare certain topological planes
  which are bent to be apartments. But it is easy to see that the
  convexity property (PCPS2) is violated for any such bent plane.
\end{example}

Next, we come to two important notions in the local combinatorial description of polygonal complexes. Our purpose is to use these notions later to define certain buildings in the next sections.

\begin{dfn}[Link] \label{dfn:link}
  Let $X = (V,E,F)$ be a polygonal complex. The \emph{link} $L(v)$ of a vertex $v \in V$ is a graph defined as  follows: Every   edge adjacent to $v$ is represented by a vertex in $L(v)$, and two   vertices $w_1,w_2$ in $L(v)$ are connected by an edge in $L(v)$ if   the edges in $X$ corresponding to $w_1,w_2$ are edges of a face $f$   in $F$.
\end{dfn}

As an easy example one finds that the link of a vertex of degree $d$ in a planar tessellation is a $d$-gon.  Similarly, on finds that the link of a vertex in  $\Z^{3}$ is an octahedron.

Furthermore, polygonal complexes are often described via the type of their faces and the graphs appearing as links. It is proven in \cite[Theorem~ 1]{BB1} that for given
$p \ge 6$, $n \ge 3$ there is a continuum of non-isomorphic simply connected polygonal complexes such that faces are $p$-gons and the links  of all vertices are
the $1$-skeletons of an $n$-simplex.

Next, we give the definition of generalized $m$-gons that appear as links of Euclidean and hyperbolic buildings which are introduced in the next section.

\begin{dfn}[Generalized $m$-gon] Let $m \ge 2$ be an integer. A {\em
  generalized $m$-gon} is a connected bipartite graph of diameter $m$
and of girth $2m$ such that each vertex has degree $\ge 2$.
\end{dfn}

Next to ordinary $2m$-gons, important examples of generalized $m$-gons are the Heawood graph ($m=3$) and complete bipartite graphs ($m=2$). As it shall be discussed in the next sections, they appear as examples of links of vertices of buildings.

Let us make another remark to stress the relevance of these notions. The adjacency matrices of regular generalized $m$-gons have interesting spectral properties. In particular, they are Ramanujan
graphs (see \cite[Section 8.3]{Lub}). Spectral properties of the links  of vertices of $2$-dimensional simplicial complexes were also very useful to obtain Kazdhan property (T) for groups acting cocompactly in these complexes (see \cite{BaSw}).

\subsection{Euclidean and hyperbolic buildings}

Let us give a quick introduction into $2$-dimensional Euclidean and
hyperbolic buildings, following essentially \cite{GP}. In contrast to
our Definition \ref{dfn:polycomp}, the cells in the polygonal
complexes used for Coxeter complexes and buildings have an additional
metric structure, namely, the $1$-cells are open Euclidean or
hyperbolic geodesic segments and the $2$-cells are Euclidean or
hyperbolic polygons (we restrict our considerations to compact ones),
and the attaching maps are isometries (see also
\cite[Sct. I.7.37]{BH}). We call an {\em isometric isomorphism}
between two polygonal complexes an {\em isometry}, for simplicity. The
closures of the $2$-cells are called {\em chambers} of the polygonal
complex.

Important {\em planar} polygonal complexes are {Coxeter
  complexes}, which we introduce first (for more details see, e.g.,
\cite{Hum}). Let ${\mathbb X}$ stand for either the Euclidean plane
$\R^2$ or the hyperbolic plane ${\mathbb H}^2$. Let $P \subset
{\mathbb X}$ be a compact polygon with $k \ge 3$ vertices such that the
interior angle at vertex $i$ is of the form $\pi/m_i$ with $m_i \ge
2$. We call such a polygon $P$ a {\em Coxeter polygon}. Let $S = \{s_1,\dots,s_k\}$ be the
set of reflections along the sides of $P$ and $W$ be the group generated by the elements
of $S$. Then it is a well known fact due to Poincar{\'e} that $W$ is a discrete subgroup of the isometry group ${\rm Iso}(\mathbb X)$ with $P$ as its fundamental domain, i.e., the translates $\{ gP \mid g \in W \}$ form a tessellation of $\mathbb X$, which is a planar polygonal
complex in the above sense. We refer to it as the \emph{Coxeter complex} $C(W,S)$ and
call the polygon $P$ the {\em generating polygon} of the {\em Coxeter group} $(W,S)$.

\begin{dfn}[Building]
  Let ${\mathbb X} \in \{\R^2,{\mathbb H}^2\}$, $P \subset {\mathbb
    X}$ be a Coxeter polygon and $(W,S)$ be the associated Coxeter group. A ($2$-dimensional) {\em building of type
    $(W,S)$} is a polygonal complex $X=(V,E,F)$, together with a  set ${\mathcal A}$ of
  subcomplexes whose elements $\Sigma = (V_\Sigma,E_\Sigma,F_\Sigma)$
  are called {\em apartments}, with the following properties:
  \begin{itemize}
  \item [(B1)] For any two cells of $X$ there is an apartment containing     both of them.
  \item [(B2)] If $\Sigma_1$ and $\Sigma_2$ are two apartments
    containing two cells $c_1,c_2$ of $X$, then there exists a isometry $f: \Sigma_1 \to \Sigma_2$ which fixes
    $c_1$ and $c_2$ pointwise.
  \item [(B3)] Each apartment $\Sigma$ is isometric
    to the planar tessellation ${C(W,S)}$.
  \end{itemize}
  The building $X$ is called {\em Euclidean} if
  ${\mathbb X} = \R^2$ and {\em hyperbolic} if ${\mathbb X} = {\mathbb H}^2$. A building is
  called {\em thick} if every edge is contained in at least three chambers.
  A building which is not thick is called a {\em thin} building.
\end{dfn}

\begin{prop}\label{p:Buildings=PCPS}
Every $2$-dimensional Euclidean or hyperbolic building is a {\PCPS}, i.e., it satisfies the axioms (PCPS1), (PCPS2), (PCPS3).
\end{prop}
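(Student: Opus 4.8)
The plan is to verify the three axioms (PCPS1), (PCPS2), (PCPS3) of Definition~\ref{dfn:PCPS} one at a time, using the building axioms (B1), (B2), (B3) together with standard facts about Coxeter complexes. The underlying CW-structure is already a polygonal complex in the sense of Definition~\ref{dfn:polycomp}: (PC1) holds because all attaching maps are isometries (hence homeomorphisms), and (PC2) is part of the standard structure of buildings (it holds in each apartment, which is a Coxeter complex, and chambers in different apartments meet as they do in a common apartment by (B1)). So the substance is the apartment system.

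For (PCPS1), I would simply note that it is weaker than (B1): any two \emph{faces} are in particular two cells, so (B1) yields an apartment containing both. For (PCPS3), each apartment $\Sigma$ is by (B3) isometric to the Coxeter complex $C(W,S)$, which is a tessellation of $\mathbb{X} \in \{\R^2, \mathbb{H}^2\}$ by copies of the Coxeter polygon $P$; one checks it satisfies (T1)--(T4): (T1) because each side of $P$ has exactly two chamber-translates across it, (T2)--(T3) because the translates of a fundamental polygon tile $\mathbb X$ face-to-face, and (T4) because $P$ has finitely many vertices and $W$ acts with finite vertex stabilizers (the interior angle $\pi/m_i$ forces a finite cyclic stabilizer, giving finite vertex degree). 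Since $\mathbb X$ is homeomorphic to $\R^2$, $\Sigma$ is a planar tessellation. (If one also wants the spherical case it does not arise here, as Coxeter polygons live in $\R^2$ or $\mathbb H^2$.)

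The main work is (PCPS2): convexity of apartments with respect to galleries of faces. Let $\Sigma$ be an apartment and let $f_1, \dots, f_n$ be a finite gallery (a path of pairwise adjacent faces realizing the combinatorial distance) with $f_1, f_n \in F_\Sigma$. I would argue using the retraction $\rho = \rho_{\Sigma, f_1}\colon X \to \Sigma$ onto $\Sigma$ centered at the chamber $\overline{f_1}$, which is the standard tool here: $\rho$ is a chamber map that fixes $\Sigma$ pointwise, maps adjacent chambers to chambers that are equal or adjacent, and is \emph{distance-preserving from the base chamber}, i.e.\ $d(\rho(\overline{f_1}), \rho(\overline{g})) = d(\overline{f_1}, \overline{g})$ for every chamber $\overline{g}$ (this is the fundamental property of retractions in buildings; see, e.g., the references on buildings cited in Section~\ref{s:examples}). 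Applying $\rho$ to the gallery $f_1, \dots, f_n$ gives a sequence of faces in $\Sigma$ from $f_1$ to $\rho(f_n) = f_n$ in which successive terms are equal or adjacent; its length (after deleting repetitions) is at most $n$. On the other hand $\rho$ preserves distance from $f_1$, so $d(f_1, f_n)$ measured in $\Sigma$ equals $d(f_1, f_n) = n$ measured in $X$; hence no repetition can occur and $\rho$ maps the gallery isometrically onto a gallery in $\Sigma$. Because $\rho$ fixes $f_1$ and the image gallery starts at $f_1$ and has the same length, one concludes by an inductive uniqueness argument (a geodesic gallery is determined by its sequence of ``foldings'', which $\rho$ preserves) that $\rho(f_i) = f_i$ for every $i$, so each $f_i$ lies in $\Sigma$.

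The step I expect to be the main obstacle is making this last uniqueness argument airtight, i.e.\ showing that $\rho$ restricted to a geodesic gallery emanating from the base chamber is the identity. The clean way is: since $\rho$ is type-preserving, maps galleries to galleries (possibly stammering), and does not increase gallery distance from $\overline{f_1}$, a \emph{minimal} gallery from $\overline{f_1}$ is mapped to a gallery from $\overline{f_1}$ of the same (minimal) length, hence to a non-stammering minimal gallery; and a minimal gallery is mapped by $\rho$ to itself whenever it starts at the center, because $\rho$ agrees with the identity on the first chamber and each subsequent chamber is the unique $s_i$-adjacent chamber continuing the minimal gallery, a property determined inside any apartment containing an initial segment. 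I would cite the standard building-theory references (the same ones used for retractions elsewhere in the paper, e.g.\ \cite{GP}, \cite{BH}) for the existence and these properties of $\rho_{\Sigma,\overline{f_1}}$, so that the proof reduces to the bookkeeping above. This completes the verification of (PCPS1)--(PCPS3), establishing the proposition. $\square$
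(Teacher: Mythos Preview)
Your treatment of (PCPS1) and (PCPS3) is fine and more explicit than the paper's, which simply observes that the building and its apartments are polygonal complexes and planar tessellations once one forgets the metric. The substantive divergence is (PCPS2): the paper does \emph{not} prove convexity of apartments at all---it cites it as a known fact about buildings (see \cite[p.~164]{GP}, \cite[Cor.~5.54]{AB}, \cite[p.~59]{Ga}) and is done in one line. You instead attempt to re-derive convexity via the retraction $\rho_{\Sigma,\overline{f_1}}$.

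That attempt has a genuine gap. Your argument correctly shows that $\rho$ sends the minimal gallery $f_1,\dots,f_n$ to a non-stammering minimal gallery in $\Sigma$ of the same length, and this already gives $d_\Sigma(f_1,f_n)=d_X(f_1,f_n)$. But the step ``$\rho(f_i)=f_i$ for all $i$'' does not follow from what you have. Your justification, that ``a minimal gallery is mapped by $\rho$ to itself whenever it starts at the center, because \dots each subsequent chamber is the unique $s_i$-adjacent chamber continuing the minimal gallery,'' is false in a thick building: there are \emph{many} chambers across a given panel, and a minimal gallery from $\overline{f_1}$ that immediately leaves $\Sigma$ is mapped by $\rho$ to a \emph{different} gallery in $\Sigma$, not to itself. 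So the ``inductive uniqueness'' you invoke is precisely the convexity statement you are trying to prove. The standard proofs of apartment convexity close this gap with an extra ingredient beyond the distance-preserving property of $\rho$---for instance the gate/projection property of panels (the chamber across the final panel closest to $f_1$ is unique, forcing $f_{n-1}$ to coincide with the $\Sigma$-chamber across that panel), or a wall-crossing count in the Coxeter complex. If you want a self-contained argument, add one of these; otherwise, simply cite convexity as the paper does and drop the retraction discussion.
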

\begin{proof}
Disregarding the additional Euclidean or hyperbolic structure of the cells of a building, we can view it and its apartments as polygonal complexes in the sense of Definitions \ref{dfn:polycomp} and \ref{dfn:plantess}. Since the apartments of buildings are always convex (see \cite[p.~164, l.~-5]{GP} and also \cite[Corollary~5.54]{AB} or \cite[Proposition on p.~59]{Ga} for simplicial buildings), we see that every building is a {\PCPS}.
\end{proof}
\subsubsection{Euclidean buildings}\label{s:Euclidean buildings}
In this subsection we discuss how our theory applies to
 Euclidean buildings and give two specific examples.

As discussed above the Coxeter polygon $P$ has to be a $k$-gon whose interior angles are given by $\pi/m_{1},\ldots,\pi/m_{k}$ with integers $m_{1},\dots,m_{k}\ge2$ which have to satisfy
\begin{align*}
    (k-2)\pi=\frac{\pi}{m_{1}}+\ldots+\frac{\pi}{m_{k}}
\end{align*}
due to the Euclidean structure. This implies $k\leq 4$. As for $P$ being a triangle, $k=3$, one has either of the interior angles $\{
\frac{\pi}{3},\frac{\pi}{3},\frac{\pi}{3} \}$, $\{
\frac{\pi}{2},\frac{\pi}{4},\frac{\pi}{4} \}$ or $\{
\frac{\pi}{2},\frac{\pi}{3},\frac{\pi}{6} \}$.
Each of these choices leads to a unique Coxeter group and to a class of Euclidean buildings which are said to be of
type $\widetilde A_2$, $\widetilde C_2$ and $\widetilde G_2$, respectively, see \cite[Example~10.14]{AB}. For $k=4$ the only possibility for $P$ is to be the regular equilateral, the square.

By this discussion the following proposition can be checked immediately. We highlight it as it clarifies the applicability of the results of the previous sections to Euclidean buildings.

\begin{prop} \label{prop:curvEuclid}
For every $2$-dimensional Euclidian building, we have $\ka^{(\Sigma)}=0$, for every apartment $\Sigma$. Moreover, the sectional corner curvature $\ka_{c}^{(\Sigma)}$ is constantly zero on every apartment $\Sigma$ if and only if the Coxeter polygon is an equilateral triangle (type $\widetilde A_{2}$) or a square. Otherwise, some of the sectional corner curvatures are strictly positive.
\end{prop}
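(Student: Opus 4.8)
The plan is to compute both sectional curvatures directly from Definition~\ref{dfn:curv} using the fact that in a Euclidean building every apartment is isometric to the Coxeter complex $C(W,S)$, so that the combinatorial data $|f|_\Sigma$ and $|v|_\Sigma$ are completely determined by the generating Coxeter polygon $P$. Write $P$ as a $k$-gon with interior angles $\pi/m_1,\dots,\pi/m_k$, where $k \le 4$ and $(k-2)\pi = \sum_{i=1}^k \pi/m_i$ as recalled above. In the tessellation of $\R^2$ by copies of $P$, every face is a copy of $P$, hence $|f|_\Sigma = k$ for all faces; and the vertices of the tessellation fall into $k$ orbit types, where a vertex of type $i$ (coming from vertex $i$ of $P$) has exactly $2m_i$ faces around it, i.e.\ $|v|_\Sigma = 2m_i$. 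This is the standard local picture of a reflection tessellation and follows since the full angle $2\pi$ at such a vertex is partitioned into angles $\pi/m_i$.

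The first step is the face curvature. For any face $f$,
\begin{align*}
  \ka^{(\Sigma)}(f) = 1 - \frac{k}{2} + \sum_{i=1}^k \frac{1}{2m_i}
  = 1 - \frac{k}{2} + \frac{1}{2}\sum_{i=1}^k \frac{1}{m_i}
  = 1 - \frac{k}{2} + \frac{1}{2}(k-2) = 0,
\end{align*}
where in the third equality I used the angle-sum identity $\sum 1/m_i = k-2$. This proves $\ka^{(\Sigma)} \equiv 0$ for every apartment, regardless of the type.

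The second step is the corner curvature. At a corner $(v,f)$ where $v$ is a vertex of type $i$, we have $|v|_\Sigma = 2m_i$ and $|f|_\Sigma = k$, so
\begin{align*}
  \ka_c^{(\Sigma)}(v,f) = \frac{1}{2m_i} - \frac{1}{2} + \frac{1}{k}.
\end{align*}
This vanishes for all corners if and only if $\frac{1}{2m_i} + \frac{1}{k} = \frac12$ for every $i \in \{1,\dots,k\}$, i.e.\ $m_i = \frac{k}{k-2}$ is the same for all $i$. I then go through the finitely many admissible polygons listed in the excerpt: for $k=3$ this forces $m_1=m_2=m_3=3$, the equilateral triangle (type $\widetilde A_2$); for $k=4$ the angle-sum identity forces $m_1=\dots=m_4=2$, the square. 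In the remaining triangle cases $\{2,4,4\}$ and $\{2,3,6\}$ the values $m_i$ are not constant, so there is a corner with $m_i$ strictly smaller than $\frac{k}{k-2}=3$, namely the right-angle vertex with $m_i=2$, giving $\ka_c^{(\Sigma)}(v,f) = \frac14 - \frac12 + \frac13 = \frac{1}{12} > 0$; hence some corner curvatures are strictly positive. This exhausts all cases and proves the equivalence.

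The routine part is the bookkeeping of cases; the only genuine content is identifying $|v|_\Sigma = 2m_i$ and $|f|_\Sigma = k$ from the reflection-tessellation structure, which I expect to be the main point to state carefully — it rests on the fact that the reflections in the sides of $P$ generate the full local dihedral picture at each vertex, so that the angle $2\pi$ around a vertex of type $i$ is tiled by $2m_i$ copies of an angle $\pi/m_i$. Everything else is the two displayed computations above together with the constraint $\sum 1/m_i = k-2$, which I may assume from the preceding paragraph of the excerpt.
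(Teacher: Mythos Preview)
Your proof is correct and follows essentially the same approach as the paper's: the paper's proof simply notes that the vertex degrees in each apartment are $2m_1,\dots,2m_k$ (so that the angles sum to $2\pi$) and then says ``this gives the result by direct calculation,'' which is exactly the calculation you spell out. You are just more explicit in carrying out the face- and corner-curvature computations and the case analysis.
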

\begin{proof}For the Coxeter polygon $P$ with interior angles are given by $\pi/m_{1}, \ldots,$ $\pi/m_{k}$, the vertex degrees in the apartments of corresponding buildings have to be $2m_{1},\ldots,2m_{k}$ in order to sum up to $2\pi$ about each vertex. This gives the result by  direct calculation.
\end{proof}

Let us stress that even though there are only three types of Euclidean triangles as Coxeter polygons, a classification of all buildings of one of these types is impossible because of their abundance (see \cite[p. 157]{Ro}).

Next we focus on two examples in more detail. First we revisit Example~\ref{ex:vdov} in Subsection \ref{s:PCPS} in more detail.

 \medskip

\noindent\textbf{Example
\ref{ex:vdov} (revisited)} This example   is a thick  Euclidean building based on an equilateral Euclidean triangle. Thus, it is   of type $\widetilde A_2$ and has, therefore, zero sectional corner curvature.

To get a better understanding of this building, it is worth looking at  the links of its vertices. It can be checked, that these links are all isomorphic to the Heawood graph which is a generalized 3-gon.
\medskip

Next, we consider a natural class of Euclidean buildings based on a square.

\begin{example}[Product of trees] \label{ex:prodtrees} Let $r,s \ge 2$
  and $T_r$ and $T_s$ be infinite regular metric trees of vertex
  degrees $r$ and $s$, respectively. All edge lengths are chosen to be
  $1$. We can think of one of the trees, say $T_r$, to be {\em
    horizontal} and the other one to be {\em vertical}. Then the
  product $T_r \times T_s$ carries a natural structure of a thick
  Euclidean building $X=(V,E,F)$ with $P = [0,1]^2 \subset {\mathbb
    R}^2$. The set $V$ consists of all pairs $(x,y)$ where $x$ and $y$
  are vertices in $T_r$ and $T_s$ respectively. Two vertices
  $(x_1,y_1), (x_2,y_2) \in V$ are connected by an edge in $E$, if
  either ($x_1 = x_2$ and $y_1 \sim_{T_s} y_2$) or ($y_1 = y_2$ and
  $x_1 \sim_{T_r} x_2$). In the first case we call the edge in $E$
  horizontal and in the second case we call the edge in $E$
  vertical. The chambers are the unit squares with boundary vertices
  $(x_1,y_1), (x_1,y_2), (x_2,y_1), (x_2,y_2)$ for any choice $x_1
  \sim_{T_r} x_2$ and $y_1 \sim_{T_s} y_2$. All vertices in $T_r
  \times T_s$ have degree $r+s$ (with $r$ emanating horizontal and $s$
  emanating vertices edges). Moreover, a vertical edge is contained in
  precisely $r$ chambers while a horizontal edge is contained in
  precisely $s$ chambers.

  Two bi-infinite combinatorial geodesics $g_1
  \subset T_r$ and $g_2 \subset T_s$ can be viewed as infinite regular
  trees of vertex degrees $2$.
  The corresponding subcomplex $\Sigma
  = \Sigma_{g_1,g_2} = g_1 \times g_2$ is isomorphic to a regular
  tessellation of $\R^2$ by unit squares. We choose ${\mathcal A}$ to be the set of all those subcomplexes.

  From  the proposition above we learn that the sectional corner curvatures are constantly zero, i. e., $\ka^{(\Sigma)}_{c}=0$ for every apartment.

  Another interesting fact about these buildings is that the link of every vertex in $T_r \times T_2$ is the complete   bipartite graph $K_{r,s}$.
\end{example}

\subsubsection{Hyperbolic buildings}
Finally, let us consider some examples of hyperbolic buildings.

In the hyperbolic case, the Coxeter polygon $P$ has to be a $k$-gon whose interior angles $\pi/m_{1},\ldots,\pi/m_{k}$ with integers $m_{1},\dots,m_{k}\ge2$ have to satisfy
\begin{align*}
    (k-2)\pi>\frac{\pi}{m_{1}}+\ldots+\frac{\pi}{m_{k}}
\end{align*}
due to the hyperbolic structure.

This gives the following immediate consequence.

\begin{prop}For every $2$-dimensional hyperbolic building, we have $\ka^{(\Sigma)}<0$, for every apartment $\Sigma$. Moreover, the sectional corner curvature satisfies $\ka_{c}^{(\Sigma)}<0$  if the Coxeter polygon is a regular hyperbolic polygon.
\end{prop}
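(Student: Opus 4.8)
The plan is to mimic the proof of Proposition~\ref{prop:curvEuclid}, reducing both claims to the combinatorics of the Coxeter complex $C(W,S)$ together with the hyperbolic angle inequality $(k-2)\pi > \pi/m_1 + \dots + \pi/m_k$. First I would record the relevant data of an apartment. By axiom (B3) every apartment $\Sigma$ is isometric to $C(W,S)$, the tessellation of ${\mathbb H}^2$ by copies of the Coxeter $k$-gon $P$ with interior angles $\pi/m_1,\dots,\pi/m_k$; hence ${|f|}_{\Sigma} = |\partial f| = k$ for every $f \in F_\Sigma$. Around the vertex of type $i$ the two reflections in the sides of $P$ meeting there generate a dihedral group of order $2m_i$ (their mirrors enclose the angle $\pi/m_i$), so exactly $2m_i$ chambers meet at that vertex and it has degree $|v|_\Sigma = 2m_i$ in the planar tessellation $\Sigma$. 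Since each chamber is a copy of $P$, its closure contains exactly one vertex of each type $1,\dots,k$.

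For the face curvature I would then just substitute into Definition~\ref{dfn:curv}:
\[
  \ka^{(\Sigma)}(f) = 1 - \frac{{|f|}_{\Sigma}}{2} + \sum_{v \in V_{\Sigma},\, v\in\overline{f}} \frac{1}{|v|_\Sigma} = 1 - \frac{k}{2} + \sum_{i=1}^k \frac{1}{2m_i}.
\]
The hyperbolic inequality gives $\sum_{i=1}^k 1/m_i < k-2$, hence $\ka^{(\Sigma)}(f) < 1 - k/2 + (k-2)/2 = 0$; as already noted in Subsection~\ref{s:PCPS}, this value does not depend on the chosen apartment containing $f$.

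For the second claim, if $P$ is a regular hyperbolic $k$-gon then $m_1 = \dots = m_k =: m$, every vertex of $\Sigma$ has degree $2m$, and
\[
  \ka_{c}^{(\Sigma)}(v,f) = \frac{1}{|v|_\Sigma} - \frac12 + \frac{1}{{|f|}_{\Sigma}} = \frac{1}{2m} - \frac12 + \frac1k .
\]
Now the hyperbolic inequality reads $(k-2)\pi > k\pi/m$, i.e. $1/m < (k-2)/k$, which rearranges to $1/(2m) + 1/k < 1/2$, i.e. $\ka_{c}^{(\Sigma)}(v,f) < 0$ for every corner $(v,f) \in C_\Sigma$. I do not expect a real obstacle here; the only point requiring a little care is the identification of the vertex degrees in $\Sigma$ with the numbers $2m_i$ (equivalently, of the link of a vertex of $C(W,S)$ with the appropriate cycle), but this is exactly the standard description of planar Coxeter complexes that is already used implicitly in Proposition~\ref{prop:curvEuclid}.
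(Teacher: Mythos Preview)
Your proposal is correct and takes essentially the same approach as the paper: identify the vertex degrees in $\Sigma$ as $2m_i$ and then plug the hyperbolic inequality $(k-2)\pi > \sum_i \pi/m_i$ into Definition~\ref{dfn:curv}. The paper's proof is just the one-line version (``direct calculation''); you have spelled out the substitutions and the rearrangement $1/m < (k-2)/k \Rightarrow 1/(2m)+1/k<1/2$ explicitly, which is fine.
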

\begin{proof}Again, the vertex degrees in the apartments of corresponding buildings have to be $2m_{1},\ldots,2m_{k}$ in order to sum up to $2\pi$ about each vertex. This gives the result by  direct calculation using the discussion above.
\end{proof}

Note that while all hyperbolic buildings have negative sectional face
curvature they do not always have also non-positive sectional corner
curvature: consider a tessellation of the hyperbolic plane by
triangles with interior angles
$\frac{\pi}{r},\frac{\pi}{s},\frac{\pi}{t}$ with $r,s,t \ge 2$ and
$ \frac{1}{r} + \frac{1}{s} + \frac{1}{t} < 1 $ (which has to be satisfied as the sum over the angles of a hyperbolic triangle has to be less than $\pi$).
This tessellation is a thin hyperbolic building and it has
non-positive corner curvature if and only if $r,s,t \ge
3$.

Henceforth, we only consider hyperbolic buildings with {\em regular   polygons} as faces. These hyperbolic buildings have always negative sectional corner curvature by the above proposition.

Below, we briefly outline three examples of hyperbolic buildings and refer the interested readers to the corresponding references.

We start with hyperbolic buildings whose faces are right-angled polygons.

\begin{example}[``Bourdon buildings''] \label{ex:bourdon} Let $p \ge
  5$ and $q \ge 3$. Then there is a {\em unique} hyperbolic building
  $X_{p,q}$ with the following properties (see \cite{Bou}): All
  chambers are regular right-angled hyperbolic $p$-gons and the link
  $L(v)$ of every vertex is the complete bipartite graph
  $K_{q,q}$. Since every edge of $X_{p,q}$ lies in $q$ chambers,
  $X_{p,q}$ is a thick building. Moreover, $X_{p,q}$ has constant negative  sectional corner
  curvature $\kappa_c^{(\Sigma)} = 1/p - 1/4 < 0$.
\end{example}
\bigskip

Next, we mention a general method to obtain hyperbolic buildings admitting a cocompact group action. First, we choose finitely
many hyperbolic polygons, label their oriented edges and identify edges with the same labels (these edges must obviously have the same length). We call such a compact polygonal complex a {\em polyhedron}. Then its universal covering is again a polygonal complex (admitting a cocompact group action with this polyhedron as its quotient) and the links of its vertices provide useful
information in the decision whether it is a building (see,  e.g., \cite{GP}).

Next, we give an example which uses this construction.

\begin{example}[see \cite{Vd,KVd}] \label{ex:vdov2} Let ${K}$
  be a {\em polygonal presentation} associated to the disjoint
  connected bipartite graphs $G_1,\dots,G_n$ in the sense of
  \cite[Definition~1.2]{KVd}. Assume that all $G_i$ are copies of the same
  generalized $m$-gon. Every cyclic $p$-tuple in ${K}$ provides a clockwise labeling of the oriented edges of
  a regular hyperbolic $p$-gon with angles $\frac{\pi}{m}$. If $mp >
  2m+p$ then the universal covering of the polyhedron corresponding to
  ${K}$ is a hyperbolic building, see \cite[p. 472]{Vd}. It has constant sectional corner curvature $\ka_{c}^{(\Sigma)}=(2m+p-mp)/(2mp)<0$.
   This approach provides
  examples of hyperbolic buildings with $p$-sided chambers for
  arbitrary $p \ge 3$ {\em with a cocompact group action}. \\In particular, the
  triangle presentations given in \cite{KVd} lead to explicit
  hyperbolic buildings with regular triangles as faces.
\end{example}

Finally, techniques of Haglund \cite{Hag} provide us with the following result.

\begin{example}[see {\cite[Thme. 3.6]{GP}}] \label{ex:hag} Let $P \subset {\mathbb H}^2$ be a regular hyperbolic polygon with angles $\frac{\pi}{m}$, $m \ge 3$ and an even number of sides. Let $(W,S)$ be   the associated Coxeter group. Let $L$ be an algebraic generalized   $m$-gon over a field with large enough cardinality. (The term     ``algebraic'' refers to the fact that the $m$-gon is based on a     Chevalley quadruple, see \cite[Definition~3.3]{GP}.) Then there are {\em     uncountably many} hyperbolic buildings of type $(W,S)$ with faces   isometric to $P$ such that all links are isomorphic to  $L$.
\end{example}


\subsection{Maximal apartment systems in buildings}

Since any union of apartment systems of a building $X=(V,E,F)$ forms again an apartment system (see \cite[Thm. 4.54]{AB}) for a proof in the case of simplicial buildings), there exists a unique maximal system of apartments by Zorn's lemma. In the proof of positive Cheeger constant as a consequence of negative curvature (Theorem~\ref{t:cheeger1}), we used the stronger axiom (${\rm PCPS1}^*$) instead of (PCPS1). Below we show that, for a building with maximal apartment system, (${\rm PCPS1}^*$) is satisfied. We give the full reference for the simplicial case and we believe that the result remains true in the polygonal case as well. The proof was indicated to us by Shahar Mozes.

\begin{thm} \label{thm:maxbuildPCPS}
Every locally finite $2$-dimensional Euclidean or hyperbolic building with a maximal apartment system   satisfies the axioms (${\it PCPS1}^*$), (PCPS2), (PCPS3).
\end{thm}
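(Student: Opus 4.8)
The plan is to verify the three axioms (${\rm PCPS1}^*$), (PCPS2), (PCPS3) one by one for a locally finite $2$-dimensional Euclidean or hyperbolic building $X$ equipped with its maximal apartment system ${\mathcal A}_{\max}$. The axioms (PCPS2) and (PCPS3) are the easy part: they hold for \emph{any} apartment system by Proposition~\ref{p:Buildings=PCPS}, since apartments of buildings are always convex and are, by (B3), isometric to the Coxeter complex $C(W,S)$, hence planar tessellations. So the entire content of the theorem is the verification of (${\rm PCPS1}^*$): every one-sided infinite geodesic $(f_0,f_1,f_2,\dots)$ of faces lies in a single apartment of ${\mathcal A}_{\max}$.

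First I would reduce the statement about galleries of faces to a statement about the chamber system of the building in the usual sense. A geodesic of faces in our combinatorial sense corresponds to a minimal gallery in the building's chamber graph; the building axioms (especially the existence of retractions onto apartments and the fact that minimal galleries are preserved by retractions) give strong control. The key classical input is that in a building with the \emph{complete} (maximal) apartment system, any subset of chambers that is ``convex'' in the appropriate sense — in particular any bi-infinite or one-sided infinite minimal gallery, and more generally any minimal gallery together with its combinatorial convex hull — is contained in an apartment. For finite galleries this is standard (two chambers always lie in a common apartment, and a minimal gallery between them stays in it by convexity). The new point for infinite rays is a compactness/exhaustion argument: write the ray as an increasing union of finite geodesic segments $\gamma_n = (f_0,\dots,f_n)$, each contained in some apartment $\Sigma_n \in {\mathcal A}_{\max}$; one then has to produce a single apartment containing all of them.

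The main obstacle is precisely this last limiting step, because a priori the apartments $\Sigma_n$ need not be nested and there is no obvious compactness in a non-locally-finite setting — which is exactly why the hypothesis of local finiteness is imposed. Under local finiteness, the plan is: fix the first face $f_0$ and use that only finitely many apartments of the (locally finite) building pass through any fixed bounded region, or alternatively use König's lemma on the tree of ``partial apartment data'' compatible with longer and longer initial segments of the ray, to extract a nested cofinal subsequence or a coherent limit chart. Concretely, one fixes a chamber $C_0$ in $\overline{f_0}$, considers the (finite, by local finiteness) set of germs of apartments at $C_0$, and for each $n$ records which germ $\Sigma_n$ induces; by pigeonhole one germ occurs for infinitely many $n$, and the retraction centered at $C_0$ onto the corresponding model apartment $\Sigma_\infty$ maps the whole ray isometrically (since it maps each $\gamma_n$ isometrically and these exhaust the ray), so the ray's image is a geodesic ray in the model; pulling this back, the ray lies in $\Sigma_\infty \in {\mathcal A}_{\max}$. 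Filling in the details of why a retraction that is isometric on every finite initial segment forces the image ray to lie in (a sub-apartment isomorphic to) the model, and why the preimage is genuinely an apartment of the maximal system, is the technical heart; for the simplicial case this is exactly the cited literature (\cite{AB}, \cite{Ga}), and the remark that the polygonal case ``remains true as well'' is the honest statement of what is and is not being proved in full detail here.
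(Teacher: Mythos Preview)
Your overall architecture is correct and matches the paper: (PCPS2) and (PCPS3) come for free from Proposition~\ref{p:Buildings=PCPS}, and the content is entirely in (${\rm PCPS1}^*$); you correctly cover each finite initial segment $\gamma_n=(f_0,\dots,f_n)$ by an apartment $\Sigma_n$ (using (B1) and convexity) and identify the remaining problem as passing to a limit. Your K\"onig's lemma suggestion on ``partial apartment data'' is in fact exactly the paper's argument in slightly different language.

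However, your ``concretely'' elaboration via retractions has a genuine gap. Knowing that the retraction $\rho_{\Sigma_\infty,f_0}$ maps the ray isometrically onto a geodesic ray in $\Sigma_\infty$ is true for \emph{any} apartment $\Sigma_\infty$ containing $f_0$ (retractions centered at a chamber preserve minimal galleries starting there); it does \emph{not} say the ray lies in $\Sigma_\infty$, nor does it produce a new apartment containing the ray. ``Pulling this back'' is exactly the step that fails: the retraction is many-to-one off $\Sigma_\infty$, and having the same germ at $f_0$ as infinitely many $\Sigma_n$ gives you no global control. So the retraction line, as written, does not reach the conclusion.

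The paper implements the limiting step you gestured at with K\"onig's lemma by a straightforward compactness argument. One puts a metric on the set $\mathcal A_0$ of apartments through $f_0$ by $\delta(\Sigma_1,\Sigma_2)=1/\max\{r:\Sigma_1\cap B_r(f_0)=\Sigma_2\cap B_r(f_0)\}$. Local finiteness makes $\{\Sigma\cap B_r(f_0):\Sigma\in\mathcal A_0\}$ finite for each $r$, hence $(\mathcal A_0,\delta)$ is totally bounded. For completeness, a Cauchy sequence stabilises on every ball, and the union of these stable pieces is a subcomplex isometric to $C(W,S)$; the point you were missing is that this is precisely where \emph{maximality} enters: by \cite[Proposition~4.59]{AB} the maximal apartment system contains every subcomplex isometric to the Coxeter complex, so the limit is again an apartment in $\mathcal A_0$. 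Then the sequence $(\Sigma_n)$ has a convergent subsequence whose limit apartment contains every $f_j$. Your retraction idea never invokes this characterisation of the maximal system, which is why it cannot close.
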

\begin{proof}  By Proposition~\ref{p:Buildings=PCPS} we only have to show   (PCPS1$^*$), that is, every one-sided infinite geodesic is included   in an apartment. Consider a one-sided infinite geodesic $(f_j)_{j \ge 0}$ of faces. Define $\mathcal{A}_{0}$ to be the set of all apartments that contain $f_{0}$. Define a metric $\delta$ on $\mathcal{A}_{0}$ viz
\begin{align*}
    \delta(\Sigma_{1},\Sigma_{2})=1/\max\{r\in\N\mid \Sigma_{1}\cap B_{r}(f_{0})=\Sigma_{2}\cap B_{r}(f_{0})\}
\end{align*}
for $\Sigma_{1}\neq\Sigma_{2}$ and $0$, otherwise. We show that the metric space $(\mathcal{A}_{0},\delta)$ is compact by showing that is totally bounded and complete. Note that total boundedness of $(\mathcal{A}_{0},\delta)$, (i. e., the metric space can be covered by finitely many $\eps$ balls for every $\eps>0$) follows from local finiteness, as local finiteness  implies the set $\{\Sigma\cap B_{r}(f_{0})\mid \Sigma\in\mathcal{A}_{0}\}$ is finite for all $r$. In order to see completeness, we let $(\Sigma_{n})$ be a Cauchy sequence in $\mathcal{A}_{0}$ and observe that, for a given $r$, there is $N$ such that $b_{r}=\Sigma_{n}\cap B_{r}(f_{0})$ are constant for $n\ge N$. One can check that $\Sigma=\bigcup_{r\ge1} b_{r}$ is isometric to the Coxeter complex $C(W,S)$ and, thus, $\Sigma$ is contained in the system of maximal apartments by \cite[Proposition~4.59]{AB}. Hence, $\Sigma\in \mathcal{A}_{0}$ and, thus, $\Sigma $ is a limit of $(\Sigma_{n})$ in $\mathcal{A}_{0}$. Hence, $(\mathcal{A}_{0},\delta)$ is totally bounded and complete and, thus, compact. Now, let $\Sigma_{n}\in\mathcal{A}_{0}$ be an apartment that contains $f_{n}$ and, by convexity of the apartments, $f_{0},\ldots,f_{n}\in \Sigma_{n}$. By compactness, there is a convergent subsequence with limit $\Sigma\in \mathcal{A}_{0}$ which therefore contains the faces of the geodesic $(f_{j})_{j\ge0}$.
\end{proof}

Let us close this section by a  one-dimensional example that shows that the choice of the apartment system is not unique.
Analogues in higher dimensions are easy to find.

\begin{example} Let $T_r = (V,E)$ be a regular metric tree of edge
  length $1$ and vertex degree $r \ge 3$, and let $\phi: E \to \{
  1,2,\dots,r \}$ be a labeling of the edges such that the $r$ edges
  emanating from every vertex carry pairwise different labels. Let
  ${\mathcal A}$ be the set of bi-infinite paths $( f_k )$ such
  that the bi-infinite sequence $x_k = \phi(f_k)$ has no doublings
  (i.e., $x_k \neq x_{k+1}$ for all $k \in \Z$) and is periodic (i.e.,
  there exists $t \ge 1$ such that $x_{k+t} = x_k$ for all $k \in   \Z$). Then it is easy to see that $T_r$ together with ${\mathcal A}$
  as its system of apartments forms a one-dimensional Euclidean
  building. Another choice ${\mathcal A}'$ of an apartment system is
  the set of all bi-infinite paths without doublings in the above sense, which is the maximal apartment system. It is obvious that ${\mathcal A}'$ is a
  strictly bigger apartment system than ${\mathcal A}$.
\end{example}


\end{document}